\newlist{symbols}{itemize}{1}
\setlist[symbols,1]{label=,labelwidth=1.5in,align=parleft,itemsep=0.1\baselineskip,leftmargin=!}
\newtheorem{theorem}{Theorem}[section]
\newtheorem{cor}[theorem]{Corollary}
\newtheorem{lem}[theorem]{Lemma}
\newtheorem{prop}[theorem]{Proposition}
\newtheorem{ques}[theorem]{Question}
\newtheorem{conj}[theorem]{Conjecture}
\newtheorem{exam}[theorem]{Example}
\newtheorem{obs}[theorem]{Observation}
\theoremstyle{definition}
\theoremstyle{remark}
\newtheorem{rem}[theorem]{Remark}
\renewcommand{\phi}{\varphi}
\newcommand{\ord}{{\rm ord}}
\newcommand{\Mod}[1]{\ \mathrm{mod}\ #1}
\title{$M$-ary partition polynomials}
\author{B\l{}a\.zej \.Zmija}
\address{Institute of Mathematics of the Polish Academy of Sciences, ul. \'{S}niadeckich 8, 00-656 Warszawa, Poland} 
\email{blazej.zmija@gmail.com}
\keywords{$M$-ary partitions, congruences, generating functions}
\subjclass[2020]{11P83, 11P81, 05A15, 05A17}
\date{\today}
\begin{document}

\begin{abstract}
Let $M=(m_{i})_{i=0}^{\infty}$ be a sequence of integers such that $m_{0}=1$ and $m_{i}\geq 2$ for $i\geq 1$. In this paper we study $M$-ary partition polynomials $(p_{M}(n,t))_{n=0}^{\infty}$ defined as the coefficient in the following power series expansion:
\begin{align*}
\prod_{i=0}^{\infty}\frac{1}{1-tq^{M_{i}}} = \sum_{n=0}^{\infty} p_{M}(n,t)q^{n},
\end{align*}
where $M_{i}=\prod_{j=0}^{i}m_{j}$. In particular, we provide a detailed description of their rational roots and show, that all their complex roots have absolute values not greater than $2$. We also study arithmetic properties of $M$-ary partition polynomials. One of our main results says that if $n=a_{0}+a_{1}M_{1}+\cdots +a_{k}M_{k}$ is a (unique) representation such that $a_{j}\in\{0,\ldots ,m_{j+1}-1\}$ for every $j$, then
\begin{align*}
p_{M}(n,t)\equiv t^{a_{0}}\prod t^{a_{j}}f(a_{j}+1,t^{m_{j}-1}) \pmod{g_{k}(t)},
\end{align*}
where $f(a,t):=\frac{t^{a}-1}{t-1}$ and $g_{k}(t):=\gcd \big(t^{m_{1}+m_{2}-1}f(m_{2},t^{m_{1}-1}),\ldots ,t^{m_{k}+m_{k+1}-1}f(m_{k+1},t^{m_{k}-1})\big)$. This is a polynomial generalisation of the well-known characterisation modulo $m$ of the sequence of $m$-ary partition.
\end{abstract}

\maketitle

\section{Introduction}

If $m$ is a positive integer, then by an $m$-ary partition of $n$ we understand any expression of the form
\begin{align*}
n=m^{i_{1}} + m^{i_{2}} + \cdots + m^{i_{k}},
\end{align*}
where $i_{1}\leq i_{2}\leq \ldots \leq i_{k}$ are nonnegative integers. We will denote the number of such partitions of  $n$ by $b_{m}(n)$.

The first person to study $m$-ary partitions was probably Euler, who found the generating function for the sequence of binary partitions, that is, the case of $m=2$. Later, the sequence $(b_{m}(n))_{n=0}^{\infty}$ was studied by many mathematicians, and for example Mahler \cite{Mah}, and then Pennington \cite{Penn}, described its the asymptotic behaviour, Churchhouse started studying its arithmetic properties in the case of $m=2$ in \cite{Ch}, and this was continued for more general $m$ by Gupta \cite{Gup}, Rødseth \cite{R}, Rødseth and Sellers \cite{RS}, and many more.

The notion of $m$-ary partitions was generalised by many authors and many directions, see for example \cite{ABRS,And2,CS,GS,Hou,LuM,MaLu,BZ4}. For us, the two types of generalisations will be important. The first one, done by Folsom et al. in \cite{Fol} is the following. Let $M=(m_{n})_{n=0}^{\infty}$ be a fixed sequence of natural numbers such that $m_{0}=1$ and $m_{j}\geq 2$ for $j\geq 1$. For every $i\geq 0$ let $M_{i}:=\prod_{j=0}^{i}m_{j}$. Then we will call every representation of the form
\begin{align*}
n=M_{i_{1}}+M_{i_{2}}+\cdots +M_{i_{k}}
\end{align*}
an $M$-ary partition of $n$. In the paper, we will denote by $b_{M}(n)$ the number of $M$-ary partitions of $n$.

Note, that Folsom et al. called such representations $M$-sequence non-squashing partitions and were focused on their asymptotic behaviour. They also stated some conjectures concerning the divisibility of numbers $b_{M}(n)$ by the terms in the sequence $M$. These conjectures were however not true as was showed in \cite{BZCong}.

On the other way Ulas and Żmija introduced in \cite{BZ2} a polynomial generalisation of binary partitions, that is, binary partition polynomials, defined as the coefficients in the following power series expansion:
\begin{align*}
F_{2}(q,t):=\prod_{j=0}^{\infty}\frac{1}{1-tq^{2^{j}}}=\sum_{n=0}^{\infty}p_{2}(n,t)q^{n}.
\end{align*}
They used the polynomials to obtain some non-trivial identities involving binary partitions. For example, they proved that for every $n$, the number of binary partitions of $8n+4$ with the number of parts $\equiv 2k \pmod{4}$ is equal to the number of binary partitions of $8n+4$ with the number of parts $\equiv 2k+1 \pmod{4}$  for $k\in\{0,1\}$. They also used binary partition polynomials to study the sequence of so-called $s$-partitions. These are partitions into parts of the form $2^{k}-1$, $k\in\mathbb{N}_{0}$. Note here, that $s$-partitions were introduced by Bhatt in \cite{Bhatt}, who showed their application in cryptography.

In this paper, we study properties of (probably) the simplest non-trivial case of polynomials related to partitions, that is, $M$-ary partition polynomials. Here we are interested exclusively in the case of partitions with parts from an infinite set. This corresponds to the case of $M$ being infinite. However, many of the results from this chapter remains true also if $M$ is finite.

Let $M=(m_{n})_{n=0}^{\infty}$ be a fixed sequence of natural numbers such that $m_{0}=1$ and $m_{j}\geq 2$ for $j\geq 1$. Let $M_{n}:=\prod_{i=0}^{n}m_{i}$. For a number $n$ we define the $n$th $M$-ary partition polynomial $p_{M}(n,t)$ as the $n$th coefficient in the following power series expansion:
\begin{align*}
F_{M}(q,t):=\prod_{j=0}^{\infty}\frac{1}{1-tq^{M_{j}}}=\sum_{n=0}^{\infty}p_{M}(n,t)q^{n}.
\end{align*}
The coefficients of the polynomial $p_{M}(n,t)$ have the following combinatorial interpretation. If
\begin{align*}
p_{M}(n,t)=\sum_{j=0}^{n}a_{M}(j,n)t^{j},
\end{align*}
then $a_{M}(j,n)$ is equal to the number of representations of $n$ as a sum of the numbers $M_{i}$ with exactly $j$ parts. In particular, $p_{M}(n,1)=b_{M}(n)$. Note, that the case of binary partition polynomials is obtained by putting $M=(1,2,2,\ldots )$. % Every representation of the form $n=M_{i_{1}}+M_{i_{2}}+\cdots +M_{i_{k}}$ will be called an $M$-ary partition of $n$. The fact that $\deg p_{M}(n,t)=n$ will be proved in Observation \ref{ObsDegree} below.

Let us describe briefly the content of this paper. In the next section we will describe the basic properties of $M$-ary partition polynomials and show some examples. Then we will focus on their rational roots in Section \ref{SectionRationalRoots} and complex roots in Section \ref{SectionComplexRoots}. In these part we will mainly interested in the case of $m$-ary partition polynomials, that is, the case of $M=(1,m,m,\ldots )$. In particular, we will obtain a very precise description of all the possible rational roots and that all the complex roots are in the circle of radius $2$ and the center in the origin. In Section \ref{SectionCertainPropertiesModulo} we will return to the general case and prove a polynomial generalisation of the well-known characterisation of the sequence $(b_{m}(n))_{n=0}^{\infty}$ modulo $m$. Moreover, our result works for any sequence $M$, not only $M=(1,m,m,\ldots )$. In the last section we will study two sequences of the coefficients of $M$-ary partition polynomials. In particular, we will show that both these sequences are bounded, and connect the values of one of them with the numbers of partitions into parts of the form $M_{i}-1$.

In the paper, we will sometimes use the following notion of automaticity. Let $k\in\mathbb{N}$ and $k\geq 2$. We say that a sequence $\mathbf{a}=(a_{n})_{n=0}^{\infty}$ is $k$-automatic if the following set
\begin{align*}
\mathcal{K}_{k}(\mathbf{a}):=\big\{\ (a_{k^{i}n+j})_{n=0}^{\infty} \ \big|\ i\in\mathbb{N}_{0} \textrm{ and } 0\leq j<k^{i} \ \big\},
\end{align*}
called the $k$-kernel of $\mathbf{a}$, is finite. This condition is equivalent to the condition that there exists a $k$-automaton for the sequence $\mathbf{a}$. Roughly speaking, $k$-automaton (or automaton for short if $k$ is fixed) is a procedure that allows us, for a given $n$, find the value of $a_{n}$ using only the representation of $n$ in base $k$. For more precise definition see \cite{AS}.

One of the most important results in the theory of automatic sequences is the following characterisation in the case of $k$ being a power of a prime.

\begin{lem}[Christol's Theorem]\label{ThmChristol}
Let $q$ be a power of a prime. Let $\mathbf{a}=(a_{n})_{n=0}^{\infty}$ be a sequence over $\mathbb{F}_{q}$. Then $\mathbf{a}$ is $q$-automatic if and only if its generating function is algebraic over $\mathbb{F}_{q}(x)$.
\end{lem}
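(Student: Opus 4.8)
The statement is the classical theorem of Christol (in the form of Christol--Kamae--Mend\`es France--Rauzy), and since the body of the paper uses it only as a black box, the honest plan is simply to invoke \cite{AS}. If I had to prove it, the route I would take goes through the \emph{section} (Cartier) operators. For $0\le r<q$ I would define $\La_{r}\colon \F_{q}[[x]]\to\F_{q}[[x]]$ by $\La_{r}\big(\sum_{n}a_{n}x^{n}\big)=\sum_{n}a_{qn+r}x^{n}$. The whole point of this definition is that the members of the $q$-kernel of $\mathbf a$ are exactly the series $\La_{r_{i}}\cdots\La_{r_{1}}f$ obtained by iterating the $\La_{r}$ on the generating function $f=\sum_{n}a_{n}x^{n}$; thus $\mathbf a$ is $q$-automatic precisely when only finitely many such series occur. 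I would first record two elementary facts valid over $\F_{q}$: since every coefficient satisfies $a^{q}=a$, one has $g(x)^{q}=g(x^{q})$, which yields both the reconstruction identity $f=\sum_{r=0}^{q-1}x^{r}(\La_{r}f)^{q}$ and the semilinearity relation $\La_{r}(g^{q}h)=g\,\La_{r}(h)$. A third, purely field-theoretic, ingredient is that for a field $K=\F_{q}(x)$ of characteristic $p$, an element $f$ is algebraic over $K$ if and only if the Frobenius iterates $f,f^{q},f^{q^{2}},\dots$ are linearly dependent over $K$ (one direction is that a dependence is a nonzero polynomial relation, the other that $d+1$ of these iterates all lie in a degree-$d$ extension).

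For the implication \emph{automatic $\Rightarrow$ algebraic} I would let $\{g_{1},\dots,g_{d}\}$ be the finite kernel, with $f=g_{1}$, a set closed under every $\La_{r}$. Applying the reconstruction identity to each $g_{i}$ writes $g_{i}=\sum_{r=0}^{q-1}x^{r}g_{\sigma(i,r)}^{q}$ for suitable indices $\sigma(i,r)$, i.e.\ the column vector $(g_{1},\dots,g_{d})^{\mathsf T}$ equals $A\,(g_{1}^{q},\dots,g_{d}^{q})^{\mathsf T}$ for a matrix $A$ over $\F_{q}[x]$. Hence the $\F_{q}(x)$-span of $\{g_{1},\dots,g_{d}\}$ is finite-dimensional and its Frobenius images are again controlled by the same finite set, so $f,f^{q},\dots,f^{q^{d}}$ become linearly dependent over $\F_{q}(x)$; by the field criterion $f$ is algebraic. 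This is the routine direction, essentially a finite linear-algebra elimination.

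The converse, \emph{algebraic $\Rightarrow$ automatic}, is where the real work lies. Starting from a Frobenius dependence $\sum_{i=0}^{N}c_{i}f^{q^{i}}=0$ with $c_{i}\in\F_{q}[x]$, repeated use of the semilinearity relation shows that the $\F_{q}(x)$-span $V$ of the full orbit $\{\La_{\mathbf w}f\}$ is finite-dimensional; this part is again fairly mechanical. The genuine obstacle is to upgrade \emph{finite dimension over the function field $\F_{q}(x)$} to \emph{finitely many distinct sequences over $\F_{q}$}, which is what automaticity demands and which is strictly stronger. The standard device is to produce a finitely generated $\F_{q}[x]$-module (a lattice) $L\subseteq V$ that contains $f$ and is stable under all the $\La_{r}$: because $\La_{r}$ essentially divides exponents by $q$, it does not increase the relevant numerator/denominator degrees, so one can choose generators of $L$ of bounded degree. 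The orbit then lives inside a set of elements whose $\F_{q}$-data is bounded, hence is finite, giving a finite kernel. Controlling these degrees through a $\La_{r}$-stable lattice --- bridging $\F_{q}(x)$-finiteness and $\F_{q}$-finiteness --- is precisely the heart of Christol's theorem and the step I expect to be the main difficulty.
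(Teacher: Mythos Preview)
The paper does not prove this statement at all; its entire proof reads ``Can be found in \cite{AS}.'' Your proposal correctly anticipates this (``the honest plan is simply to invoke \cite{AS}'') and then goes well beyond what the paper does by sketching the standard Cartier-operator proof, which is essentially the argument presented in \cite{AS}; so your approach subsumes the paper's and your outline of both directions is accurate.
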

\begin{proof}
Can be found in \cite{AS}.
\end{proof}

\section{Basic properties}

In this section we collect the most basic properties of the $M$-ary partition polynomials. From the definition of the function $F_{M}(q,t)$ we have
\begin{align*}
F_{M}(q,t)=\frac{1}{1-tq}F_{M'}(q^{m_{1}},t),
\end{align*}
where $M'=(1,m_{2},m_{3},\ldots )$. Therefore,
\begin{align*}
\sum_{n=0}^{\infty}p_{M'}(n,t)q^{m_{1}n}=(1-tq)\left(\sum_{n=0}^{\infty}p_{M}(n,t)q^{n}\right)=\sum_{n=0}^{\infty}\big(p_{M}(n,t)-tp_{M}(n-1,t)\big)q^{n}.
\end{align*}
By comparing the coefficients on both sides of the above power series we get
\begin{align}\label{RecurRelationsMaryPartitions}
\left\{\begin{array}{ll}
p_{M}(m_{1}n+j,t)=tp_{M}(m_{1}n+j-1,t), & \textrm{ if } j\in \{1,\ldots ,m_{1}-1\}, \\
p_{M}(m_{1}n,t)=tp_{M}(m_{1}n-1,t)+p_{M'}(n,t). & 
\end{array}\right.
\end{align}
In particular, we can compute the first few $M$-ary partition polynomials.
\begin{exam}\label{ExampleSmallCases}
If $n=km_{1}+j$ for some $j\in\{0,\ldots ,m_{1}-1\}$ and $k\in\{0,\ldots ,m_{2}-1\}$, then
\begin{align*}
p_{M}(km_{1}+j,t) = t^{k+j} \frac{t^{(m_{1}-1)(k+1)}-1}{ t^{m_{1}-1} -1}.
\end{align*} 
\end{exam}
\begin{proof}
Because of the first equality in \eqref{RecurRelationsMaryPartitions}, it is enough to justify the formula in the case of $j=0$. If $k=0$ it is trivial. Assume it works for some $k-1\in\{0,\ldots ,m_{2}-2\}$. We get for $k$:
\begin{align*}
p_{M}(km_{1},t) & =t^{m_{1}}p_{M}( (k-1)m_{1},t) + p_{M'}(k,t) = t^{m_{1}}t^{k-1} \frac{t^{(m_{1}-1)k}-1}{ t^{m_{1}-1} -1} + t^{k} \\
 & = t^{k} \frac{t^{m_{1}-1} t^{(m_{1}-1)k} -t^{m_{1}-1} + t^{m_{1}-1} -1}{t^{m_{1}-1} - 1} = t^{k} \frac{t^{(m_{1}-1)(k+1)}-1}{ t^{m_{1}-1} -1}.
\end{align*}
In the above computations we used the fact that $p_{M'}(k,t) = t^{k}$ for $k\in\{0,\ldots ,m_{2}-1\}$ which follows trivially from the first equation in \eqref{RecurRelationsMaryPartitions} used with the sequence $M'$ instead of $M$.
\end{proof}

Relations \eqref{RecurRelationsMaryPartitions} allow us to find the degree of the polynomial $p_{M}(n,t)$. The formula in theorem below directly generalizes the result from \cite{BZ2} concerning the case $m=2$.

\begin{obs}\label{ObsDegree}
For every $M$ and $n$ we have $\deg p_{M}(n,t)=n$.
\end{obs}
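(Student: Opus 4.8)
The plan is to prove $\deg p_{M}(n,t)=n$ by induction on $n$, using the recurrence relations \eqref{RecurRelationsMaryPartitions} as the main engine. The base cases $n\in\{0,\ldots,m_1-1\}$ follow from Example \ref{ExampleSmallCases}: there $p_{M}(j,t)=t^{j}\cdot\frac{t^{m_1-1}-1}{t^{m_1-1}-1}=t^{j}$ (taking $k=0$), so the degree is exactly $j=n$, as required. This anchors the induction and also confirms that the leading coefficient is positive, a fact I expect to carry along.

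Next I would set up the inductive step by writing $n=m_{1}\lfloor n/m_1\rfloor+j$ with $j\in\{0,\ldots,m_1-1\}$ and splitting into two cases according to \eqref{RecurRelationsMaryPartitions}. In the case $j\in\{1,\ldots,m_1-1\}$, the recurrence gives $p_{M}(n,t)=t\,p_{M}(n-1,t)$, so $\deg p_{M}(n,t)=1+\deg p_{M}(n-1,t)=1+(n-1)=n$ by the inductive hypothesis; multiplication by $t$ raises the degree by exactly one and cannot introduce cancellation, so this case is immediate. The delicate case is $j=0$, where
\begin{align*}
p_{M}(m_{1}n',t)=t\,p_{M}(m_{1}n'-1,t)+p_{M'}(n',t),
\end{align*}
writing $n'=n/m_1$. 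Here two polynomials are added, and I must ensure their leading terms do not cancel.

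The heart of the argument is therefore a degree comparison in the $j=0$ case. By the inductive hypothesis, $\deg\bigl(t\,p_{M}(m_{1}n'-1,t)\bigr)=1+(m_1 n'-1)=m_1 n'=n$, while an auxiliary induction (or a parallel claim applied to the shifted sequence $M'$, whose associated products are $M_i/m_1$) gives $\deg p_{M'}(n',t)=n'$. Since $n'\le n/m_1<n$ for $n\ge 1$, the term $p_{M'}(n',t)$ has strictly smaller degree than $t\,p_{M}(m_1 n'-1,t)$, so no cancellation of the top coefficient can occur and $\deg p_{M}(n,t)=n$. To make this rigorous I would carry both statements together in a single induction on $n$ (simultaneously over all admissible shifted sequences), and I would track the leading coefficient explicitly—most cleanly, I expect it to be $1$ in all cases, since addition of a strictly-lower-degree polynomial leaves it unchanged and multiplication by $t$ preserves it.

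The main obstacle is exactly this bookkeeping across the shifted sequences $M'$: the recurrence relates $p_{M}$ to $p_{M'}$, so the induction must be phrased so that the hypothesis is available for $M'$ as well as $M$ at the relevant (strictly smaller) indices. The cleanest formulation is to prove, by strong induction on $n$, the statement ``for every admissible sequence $M$, $\deg p_{M}(n,t)=n$ and its leading coefficient equals $1$,'' so that both the $M$-term and the $M'$-term in the $j=0$ case fall under the inductive hypothesis; the degree inequality $n'<n$ then guarantees the non-cancellation that the whole argument hinges on.
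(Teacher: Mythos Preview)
Your proposal is correct and follows essentially the same route as the paper: strong induction on $n$, splitting into the cases $m_1\nmid n$ (where $p_M(n,t)=tp_M(n-1,t)$) and $m_1\mid n$ (where the added term $p_{M'}(n/m_1,t)$ has strictly smaller degree $n/m_1<n$, so no top-degree cancellation occurs). Your explicit insistence that the induction be carried simultaneously over all admissible sequences $M$ is exactly the point the paper handles tacitly when it invokes the hypothesis on $p_{M'}$; the extra bookkeeping on the leading coefficient is harmless but unnecessary, since the strict degree inequality $n'<m_1 n'$ already rules out cancellation.
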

\begin{proof}
We have $p_{M}(0,t)=1$ and $p_{M}(1,t)=t$. Assume that $\deg p_{M}(k,t)=k$ for $k<n$. Thus if $m_{1}\nmid n$, then by the induction hypothesis and relations \eqref{RecurRelationsMaryPartitions} we obtain
\begin{align*}
\deg p_{M}(n,t)=\deg \left( tp_{M}(n-1,t)\right)=1+\deg p_{M}(n-1,t)=1+(n-1)=n.
\end{align*}
If $m_{1}\mid n$, then by the induction hypothesis we get
\begin{align*}
\deg \left(t p_{M}(n-1,t)\right)=1+\deg p_{M}(n-1,t)= n>\frac{n}{m_{1}}=\deg p_{M'}\left(\frac{n}{m_{1}},t\right).
\end{align*}
We finish the proof analogously as in the previous case.
\end{proof}

\section{Rational roots}\label{SectionRationalRoots}

The first objects that we want to study in the context of polynomials in general, and the $M$-ary polynomials in particular, are their roots. For now, we focus on the existence of rational roots. This will naturally lead us to studying the sequence $(p_{M}(n,-1))_{n=0}^{\infty}$. Note that $p_{M}(n,-1)$ is equal to the difference between the number of $M$-ary partitions with even number of parts and the number of $M$-ary partitions with odd number of parts. 

In this section we narrow our considerations down to the case of $m$-ary partition polynomials where $m\in\mathbb{N}$. This is a special case of $M$-ary partition polynomials with $M=(1,m,m,m,\ldots )$. We will write $p_{m}(n,t)$ instead of $p_{M}(n,t)$ in this case.

In this case we simply have $M'=M$, so the recurrence relations \eqref{RecurRelationsMaryPartitions} simplify to
\begin{align}\label{RecurRelationsmaryPartitions}
\left\{\begin{array}{ll}
p_{m}(0,t)=1 , &  \\
p_{m}(mn+j,t)=tp_{m}(mn+j-1,t), & \textrm{ if } j\in \{1,\ldots ,m-1\}, \\
p_{m}(mn,t)=tp_{m}(mn-1,t)+p_{m}(n,t). & 
\end{array}\right.
\end{align}

Although the proof of the next lemma uses Theorem \ref{ThmRootsPm}, we state the lemma first to give the reader an insight about possible values of rational roots of $p_{m}(n,t)$.

\begin{lem}\label{LemRationalRoots}
If $r$ is a rational root of $p_{m}(n,t)$ for some $n$ then $r\in\{-1,0\}$.
\end{lem}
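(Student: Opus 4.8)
The plan is to use the rational root theorem as the backbone, combined with the recurrence \eqref{RecurRelationsmaryPartitions} to pin down the constant term and leading coefficient of $p_m(n,t)$, and then invoke the forthcoming Theorem \ref{ThmRootsPm} to rule out all candidates except $-1$ and $0$. First I would observe from the recurrence that $p_m(n,t)$ has nonnegative integer coefficients (indeed the coefficient $a_m(j,n)$ counts partitions with $j$ parts), that its leading coefficient is $1$ since $\deg p_m(n,t)=n$ by Observation \ref{ObsDegree} and the leading term always comes from multiplying by $t$, and that the constant term $a_m(0,n)=p_m(n,0)$ equals $1$ if $n=0$ and $0$ otherwise (since every partition with at least one part contributes a positive power of $t$). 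So for $n\geq 1$, $t=0$ is always a root, and by the rational root theorem any nonzero rational root $r=a/b$ in lowest terms must have $a\mid (\text{constant term})$ and $b\mid (\text{leading coefficient})=1$; but the constant term is $0$, so one factors out the largest power of $t$ dividing $p_m(n,t)$ first.

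The cleaner route is this: write $p_m(n,t)=t^{c}\,\widetilde{p}(t)$ where $\widetilde p(0)\neq 0$, and apply the rational root theorem to $\widetilde p$. Because the leading coefficient is $1$, every rational root of $\widetilde p$ is an integer dividing $\widetilde p(0)$. I would then argue that any integer root $r$ must satisfy $|r|\le 2$ (using the complex-root bound that is the content of Theorem \ref{ThmRootsPm} / Section \ref{SectionComplexRoots}, which states all complex roots lie in the disk of radius $2$), leaving only the candidates $r\in\{-2,-1,1,2\}$ for the nonzero rational roots. Since all coefficients of $\widetilde p$ are nonnegative and $\widetilde p(0)>0$, no positive value of $t$ can be a root, eliminating $r=1$ and $r=2$. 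This reduces the problem to showing that $r=-2$ is never a root.

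The main obstacle, therefore, is ruling out $r=-2$. Here I expect to lean on Theorem \ref{ThmRootsPm}, which presumably gives a sharper statement than the crude bound $|r|\le 2$ — for instance that the only root on the boundary circle $|t|=2$ or the only real root of modulus $2$ is excluded, or that $p_m(n,-2)\neq 0$ directly. Absent that, I would establish $p_m(n,-2)\neq 0$ by induction on $n$ using the recurrence: setting $t=-2$, the relations give $p_m(mn+j,-2)=(-2)^j p_m(mn,-2)$ for $0\le j<m$ together with $p_m(mn,-2)=(-2)p_m(mn-1,-2)+p_m(n,-2)$, and I would track the sign or $2$-adic valuation of these quantities to show they never vanish. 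A valuation argument looks most promising: one shows $v_2\big(p_m(n,-2)\big)$ is controlled by the base-$m$ digits of $n$ in a way that forces it to be finite, hence $p_m(n,-2)\ne 0$.

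Finally I would collect the pieces: $t=0$ covers the factored-out power $t^c$ (present exactly when $n\ge 1$), the sign argument kills all positive rationals, the radius-$2$ bound kills $|r|\ge 3$, and the $r=-2$ analysis kills the last positive-modulus even candidate, so the only surviving rational roots are $-1$ and $0$. I would remark that $r=-1$ genuinely occurs — $p_m(1,t)=t$ has root $0$ and, for example, small cases produce the factor $t+1$ — so the set $\{-1,0\}$ is sharp and cannot be shrunk further.
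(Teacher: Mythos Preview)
Your overall strategy---factor out the power of $t$, apply the rational root theorem to the cofactor $\widetilde p$, then eliminate candidates---is the right shape, but you have misread what Theorem~\ref{ThmRootsPm} actually says, and this sends you down a much harder road than the paper takes.

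Theorem~\ref{ThmRootsPm} is not a bound on complex roots; it is the statement that $\operatorname{ord}_{t=0}p_m(n,t)=s_m(n)$, the base-$m$ digit sum of $n$. Combined with a one-line combinatorial observation (there is a \emph{unique} $m$-ary partition of $n$ with exactly $s_m(n)$ parts, namely the base-$m$ expansion itself), it tells you that the lowest nonzero coefficient of $p_m(n,t)$ is exactly $1$. In your notation, $\widetilde p(0)=1$. Once you know this, the rational root theorem applied to the monic polynomial $\widetilde p$ says every nonzero rational root divides $1$, so the only candidates are $\pm 1$; positivity of all coefficients eliminates $+1$, and you are done. This is precisely the paper's argument.

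Your proposal, by contrast, never computes $\widetilde p(0)$; instead you invoke the radius-$2$ bound from Section~\ref{SectionComplexRoots} (which you mistakenly attribute to Theorem~\ref{ThmRootsPm}), leaving $r=-2$ to be handled by an ad hoc $2$-adic valuation argument that you do not carry out. Even on its own terms this detour is rougher than necessary: Theorem~\ref{ThmComplexRoots} gives the \emph{strict} inequality $|t|<4^{1/m}\le 2$, which already excludes $r=-2$ without any further work. But the cleaner fix is simply to recognize that the ``initial coefficient'' equals $1$, making the whole excursion through complex roots and valuations unnecessary.
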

\begin{proof}
A simple induction argument (together with Theorem \ref{ThmRootsPm} below) shows that for every $n$, the leading coefficient and the initial coefficient in $p_{m}(n,t)$ are both equal to $1$. Hence, $r$ has to be an integer and divide $1$ or be equal to $0$. Moreover, all nonzero coefficients in $p_{m}(n,t)$ are positive, so $p_{m}(n,1)>0$. Therefore, $r\in\{-1,0\}$.
\end{proof}

In order to state the next result we need the following notation. If $n=\sum_{j=0}^{k}n_{j}m^{j}$ is a unique $m$-ary representation of $n$, then we define
\begin{align*}
s_{m}(n):=\sum_{j=0}^{k}n_{j}.
\end{align*}
That is, $s_{m}(n)$ is the sum of digits in the $m$-ary representation of $n$. 

The next theorem provides information about the order of $0$ as a root of the polynomial $p_{m}(n,t)$.

\begin{theorem}\label{ThmRootsPm}
For every natural number $n$ the following formula holds:
\begin{align*}
\ord_{t=0}p_{m}(n,t)=s_{m}(n).
\end{align*}
\end{theorem}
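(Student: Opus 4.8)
The plan is to prove the identity by strong induction on $n$, using the recurrence relations \eqref{RecurRelationsmaryPartitions}; the combinatorial interpretation supplies the right intuition, since $\ord_{t=0}p_{m}(n,t)$ is precisely the least number of parts occurring among the $m$-ary partitions of $n$, and this minimum is realised by the base-$m$ representation, which uses $s_{m}(n)$ parts. The base case $n=0$ is immediate from $p_{m}(0,t)=1$. For the inductive step I write $n=mk+j$ with $0\le j\le m-1$ and $k=\lfloor n/m\rfloor$, and treat the two shapes of the recurrence separately.

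In the first case $j\in\{1,\ldots,m-1\}$, the recurrence gives $p_{m}(n,t)=t\,p_{m}(n-1,t)$, so $\ord_{t=0}p_{m}(n,t)=1+\ord_{t=0}p_{m}(n-1,t)$. Since passing from $n$ to $n-1=mk+(j-1)$ only lowers the final base-$m$ digit from $j$ to $j-1$, one has $s_{m}(n)=s_{m}(n-1)+1$, and the induction hypothesis applied to $n-1$ closes this case at once.

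The substantive case is $j=0$, i.e. $n=mk$ with $k\ge 1$, where $p_{m}(n,t)=t\,p_{m}(n-1,t)+p_{m}(k,t)$. Here the order of a sum must be controlled, and this is the main obstacle: a priori, cancellation among the lowest-degree terms could spoil the identity $\ord_{t=0}(A+B)=\min(\ord_{t=0}A,\ord_{t=0}B)$. I would resolve it by showing the two orders are in fact distinct. By the induction hypothesis $\ord_{t=0}p_{m}(k,t)=s_{m}(k)=s_{m}(n)$ (appending a zero digit does not change the digit sum), while $\ord_{t=0}\big(t\,p_{m}(n-1,t)\big)=1+s_{m}(mk-1)$. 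The identity $mk-1=m(k-1)+(m-1)$ shows that the base-$m$ expansion of $mk-1$ is that of $k-1$ followed by the digit $m-1$, whence $s_{m}(mk-1)=(m-1)+s_{m}(k-1)$. Combining this with the elementary bound $s_{m}(k)\le s_{m}(k-1)+1$ (adding $1$ raises the digit sum by at most $1$) yields $1+s_{m}(mk-1)=m+s_{m}(k-1)\ge s_{m}(k)+(m-1)>s_{m}(k)$. Thus the summand $p_{m}(k,t)$ has strictly smaller order, no cancellation can occur, and $\ord_{t=0}p_{m}(n,t)=s_{m}(k)=s_{m}(n)$, completing the induction. I note that the positivity of all nonzero coefficients of $p_{m}(n,t)$ gives an alternative safeguard against cancellation, but the strict inequality already makes it unnecessary.
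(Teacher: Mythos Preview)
Your proof is correct and follows essentially the same inductive strategy as the paper: split according to the recurrence \eqref{RecurRelationsmaryPartitions}, and in the case $n=mk$ show that the summand $p_{m}(k,t)$ has strictly smaller $t$-adic order than $t\,p_{m}(mk-1,t)$, so no cancellation occurs. Your derivation of the key inequality via $s_{m}(mk-1)=(m-1)+s_{m}(k-1)$ together with $s_{m}(k)\le s_{m}(k-1)+1$ is a slightly tidier packaging than the paper's direct expansion of the digits of $mn'-1$, but the substance is identical.
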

\begin{proof}
We proceed by induction. The statement is true for $n=0$ and $n=1$. Assume that it is true for some $n=mn'+a$, where $a\in\{0,\ldots ,m-2\}$. By the second equation in \eqref{RecurRelationsmaryPartitions} we have
\begin{align*}
\ord_{t=0}p_{m}(mn'+a+1,t)=1+\ord_{t=0}p_{m}(mn'+a,t)=1+s_{m}(mn'+a)=s_{m}(mn'+a+1).
\end{align*}
Hence, we can assume that $n=mn'+m-1$. Observe that $s_{m}(mn'-1)\geq s_{m}(n')$. Indeed, if $n'=\sum_{j=h}^{k}a_{j}m^{j}$, where $a_{h}$ is the smallest non-zero digit in the representation, then
\begin{align*}
mn'-1=\sum_{j=h+1}^{k}a_{j}m^{j+1}+ (a_{h}-1)m^{h+1}+\sum_{j=0}^{h}(m-1)m^{j}.
\end{align*}
Therefore,
\begin{align*}
s_{m}(mn'-1)=\sum_{j=h}^{k}a_{j}+(h+1)(m-1)-1\geq \sum_{j=h}^{k}a_{j}=s_{m}(n').
\end{align*}
Hence,
\begin{align*}
\ord_{t=0}p_{m}(mn'+m,t)= & \ord_{t=0}\big(tp_{m}(mn'+m-1,t)+p_{m}(n'+1,t)\big) \\
= & \min\big\{1+s_{m}(mn'+m-1),s_{m}(n'+1)\big\} \\ 
= & s_{m}(n'+1)=s_{m}(mn'+m),
\end{align*}
and this equality finishes the proof.
\end{proof}

Let us move to the problem when $-1$ is a root of some $m$-ary partition polynomial. Interestingly, the behaviour of polynomials $p_{m}(n,t)$ at $-1$ highly depends on the parity of $m$. We begin by studying the case of even $m$ in which we can provide a very good description of the numbers $p_{m}(n,-1)$.

\begin{theorem}\label{Thmmarypoly-1}
Let $n=a_{0}+a_{1}m+\cdots +a_{k}m^{k}$ be the base $m$ representation of $n$. If $2\mid m$, then
\begin{align*}
p_{m}(n,-1)=(-1)^{a_{0}}\prod_{j=1}^{k}\frac{1+(-1)^{a_{j}}}{2}.
\end{align*}
In particular $p_{m}(n,-1)\in\{-1,0,1\}$ and
\begin{align*}
p_{m}(n,-1)=0 \hspace{1cm} \textrm{ if and only if } \hspace{1cm} 2\nmid a_{j} \textrm{ for some } j\in\{1,\ldots ,k\}.
\end{align*}
\end{theorem}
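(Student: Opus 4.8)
The plan is to prove the formula by induction on $n$, closely mirroring the structure of the recurrence relations \eqref{RecurRelationsmaryPartitions}. The base cases $n=0$ and $n=1$ are immediate. For the inductive step I would split into the same two cases that appear in the recurrence. First, when $n=mn'+j$ with $j\in\{1,\ldots,m-1\}$, the relation gives $p_{m}(n,-1)=-p_{m}(n-1,-1)$. Since $n$ and $n-1$ differ only in the last base-$m$ digit (which drops from $a_{0}=j$ to $j-1$) while all higher digits $a_{1},\ldots,a_{k}$ are unchanged, the claimed product over $j\geq 1$ is identical for both, and the factor $(-1)^{a_{0}}$ flips sign exactly as the recurrence demands. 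This handles every residue class except $j=0$.

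The substantive case is $n=mn'$, where the recurrence reads $p_{m}(mn',-1)=-p_{m}(mn'-1,-1)+p_{m}(n',-1)$. Here I would apply the inductive hypothesis to both terms on the right. The key arithmetic observation is how the base-$m$ digits of $mn'-1$ relate to those of $n'$: writing $n'=\sum_{j=h}^{k}a_{j}m^{j}$ with $a_{h}$ the lowest nonzero digit, the same expansion of $mn'-1$ used in the proof of Theorem \ref{ThmRootsPm} shows that $mn'-1$ has last digit $m-1$, which is odd when $m$ is even. This forces the factor $\frac{1+(-1)^{m-1}}{2}=0$ to appear in the product formula for $p_{m}(mn'-1,-1)$, so that term vanishes entirely. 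What survives is $p_{m}(mn',-1)=p_{m}(n',-1)$, and since the base-$m$ digits of $mn'$ are just those of $n'$ shifted up by one place (with a new last digit $a_{0}=0$, contributing $(-1)^{0}=1$), the formula for $n'$ transports directly to $mn'$.

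The main obstacle, and the point requiring the most care, is verifying that the vanishing argument for the $mn'-1$ term is airtight when $n'$ itself may have trailing zeros; I would need the explicit digit expansion of $mn'-1$ rather than a hand-wave, precisely to confirm that its units digit is $m-1$ regardless of the internal structure of $n'$. Once the term is killed, the rest is bookkeeping on digit shifts. The final consequences then follow immediately: each factor $\frac{1+(-1)^{a_{j}}}{2}$ lies in $\{0,1\}$ and equals $0$ exactly when $a_{j}$ is odd, so the whole product is in $\{0,1\}$, the leading sign $(-1)^{a_{0}}$ places $p_{m}(n,-1)$ in $\{-1,0,1\}$, and $p_{m}(n,-1)=0$ holds if and only if some higher digit $a_{j}$ with $j\geq 1$ is odd.
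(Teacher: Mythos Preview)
Your inductive scheme is natural, but the crucial step for $n=mn'$ contains a genuine error. You claim that because the units digit of $mn'-1$ equals $m-1$, the factor $\frac{1+(-1)^{m-1}}{2}=0$ appears in the product formula for $p_{m}(mn'-1,-1)$. That is a misreading of the formula: the digit $a_{0}$ contributes only the sign $(-1)^{a_{0}}$, while the potentially-vanishing factors $\frac{1+(-1)^{a_{j}}}{2}$ occur solely for $j\geq 1$. Concretely, take $n'=1$: then $mn'-1=m-1$ has the single digit $a_{0}=m-1$, the product over $j\geq 1$ is empty, and $p_{m}(m-1,-1)=(-1)^{m-1}=-1\neq 0$. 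So the term you want to kill does not vanish in general; in fact it fails precisely when $m\nmid n'$, the opposite of the case you flagged as delicate.

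Your approach is salvageable: when $m\nmid n'$ (say the last digit of $n'$ is $a_{0}\neq 0$), the digits of $mn'-1$ are $(m-1,\,a_{0}-1,\,a_{1},\ldots,a_{k})$, and one checks the identity
\[
-(-1)^{m-1}\cdot\frac{1+(-1)^{a_{0}-1}}{2}+(-1)^{a_{0}}
=\frac{1-(-1)^{a_{0}}}{2}+(-1)^{a_{0}}
=\frac{1+(-1)^{a_{0}}}{2},
\]
which, multiplied by the common tail $\prod_{j\geq 1}\frac{1+(-1)^{a_{j}}}{2}$, gives exactly the formula for $mn'$. When $m\mid n'$, your vanishing argument does work, since then the digit of $mn'-1$ at position $1$ (not position $0$) is $m-1$. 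The paper sidesteps this case split altogether: it first telescopes the recurrence to $p_{mn_{1}}=\sum_{j=0}^{n_{1}}p_{j}$, then groups the sum into blocks of $m$ consecutive terms, each of which collapses to $p_{ml}\sum_{j=0}^{m-1}(-1)^{j}=0$, leaving only the final partial block $\frac{1+(-1)^{a_{1}}}{2}\,p_{mn_{2}}$.
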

\begin{proof}
Let $p_{n}:=p_{m}(n,-1)$. Relations \eqref{RecurRelationsmaryPartitions} imply
\begin{align*}
\left\{\begin{array}{ll}
p_{0}=1, & \\
p_{mn+j}=-p_{mn+j-1}, & \textrm{if } j\in\{1,\ldots , m-1\}, \\
p_{mn}=-p_{mn-1}+p_{m}.
\end{array}\right.
\end{align*}
In particular, we have $p_{mn+j}=(-1)^{j}p_{mn}$ for $j\in\{0,\ldots ,m-1\}$, and hence
\begin{align*}
p_{mn}=-p_{mn-1}+p_{n}=(-1)^{m}p_{m(n-1)}+p_{n}=p_{m(n-1)}+p_{n},
\end{align*}
because $m$ is even by the assumption. Therefore, if we  write $n_{1}=mn_{2}+a_{1}$, then
\begin{align*}
p_{mn_{1}}= &\ p_{m(n_{1}-1)}+p_{n_{1}}=p_{m(n_{1}-2)}+p_{n_{1}-1}+p_{n_{1}}=\cdots =\sum_{j=0}^{n_{1}}p_{j} \\
= &\ \sum_{l=0}^{n_{2}-1}\sum_{j=0}^{m-1}p_{ml+j} + \sum_{j=0}^{a_{1}}p_{mn_{2}+j}=\sum_{l=0}^{n_{2}-1}\sum_{j=0}^{m-1}(-1)^{j}p_{ml} + \sum_{j=0}^{a_{1}}(-1)^{j}p_{mn_{2}} \\
= &\ \sum_{l=0}^{n_{2}-1}\left(\sum_{j=0}^{m-1}(-1)^{j}\right)p_{ml} + \left(\sum_{j=0}^{a_{1}}(-1)^{j}\right)p_{mn_{2}} = 0+\frac{1+(-1)^{a_{1}}}{2}p_{mn_{2}}=\frac{1+(-1)^{a_{1}}}{2}p_{mn_{2}}.
\end{align*}

We now use the induction argument on the number of digits in the $m$-ary representation of $n$. If $n=a_{0}$ for some $a_{0}\in\{0,\ldots ,m-1\}$ then the result is true. Assume this is true for every natural number with at most $k$ digits in the $m$-ary representation. Let $n=a_{0}+a_{1}m+\cdots +a_{k}m^{k}$ (that is, $n$ has $k+1$ digits in its base $m$ representation). By our previous computations and the induction hypothesis we get
\begin{align*}
p_{mn_{1}}=\frac{1+(-1)^{a_{1}}}{2}p_{mn_{2}}=\frac{1+(-1)^{a_{1}}}{2}\prod_{j=2}^{k}\frac{1+(-1)^{a_{j}}}{2}=\prod_{j=1}^{k}\frac{1+(-1)^{a_{j}}}{2},
\end{align*}
and hence,
\begin{align*}
p_{n}=(-1)^{a_{0}}p_{mn_{1}}=(-1)^{a_{0}}\prod_{j=1}^{k}\frac{1+(-1)^{a_{j}}}{2},
\end{align*}
as desired.
\end{proof}

\begin{cor}\label{CorDiffEvenOdd}
Let
\begin{align*}
e_{m}(n):= &\ \#\left\{m-\textrm{ary partitions with even number of parts}\right\}, \\
o_{m}(n):= &\ \#\left\{m-\textrm{ary partitions with odd number of parts}\right\}.
\end{align*}
If $2\mid m$ and $n=a_{0}+a_{1}m+\cdots +a_{k}m^{k}$ is the base $m$ representation of $n$, then
\begin{align*}
e_{m}(n)-o_{m}(n)=(-1)^{a_{0}}\prod_{j=1}^{k}\frac{1+(-1)^{a_{j}}}{2}.
\end{align*}
In particular, $|e_{m}(n)-o_{m}(n)|\leq 1$.
\end{cor}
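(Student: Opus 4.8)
The plan is to observe that Corollary \ref{CorDiffEvenOdd} is essentially an immediate consequence of Theorem \ref{Thmmarypoly-1} once we identify the quantity $e_{m}(n)-o_{m}(n)$ with the value $p_{m}(n,-1)$. First I would recall the combinatorial interpretation of the $M$-ary partition polynomials given in the introduction: writing $p_{m}(n,t)=\sum_{j=0}^{n}a_{m}(j,n)t^{j}$, the coefficient $a_{m}(j,n)$ counts the $m$-ary partitions of $n$ into exactly $j$ parts (here specialising the general $M$ to $M=(1,m,m,\ldots)$). Evaluating at $t=-1$ therefore gives
\begin{align*}
p_{m}(n,-1)=\sum_{j=0}^{n}(-1)^{j}a_{m}(j,n)=\sum_{j\text{ even}}a_{m}(j,n)-\sum_{j\text{ odd}}a_{m}(j,n)=e_{m}(n)-o_{m}(n),
\end{align*}
since the partitions with an even number of parts contribute with sign $+1$ and those with an odd number of parts contribute with sign $-1$.

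Having made this identification, the result follows by direct substitution. Under the hypothesis $2\mid m$, Theorem \ref{Thmmarypoly-1} asserts precisely that for the base $m$ representation $n=a_{0}+a_{1}m+\cdots+a_{k}m^{k}$ one has
\begin{align*}
p_{m}(n,-1)=(-1)^{a_{0}}\prod_{j=1}^{k}\frac{1+(-1)^{a_{j}}}{2}.
\end{align*}
Combining the two displayed equations yields the claimed formula for $e_{m}(n)-o_{m}(n)$. The final bound $|e_{m}(n)-o_{m}(n)|\leq 1$ then comes for free from the ``In particular'' clause of Theorem \ref{Thmmarypoly-1}, which states that $p_{m}(n,-1)\in\{-1,0,1\}$: each factor $\frac{1+(-1)^{a_{j}}}{2}$ is either $0$ (when $a_{j}$ is odd) or $1$ (when $a_{j}$ is even), and the leading sign $(-1)^{a_{0}}$ has absolute value one, so the product lies in $\{-1,0,1\}$.

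There is essentially no obstacle here; the only point requiring any care is to justify cleanly that evaluating the generating polynomial at $t=-1$ computes the signed count of partitions by parity of the number of parts, which is immediate from the definition of $a_{m}(j,n)$ as the number of representations with exactly $j$ parts. The corollary is a restatement of Theorem \ref{Thmmarypoly-1} in purely combinatorial language, so the work has already been done and this step merely translates it.
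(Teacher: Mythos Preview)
Your proposal is correct and takes essentially the same approach as the paper: the paper's proof consists of the single sentence that the statement follows from the observation $p_{m}(n,-1)=e_{m}(n)-o_{m}(n)$ together with Theorem \ref{Thmmarypoly-1}, which is exactly what you do (with slightly more detail on the evaluation at $t=-1$).
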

\begin{proof}
The statement follows from the observation that $p_{m}(n,-1)=e_{m}(n)-o_{m}(n)$, and Theorem \ref{Thmmarypoly-1}.
\end{proof}

%In order to state the next corollary, we need to introduce the notion of automaticity.

Now we show the automaticity of the sequence $(e_{m}(n)-o_{m}(n))_{n=0}^{\infty}$ if $m$ is even. The required definitions have been resented in the introduction.

\begin{cor}
If $2\mid m$, then the sequence $(e_{m}(n)-o_{m}(n))_{n=0}^{\infty}$ is $m$-automatic.
\end{cor}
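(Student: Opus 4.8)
The plan is to invoke Christol's Theorem (Lemma \ref{ThmChristol}), which reduces $m$-automaticity over a field $\mathbb{F}_q$ to algebraicity of the generating function. However, there are two obstacles to applying it directly. First, Christol's Theorem requires the base to be a power of a prime, whereas $m$ is merely an even integer; second, it concerns sequences over a finite field, whereas $(e_m(n)-o_m(n))_{n=0}^{\infty}$ takes values in $\{-1,0,1\}\subset\mathbb{Z}$. Both obstacles are circumvented by the explicit product formula from Corollary \ref{CorDiffEvenOdd}, which I would use to prove automaticity directly from the definition of the $m$-kernel rather than through the algebraicity criterion.

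The core observation is that, by Corollary \ref{CorDiffEvenOdd}, the value of $a_n := e_m(n)-o_m(n)$ depends on the base-$m$ digits of $n$ in an almost positional way: writing $n = a_0 + a_1 m + \cdots + a_k m^k$, we have $a_n = (-1)^{a_0}\prod_{j=1}^{k}\frac{1+(-1)^{a_j}}{2}$. Each factor $\frac{1+(-1)^{a_j}}{2}$ equals $1$ if the digit $a_j$ is even and $0$ if it is odd, so the product vanishes as soon as any digit beyond the lowest is odd. Thus $a_n\in\{-1,0,1\}$, and its value is completely determined by (i) the parity of the leading digit $a_0$ and (ii) whether all higher digits $a_1,\ldots,a_k$ are even. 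This is exactly the kind of digit-recognizable condition that an $m$-automaton can track.

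To establish that the $m$-kernel $\mathcal{K}_m(\mathbf a)$ is finite, I would show that every sequence $(a_{m^i n + r})_{n=0}^{\infty}$, with $i\in\mathbb{N}_0$ and $0\le r<m^i$, coincides with one of finitely many sequences. Fix $i$ and $r$, and write $r=r_0+r_1 m+\cdots+r_{i-1}m^{i-1}$ in base $m$ (padded with zeros). The base-$m$ digits of $m^i n + r$ are precisely $r_0,\ldots,r_{i-1}$ followed by the digits of $n$. Applying the product formula, the factors coming from the fixed low digits $r_0,\ldots,r_{i-1}$ contribute a constant $c(r)\in\{-1,0,1\}$ that splits according to whether $r_1,\dots,r_{i-1}$ are all even and the parity of $r_0$; the remaining factors depend only on $n$. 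One checks that $a_{m^i n + r}$ equals $c(r)$ times a universal function of $n$ (which is itself, up to the leading-digit sign of $n$, the condition that all digits of $n$ are even). Since $c(r)\in\{-1,0,1\}$ and there are only finitely many resulting functional forms, only finitely many distinct sequences arise in the kernel, and hence $\mathcal{K}_m(\mathbf a)$ is finite.

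The main obstacle I anticipate is the bookkeeping when a low fixed digit $r_0$ interacts with the ``leading digit'' role: in $m^i n + r$, the digit $r_0$ is the overall lowest digit and governs the sign $(-1)^{a_0}$, while the digits of $n$ are shifted up and all play the ``must be even'' role. One must be careful with the boundary case $r=0$ (empty low block) and with the edge effect that the top digit of $n$ occupies a position that is always a ``higher'' digit of $m^i n + r$ and therefore is subject to the evenness constraint, not the sign. Organizing the finitely many cases cleanly — indexed essentially by $c(r)\in\{-1,0,1\}$ — is the only delicate point; once the kernel is exhibited as a finite set, finiteness is immediate and the $m$-automaticity follows by definition.
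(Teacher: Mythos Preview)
Your proposal is correct and follows essentially the same approach as the paper: both prove finiteness of the $m$-kernel directly from the product formula of Corollary~\ref{CorDiffEvenOdd}, arriving at the same finite kernel $\{(p_n),\,(p_{mn}),\,(-p_{mn}),\,(0)\}$. The only cosmetic difference is that the paper first isolates the two-step relation $p_{m(mn+a)+b}=(-1)^b\frac{1+(-1)^a}{2}\,p_{mn}$ and iterates it, whereas you read off the kernel sequences in one shot from the full digit expansion of $m^{i}n+r$.
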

\begin{proof}
We will prove that the $m$-kernel of the considered sequence is finite. Recall that $e_{m}(n)-o_{m}(n)=p_{m}(n,-1)=:p_{n}$ for all $n\in\mathbb{N}_{0}$. Theorem \ref{Thmmarypoly-1} implies the following relations for all $n\in\mathbb{N}_{0}$ and $0\leq a,b\leq m-1$:
\begin{align*}
p_{m(mn+a)+b}=(-1)^{b}\frac{1+(-1)^{a}}{2}p_{mn}.
\end{align*}
Hence,
\begin{align*}
p_{m^{2}n+b}= &\ (-1)^{b}p_{m^{2}n}=(-1)^{b}p_{mn}=\left\{\begin{array}{ll}
p_{mn}, & 2\mid b, \\
-p_{mn}, & 2\nmid b
\end{array}\right., \\
p_{m^{2}n+ma+b}= &\ \left\{\begin{array}{ll}
p_{m^{2}n+b}, & 2\mid a, \\
0, & 2\nmid a
\end{array}\right.
\end{align*}
In particular, every sequence of the form $(p_{m^{j}n+k})_{n=0}^{\infty}$ is equal to one of the sequences: $(p_{n})_{n=0}^{\infty}$, $(p_{mn})_{n=0}^{\infty}$, $(-p_{mn})_{n=0}^{\infty}$ or the constant sequence $(0)_{n=0}^{\infty}$. Thus
\begin{align*}
\mathcal{K}_{m}\big((p_{n})_{n=0}^{\infty}\big)=\big\{\ (p_{n})_{n=0}^{\infty},\ (p_{mn})_{n=0}^{\infty},\ (-p_{mn})_{n=0}^{\infty},\ (0)_{n=0}^{\infty}\ \big\}.
\end{align*}
is finite and the result follows.
\end{proof}

One can see that the sequence $(e_{m}(n)-o_{m}(n))_{n=0}^{\infty}$ is generated by the following automaton (reading the input starts with the least significant digit):
 \begin{center}
	\begin{tikzpicture}[->, shorten >= 1pt, node distance=2.5 cm, on grid, auto]
	\node[state] (c_0) {$1$};
	\node[inner sep=1pt] (c_s) [left=1cm of c_0] {};
	\node[state, inner sep=1pt] (c_1) [above right=1cm and 2.5cm of c_0] {$1$};
	\node[state, inner sep=1pt] (c_2) [below right=1cm and 2.5cm of c_0] {$-1$};
	\node[state, inner sep=1pt] (c_3) [below right=1cm and 2.5cm of c_1] {$0$};
	\path[->]
	(c_0) edge node {even} (c_1)
	edge node {odd} (c_2)
	(c_s) edge node {} (c_0)
	(c_1) edge [loop above] node {even} (c_1)
	edge node {odd} (c_3)
	(c_2) edge [loop below] node {even} (c_2)
	edge node {odd} (c_3)
	(c_3) edge [loop right] node {all} (c_3);
	\end{tikzpicture}
    \end{center}
where the term 'even' means any even digit in the base $m$ representation, and similarly 'odd' means any odd digit.

One can also give an automaton generating the sequence $(e_{m}(n)-o_{m}(n))_{n=0}^{\infty}$ where reading the input starts with the most significant digit. It is the following:
\begin{center}
	\begin{tikzpicture}[->, shorten >= 1pt, node distance=2.5 cm, on grid, auto]
	\node[state] (c_0) {$1$};
	\node[inner sep=1pt] (c_s) [left=1cm of c_0] {};
	\node[state, inner sep=1pt] (c_1) [right=2.5cm of c_0] {$-1$};
	\node[state, inner sep=1pt] (c_2) [right=2.5cm of c_1] {$0$};
	\path[->]
	(c_0) edge [loop above] node {even} (c_0)
	edge node {odd} (c_1)
	(c_s) edge node {} (c_0)
	(c_1) edge node {all} (c_2);
	\end{tikzpicture}
    \end{center}
We use the same convention using the terms 'even' and 'odd' as before.

The case of odd $m$ is completely different.

\begin{theorem}
If $2\nmid m$, then for every $n$:
\begin{align*}
p_{m}(n,-1)=(-1)^{n}p_{m}(n,1).
\end{align*}
In particular, $|p_{m}(n,-1)|\to\infty$ as $n\to\infty$, and $p_{m}(n,-1)\neq 0$ for all $n$'s.
\end{theorem}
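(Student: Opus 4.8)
The plan is to prove the identity by strong induction on $n$, playing the two specialisations $t=1$ and $t=-1$ of the recurrence \eqref{RecurRelationsmaryPartitions} against each other (recall that here $M'=M$, so \eqref{RecurRelationsmaryPartitions} is the relevant recurrence). Writing $b_{m}(n):=p_{m}(n,1)$ for brevity, the specialisation $t=1$ gives $b_{m}(mn+j)=b_{m}(mn+j-1)$ for $j\in\{1,\ldots ,m-1\}$ and $b_{m}(mn)=b_{m}(mn-1)+b_{m}(n)$, while $t=-1$ gives $p_{m}(mn+j,-1)=-p_{m}(mn+j-1,-1)$ and $p_{m}(mn,-1)=-p_{m}(mn-1,-1)+p_{m}(n,-1)$. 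The base cases $n=0,1$ are immediate from $p_{m}(0,t)=1$ and $p_{m}(1,t)=t$, so the claim $p_{m}(n,-1)=(-1)^{n}b_{m}(n)$ holds for $n\in\{0,1\}$.

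For the inductive step I would distinguish whether $m\mid n$. If $n=mn'+j$ with $j\in\{1,\ldots ,m-1\}$, then the middle relation and the inductive hypothesis give
\begin{align*}
p_{m}(n,-1)=-p_{m}(n-1,-1)=-(-1)^{n-1}b_{m}(n-1)=(-1)^{n}b_{m}(n-1)=(-1)^{n}b_{m}(n),
\end{align*}
the last step using $b_{m}(n)=b_{m}(n-1)$; this case is pure sign-bookkeeping. If instead $n=mn'$, the last relation together with the inductive hypothesis applied to $mn'-1$ and to $n'$ yields
\begin{align*}
p_{m}(mn',-1)=(-1)^{mn'}b_{m}(mn'-1)+(-1)^{n'}b_{m}(n').
\end{align*}
This is the \emph{only} place where the parity of $m$ enters: since $m$ is odd we have $(-1)^{mn'}=\big((-1)^{m}\big)^{n'}=(-1)^{n'}$, so both coefficients coincide and
\begin{align*}
p_{m}(mn',-1)=(-1)^{mn'}\big(b_{m}(mn'-1)+b_{m}(n')\big)=(-1)^{mn'}b_{m}(mn'),
\end{align*}
completing the induction. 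I would remark explicitly that for even $m$ this very step breaks, as $(-1)^{mn'}=1\neq(-1)^{n'}$ in general, which accounts for the sharp dichotomy with Theorem \ref{Thmmarypoly-1}.

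For the "in particular" assertions, recall $p_{m}(n,1)=b_{m}(n)$ counts $m$-ary partitions, so $b_{m}(n)\geq 1$ for every $n$; hence $|p_{m}(n,-1)|=b_{m}(n)\geq 1$ is never zero. For the divergence, the $t=1$ relations show $b_{m}$ is constant on each block $\{mk,\ldots ,mk+m-1\}$ and $b_{m}(mn)=b_{m}(mn-1)+b_{m}(n)=b_{m}(m(n-1))+b_{m}(n)\geq b_{m}(m(n-1))+1$, so by iteration $b_{m}(mn)\geq n+1$ and thus $b_{m}(N)\geq \lfloor N/m\rfloor +1\to\infty$; therefore $|p_{m}(n,-1)|\to\infty$. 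The computation carries no genuine difficulty: the whole argument is a one-line induction once the recurrences are written down, and the only substantive point, which I would highlight, is the identity $(-1)^{mn'}=(-1)^{n'}$ forced by the oddness of $m$, together with the elementary growth bound on $b_{m}(n)$.
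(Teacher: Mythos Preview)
Your proof is correct but proceeds differently from the paper. The paper argues in one stroke via generating functions: since $m$ is odd, $(-q)^{m^{j}}=-q^{m^{j}}$ for every $j$, so
\[
F_{m}(q,-1)=\prod_{j\geq 0}\frac{1}{1+q^{m^{j}}}=\prod_{j\geq 0}\frac{1}{1-(-q)^{m^{j}}}=F_{m}(-q,1),
\]
and comparing coefficients of $q^{n}$ gives $p_{m}(n,-1)=(-1)^{n}p_{m}(n,1)$ immediately. Your route instead specialises the recurrence \eqref{RecurRelationsmaryPartitions} at $t=\pm 1$ and runs a strong induction on $n$; the crucial use of oddness appears as the sign identity $(-1)^{mn'}=(-1)^{n'}$ in the $m\mid n$ case, exactly mirroring the infinite-product identity at the level of a single step. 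The generating-function argument is shorter and perhaps more conceptual, while your induction is entirely self-contained and makes the point of failure for even $m$ visible directly in the recurrence. You also supply an explicit growth bound $b_{m}(N)\geq \lfloor N/m\rfloor +1$ for the ``in particular'' claims, which the paper leaves implicit.
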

\begin{proof}
Let
\begin{align*}
F_{m}(q,t)=\sum_{n=0}^{\infty}p_{m}(n,t)q^{n}=\prod_{j=0}^{\infty}\frac{1}{1-tq^{m^{j}}}.
\end{align*}
Then
\begin{align*}
\sum_{n=0}^{\infty}p_{m}(n,-1)q^{n}= &\ F_{m}(q,-1)=\prod_{j=0}^{\infty}\frac{1}{1+q^{m^{j}}}=\prod_{j=0}^{\infty}\frac{1}{1-(-q)^{m^{j}}} \\
= &\ F_{m}(-q,1)=\sum_{n=0}^{\infty}p_{m}(n,1)(-q)^{n}=\sum_{n=0}^{\infty}(-1)^{n}p_{m}(n,1)q^{n},
\end{align*}
because $m$ is odd. The result follows.
\end{proof}

The above theorem implies that if $m$ is odd, then the sequence $\big(e_{n}(n)-o_{m}(n)\big)_{n=0}^{\infty}$ is unbounded and
\begin{align*}
(-1)^{n}\big(e_{m}(n)-o_{m}(n)\big)>0
\end{align*}
for every $n$. This shows a striking difference with the case of even $m$, in which $e_{m}(n)-o_{m}(n)\in\{-1,0,1\}$ for all $n$ and the sign of the difference $e_{n}(n)-o_{m}(n)$ depends on the first non-zero digit in the $m$-ary representation of $n$. However, this is exactly the behaviour that we can expect. More precisely, if $2\nmid m$ then we need even number of powers of $m$ to obtain any partition of any even number $n$, and odd number of such powers to obtain a partition of odd $n$. In the case of even $m$, one can guess, that $o_{m}(n)$ and $e_{m}(n)$ have similar size, and this is very precisely described in Corollary \ref{CorDiffEvenOdd}.

\section{Complex roots}\label{SectionComplexRoots}

In the previous section we have considered the rational roots of the polynomials $p_{M}(n,t)$. Hence, we can move to a more general problem of studying the complex roots of these polynomials. This is much more difficult task and we will not characterize all of the roots. Instead, our aim is to prove that all the roots of $m$-ary partition polynomials lie in a small region on a complex plane. In fact, we will prove in Theorem \ref{ThmComplexRoots} that all such roots are in the circle of radius $4^{1/m}$ centered at the origin. We start this section by a heuristic argument explaining why we may expect such a result.

Recall that Rouche's theorem says that if $f$ and $g$ are two functions holomorphic in a neighbourhood of the closure of a bounded region $K$ with closed contour $\partial K$ without self-intersections, and $|f(z)-g(z)|<|f(z)|$ for $z\in\partial K$, then $f$ and $g$ have the same number of zeros inside $K$ (counting multiplicities). For more details see for example \cite{GK}.

In the case of $m$-ary partition polynomials, we have
\begin{align*}
p_{m}(n,t)-tp_{m}(n-1,t)=p_{m}(n/m,t),
\end{align*}
where $p_{m}(n/m,t)=0$ for all $t$ if $m\nmid n$. It follows that the difference $p_{m}(n,t)-tp_{m}(n-1,t)$ is a polynomial of much smaller degree than both, $p_{m}(n,t)$ and $tp_{m}(n-1,t)$. Therefore, we may expect that if $|t|$ is large enough, then $|p_{m}(n,t)-tp_{m}(n-1,t)|<|tp_{m}(n-1,t)|$. Obviously, there might be some unexpected cancellations even if $|t|$ is very large. However, assume for a while that we know that such cancellations do not occur if $|t|\geq R$ for some $R$. Then if we knew somehow (for example from some induction hypothesis) that all the zeroes of the polynomial $p_{m}(n-1,t)$ lay inside the circle
\begin{align*}
C(R):=\big\{\ z\in\mathbb{C}\ \big|\ |z|=R\ \big\},
\end{align*}
then we would get by Rouche's theorem that all the zeroes of $p_{m}(n,t)$ are inside $C(R)$ too. Therefore, we may expect, that all the zeroes of all polynomials $p_{m}(n,t)$ are contained inside a circle $C(R)$ for some $R$ independent of $m$ and $n$.

 %However, we will show that they do not occur if $|t|\geq 4^{1/m}$.

The above heuristic argument agrees with computational data. Moreover, more precise statement seems to be true. Ulas (in private communication) stated in the case of $m=2$ the following problem, that we extend here to the general case. 

%Let us introduce some notation. For a complex number $a$ and a non-empty set $B$ let
%\begin{align*}
%{\rm d} \left(a,B\right):=\inf\{\ |a-b|\ |\ b\in B\ \}.
%\end{align*}
%If sets $A$ and $B$ are non-empty subsets of $\mathbb{C}$ let
%\begin{align*}
%{\rm D} \left(A,B\right):=\sup\{\ {\rm d} \left(a,B\right) \ |\ a\in A,\ b\in B\ \}.
%\end{align*}

\begin{conj}\label{ConjComplexRoots}
Let $C(1)$ be the unit circle with the center in the origin. For all natural numbers $m\geq 2$ and $n$ let
\begin{align*}
Z_{m,n}:=\{\ z\in\mathbb{C}\ |\ p_{m}(n,z)=0\ \}\setminus \{0\}.
\end{align*}
Moreover, for $N\in\mathbb{N}$ let
\begin{align*}
Z_{m}^{(N)}:=\bigcup_{n>N}Z_{m,n}.
\end{align*}
Then, for every $\varepsilon >0$ and for every $m$ there exists $N$ (possibly depending on $\varepsilon$ and $m$) such that
\begin{align*}
Z_{m}^{(N)}\subseteq \big\{\ z \ \big|\ 1-\varepsilon< |z| <1+\varepsilon \ \big\}.
\end{align*}
%\begin{align*}
%{\rm D} \left(C(1),Z_{m}^{(N)}\right)<\varepsilon.
%\end{align*}

Moreover, we believe that for every $m$:
\begin{align*}
\#\left(Z_{m}^{(0)}\cap \big\{\ z\ \big|\ |z|>1\ \big\}\right)=\#\left(Z_{m}^{(0)}\cap \big\{\ z\ \big|\ |z|<1\ \big\}\right)=\infty.
\end{align*}
That is, we believe that there are infinitely many roots of $m$-ary partition polynomials inside and outside the unit circle.
\end{conj}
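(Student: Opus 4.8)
The statement is Ulas' conjecture, so what follows is a strategy, together with an honest account of how far the natural approach reaches and where the genuine obstruction lies. The route I would take is through the distribution of zeros, exploiting two rigid pieces of information about the nonzero roots of $p_m(n,t)$: a bound on their modulus and the exact value of the product of their moduli. First I would record the normalisation. The polynomial $p_m(n,t)$ is monic (its leading coefficient equals $1$, as noted in the proof of Lemma \ref{LemRationalRoots}), and by Theorem \ref{ThmRootsPm} its lowest nonzero coefficient is $a_m(s_m(n),n)$, the number of $m$-ary partitions of $n$ with the minimal number of parts $s_m(n)$. Since any representation $n=\sum_j c_j m^j$ with some $c_j\geq m$ can be shortened by replacing $m$ copies of $m^j$ by one copy of $m^{j+1}$, the unique minimal partition is the base-$m$ one, so this coefficient equals $1$. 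Writing $p_m(n,t)=t^{s_m(n)}\prod_{i=1}^{N'}(t-z_i)$ with $N'=n-s_m(n)$ and all $z_i\neq 0$, comparison of constant terms gives $\prod_{i=1}^{N'}|z_i|=1$, i.e. $\sum_i\log|z_i|=0$. Together with Theorem \ref{ThmComplexRoots}, which gives $|z_i|\leq 4^{1/m}$, this pins the geometry of the zero set quite tightly.

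Next I would bound the Mahler measure $M(p_m(n)):=\exp\!\left(\frac{1}{2\pi}\int_0^{2\pi}\log|p_m(n,e^{i\theta})|\,d\theta\right)=\prod_{|z_i|>1}|z_i|$, where the second equality is Jensen's formula (the factor $t^{s_m(n)}$ contributes nothing). Because all coefficients $a_m(k,n)$ are nonnegative, on the unit circle $|p_m(n,e^{i\theta})|\leq p_m(n,1)=b_m(n)$, and $b_m(n)$ is subexponential: it is bounded by the number of unrestricted partitions $p(n)=e^{O(\sqrt n)}$, so $\log b_m(n)=o(n)$. Hence $\log M(p_m(n))\leq \log b_m(n)=o(n)$. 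Combining with $\sum_i\log|z_i|=0$ gives $\sum_{|z_i|<1}\log\frac1{|z_i|}=\sum_{|z_i|>1}\log|z_i|=\log M(p_m(n))$, whence $\sum_{i=1}^{N'}\bigl|\log|z_i|\bigr|=2\log M(p_m(n))=o(n)$. By Chebyshev's inequality, for every fixed $\eta>0$ the number of roots with $\bigl|\log|z_i|\bigr|>\eta$ is at most $2\eta^{-1}\log b_m(n)=o(n)=o(N')$. Thus the proportion of nonzero roots lying outside the annulus $\{e^{-\eta}<|z|<e^{\eta}\}$ tends to $0$ as $n\to\infty$: the zeros equidistribute towards the unit circle, which is exactly the phenomenon the conjecture asserts.

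The hard part, and the reason the statement is a conjecture rather than a theorem, is upgrading this averaged conclusion (\emph{almost all} roots are near the circle) to the uniform one demanded (\emph{every} nonzero root lies in the thin annulus once $n>N$). The potential-theoretic estimate tolerates up to $o(n)$ stray roots for each $n$ and, by its averaging nature, cannot exclude even a single persistent exceptional root. Removing the exceptions seems to require individual control of the roots, for instance a quantitative Rouché argument built on the recurrence $p_m(n,t)=t\,p_m(n-1,t)+p_m(n/m,t)$: on $|t|=1+\varepsilon$ one would like $|t\,p_m(n-1,t)|>|p_m(n/m,t)|$, which would push the zeros of $p_m(n,\cdot)$ inward inductively. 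The naive version fails precisely because $p_m(n-1,t)$ itself has zeros close to $|t|=1$ (by the very equidistribution above), so the putative dominant term $t\,p_m(n-1,t)$ is not uniformly large near the circle; controlling these near-circle cancellations is the genuine obstacle.

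For the second assertion I would reuse the normalisation $\prod_i|z_i|=1$: for any $n$ with $N'\geq 1$, as soon as $p_m(n,\cdot)$ has one root off the unit circle it must have roots both inside and outside it. Since all coefficients are nonnegative there are no positive real roots, so any off-circle root is negative real or properly complex. It therefore suffices to show that for infinitely many $n$ the monic integer polynomial $p_m(n,t)/t^{s_m(n)}$ is \emph{not} a product of cyclotomic polynomials; by Kronecker's theorem this is equivalent to it having a root off the unit circle. I would try to establish this through the growth of the coefficients $a_m(k,n)$, or by locating an explicit non-cyclotomic factor propagated through \eqref{RecurRelationsmaryPartitions}. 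I expect this part to be more tractable than the first, though still to require a nontrivial irreducibility- or factorisation-type input.
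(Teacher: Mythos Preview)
The statement is a \emph{conjecture} in the paper: there is no proof to compare against. The paper only offers numerical evidence (Figures \ref{Figure4all} and \ref{Figure5all}) and proves, as a partial result in a different direction, the uniform bound $|z|<4^{1/m}$ of Theorem \ref{ThmComplexRoots}. You recognise this and present a strategy rather than a claimed proof, which is the right stance.

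Your equidistribution argument is correct and is genuinely more than the paper establishes about this particular conjecture. The key ingredients --- that $p_m(n,t)$ is monic, that the lowest nonzero coefficient $a_m(s_m(n),n)$ equals $1$ (hence $\prod_i|z_i|=1$), and that $\log b_m(n)=o(n)$ gives a subexponential Mahler measure --- are all sound, and the conclusion that the proportion of nonzero roots outside any fixed annulus $\{1-\varepsilon<|z|<1+\varepsilon\}$ tends to $0$ follows cleanly. This is a real theorem, worth recording, and it explains the pictures.

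You are equally right about the obstruction: the averaged statement tolerates $o(n)$ stray roots, and nothing in the Mahler-measure bound rules out a single persistent exception. Your diagnosis of the failed Rouch\'e route on $|t|=1+\varepsilon$ is accurate --- the near-circle zeros of $p_m(n-1,t)$ kill the naive domination --- and this is exactly why the conjecture remains open. For the second assertion your reduction via $\prod_i|z_i|=1$ and Kronecker's theorem is the natural one; note however that you still need an argument that infinitely many $p_m(n,t)/t^{s_m(n)}$ are not products of cyclotomics, and the coefficient-growth or explicit-factor ideas you mention are plausible leads but not yet a proof.
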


Let us show some computations to support Conjecture \ref{ConjComplexRoots}. We present all the roots of the polynomials $p_{4}(n,t)$ for $1\leq n\leq 400$ in Figure \ref{Figure4all} and all the roots of the polynomials $p_{5}(n,t)$ for $1\leq n\leq 500$ in Figure \ref{Figure5all}. 

%\begin{center}
\begin{figure}[H]
\centering
\begin{minipage}{0.45\textwidth}
\centering
\includegraphics[width=7.6cm, height=7.9cm]{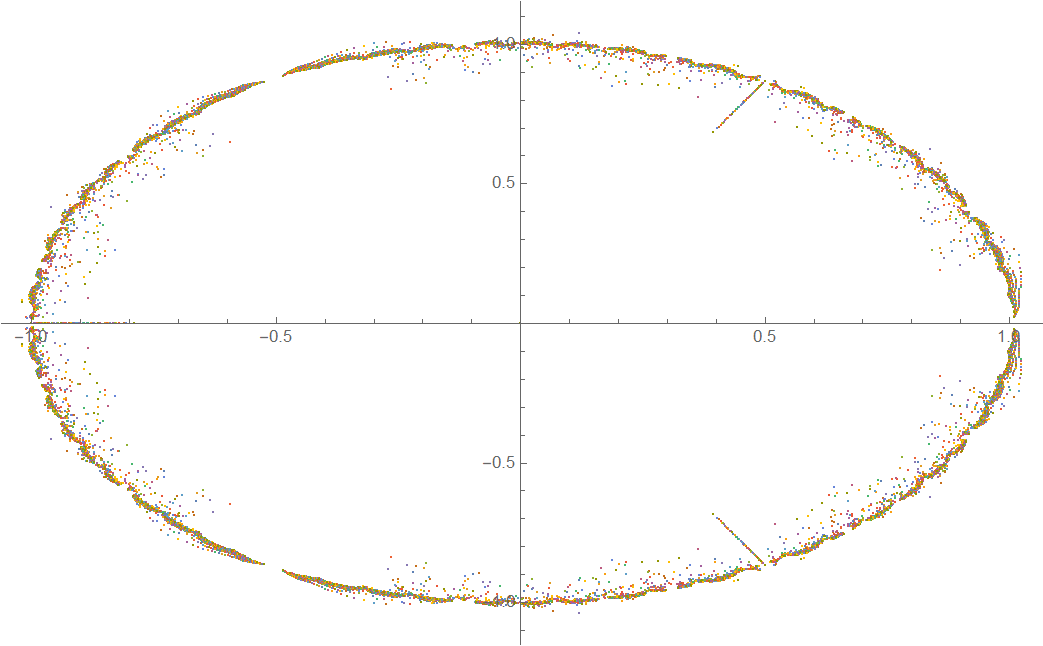}
\caption{All the roots of \\ the polynomials $p_{4}(n,t)$ for $1\leq n\leq 400$.}
\label{Figure4all}
%\end{figure}%\end{center}
%\begin{figure}[H]
\end{minipage}\hfill
\begin{minipage}{0.45\textwidth}
\centering
\includegraphics[width=7.6cm, height=7.9cm]{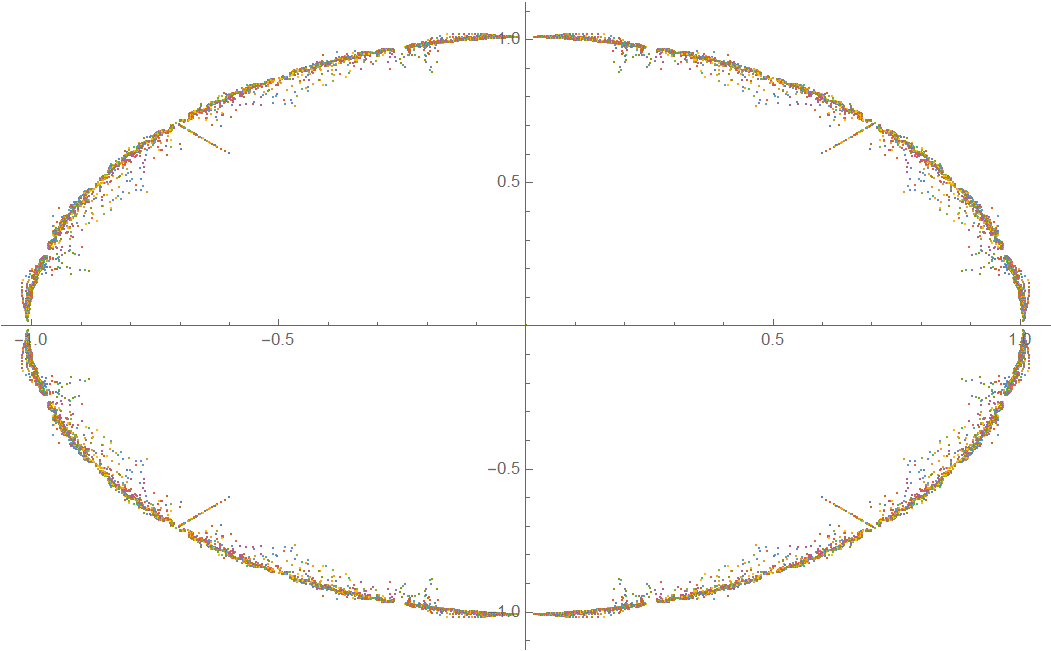}
\caption{All the roots of \\ the polynomials $p_{5}(n,t)$ for $1\leq n\leq 500$.}
\label{Figure5all}
\end{minipage}
\end{figure}

%We also provide pictures of all the roots of the polynomials $p_{4}(400,t)$ and $p_{5}(500,t)$ in Figure \ref{Figure45}. 

In fact, we made pictures of the roots of many polynomials $p_{m}(n,t)$ with small $m$ and large $n$ and all the pictures looked very similar to those that we present here.

%\begin{center}
%\begin{figure}[H]
%\centering
%    \begin{subfigure}[b]{\textwidth}
%        \centering
%        \includegraphics[width=8cm, height=8.2cm]{4}%
%        \hfill
%        \includegraphics[width=8cm, height=8.2cm]{5}
%        \caption{All the roots of the polynomials $p_{4}(400,t)$ and %$p_{5}(500,t)$.}
%        \label{Figure45}
%    \end{subfigure}
%\end{figure}
%\end{center}

We move to the main part of this section. Lemma \ref{LemComplexRoots} below gives precise description how and when the difference $|p_{m}(n,t)-tp_{m}(n-1,t)|$ is small compared to $|tp_{m}(n,t)|$.

\begin{lem}\label{LemComplexRoots}
Let $\varepsilon\in (0,1)$. If $|t|> \max\left\{\varepsilon^{-\frac{1}{m-1}},\big(\varepsilon (1-\varepsilon)\big)^{-\frac{1}{m}}\right\}$, then
\begin{align*}
|p_{m}(n,t)-tp_{m}(n-1,t)|<\varepsilon |tp_{m}(n-1,t)|.
\end{align*}
\end{lem}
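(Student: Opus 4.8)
The plan is to use the recurrence \eqref{RecurRelationsmaryPartitions} to collapse everything into a single scalar inequality, and then to control that inequality by comparing $|p_{m}(\cdot,t)|$ with powers of $r:=|t|$ (note $r>1$, since both quantities in the $\max$ exceed $1$). The key identity is that $p_{m}(n,t)-tp_{m}(n-1,t)=p_{m}(n/m,t)$ when $m\mid n$ and $p_{m}(n,t)-tp_{m}(n-1,t)=0$ otherwise, which is just the second and third lines of \eqref{RecurRelationsmaryPartitions}. If $m\nmid n$ the left-hand side is zero, so the asserted inequality holds as soon as $tp_{m}(n-1,t)\neq 0$; since $r>1$ this amounts to $p_{m}(n-1,t)\neq 0$, which will follow for free from the lower bound below. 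If $m\mid n$, write $n=mq$; applying the second line of \eqref{RecurRelationsmaryPartitions} repeatedly gives $p_{m}(mq-1,t)=t^{m-1}p_{m}(m(q-1),t)$, so the claim becomes
\[
|p_{m}(q,t)|<\varepsilon\, r^{m}\,|p_{m}(m(q-1),t)|.
\]

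To estimate the two sides I would use one easy upper bound and one more delicate lower bound. For the numerator, since every coefficient of $p_{m}(q,t)$ is a nonnegative integer, the triangle inequality gives $|p_{m}(q,t)|\le p_{m}(q,r)$. For the denominator, unrolling the third line of \eqref{RecurRelationsmaryPartitions} yields the telescoped identity $p_{m}(mp,t)=t^{mp}+\sum_{i=1}^{p}t^{m(p-i)}p_{m}(i,t)$ with $p=q-1$, whose leading term is $t^{mp}$. The reverse triangle inequality then gives $|p_{m}(mp,t)|\ge r^{mp}\bigl(1-H_{p}\bigr)$, where $H_{p}:=\sum_{i=1}^{p}r^{-mi}p_{m}(i,r)$ collects the lower-order contributions. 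Feeding both estimates into the displayed inequality and dividing by $r^{mq}$ reduces the entire lemma to the scalar statement $r^{-mq}p_{m}(q,r)<\varepsilon\,(1-H_{q-1})$, equivalently $1-H_{q}>(1-\varepsilon)(1-H_{q-1})$.

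Now the two thresholds appear naturally. The base case $q=1$ is $H_{1}=r^{-m}p_{m}(1,r)=r^{1-m}<\varepsilon$, which is exactly $r>\varepsilon^{-1/(m-1)}$; concretely, at $n=m$ the reduced inequality is simply $|p_{m}(1,t)|=r<\varepsilon r^{m}|p_{m}(0,t)|=\varepsilon r^{m}$. For $q\ge 2$ the point is to control the whole correction: the total tail is $H_{\infty}=\sum_{i\ge 1}r^{-mi}p_{m}(i,r)=F_{m}(r^{-m},r)-1$, i.e. the generating function evaluated at $x=r^{-m}$, so its size is governed precisely by the parameter $r^{-m}$. Since $p_{m}(i,r)$ grows like $r^{i}$, the increments $r^{-mi}p_{m}(i,r)$ decay geometrically, and requiring $r^{-m}<\varepsilon(1-\varepsilon)$ keeps the partial sums $H_{q-1}$ strictly below $1$ (so the denominator bound is genuinely positive) while leaving a margin of at least $1-\varepsilon$ in $1-H_{q-1}$; this is exactly what makes the step inequality $1-H_{q}>(1-\varepsilon)(1-H_{q-1})$ hold for every $q$, and it is the origin of the second threshold $\bigl(\varepsilon(1-\varepsilon)\bigr)^{-1/m}$.

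The hard part will be the lower bound on the denominator: a priori there could be cancellation making $|p_{m}(m(q-1),t)|$ far smaller than $r^{m(q-1)}$ — this is precisely the ``unexpected cancellation'' flagged in the heuristic preceding the lemma. Ruling it out means proving $H_{q-1}<1$ uniformly, i.e. that the telescoped tail never overwhelms the leading power $t^{mp}$, and this is delicate exactly when $\varepsilon$ is close to $1$, which is why the constraint must be phrased through $\varepsilon(1-\varepsilon)$ rather than $\varepsilon$ alone. By contrast the upper bound, the reduction via \eqref{RecurRelationsmaryPartitions}, and the base-case computation are routine; the genuine bookkeeping is in arranging that the two thresholds together suffice for all $q$ and all $\varepsilon\in(0,1)$.
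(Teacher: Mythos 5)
Your reduction to the case $m\mid n$, the telescoped identity $p_{m}(mp,t)=t^{mp}+\sum_{i=1}^{p}t^{m(p-i)}p_{m}(i,t)$, and the base case all match the paper. The gap is in the step you defer as ``the hard part'': the scalar inequality $r^{-mq}p_{m}(q,r)<\varepsilon(1-H_{q-1})$ to which you reduce the lemma is \emph{strictly stronger} than the lemma and is actually false for admissible parameters, so no amount of bookkeeping will establish it. Take $m=2$, $\varepsilon=\tfrac12$, so the hypothesis is $r=|t|>\max\{2,2\}=2$, and take $r$ slightly above $2$. At $q=2$ your sufficient condition reads $p_{2}(2,r)=r^{2}+r<\varepsilon r^{2}\cdot r^{2}(1-H_{1})=\tfrac12 r^{2}(r^{2}-r)$, which at $r=2$ is $6<4$ and remains false for $r$ up to about $2.3$ --- even though the lemma itself at $n=4$ is just $|t||t+1|<\tfrac12|t|^{3}|t+1|$, true for all $|t|>\sqrt2$. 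Worse, $H_{3}=\tfrac1r+\tfrac1{r^{2}}+\tfrac2{r^{3}}+\tfrac1{r^{4}}$ exceeds $1$ for $r$ near $2$ (it equals $1.0625$ at $r=2$), so from $q=4$ onward your lower bound $|p_{2}(m(q-1),t)|\ge r^{m(q-1)}(1-H_{q-1})$ is vacuous. The hoped-for uniform bound $H_{q}<1$ is not merely delicate; it fails within the stated region.

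The reason the approach loses too much is that you bound the numerator and denominator by triangle inequalities pointing in opposite directions ($|p_{m}(q,t)|\le p_{m}(q,r)$ above, $|p_{m}(mp,t)|\ge r^{mp}(1-H_{p})$ below), discarding the fact that both are the same polynomials evaluated at the same $t$; near the boundary of the hypothesis the slack in these two bounds is larger than the margin $\varepsilon$ you are trying to exhibit. The paper avoids this by making the lemma itself the induction hypothesis: the inequality for indices $k<n$ immediately gives the \emph{relative} bound $|p_{m}(mk,t)|<\frac{1}{|t|^{m}(1-\varepsilon)}|p_{m}(m(k+1),t)|$, and chaining it yields $|p_{m}(mn',t)|<\big(|t|^{m}(1-\varepsilon)\big)^{-(n-n'-1)}|p_{m}(m(n-1),t)|$. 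Here each step costs a factor $(1-\varepsilon)$ --- so the cumulative loss $(1-\varepsilon)^{n-n'-1}$ tends to $0$, unlike your fixed constant $1-H_{\infty}$ --- but it is compensated by the factor $|t|^{m}$ gained at the same step, and the final verification only needs $|t|^{m}\ge\big(\varepsilon(1-\varepsilon)\big)^{-1}$ once. To repair your argument you would have to replace the absolute quantity $1-H_{q}$ by a relative one that is allowed to decay geometrically, at which point you have essentially reconstructed the paper's induction.
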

\begin{proof}
If $m\nmid n$, then $|p_{m}(n,t)-tp_{m}(n-1,t)|=0$ and the statement is clearly true. Therefore, it is enough to show that
\begin{align}\label{IneqComplexRoots1}
|p_{m}(mn,t)-tp_{m}(mn-1,t)|<\varepsilon |tp_{m}(mn-1,t)|
\end{align}
for every $n$. For $n=1$ and for every $t$ such that $|t|> 1/\varepsilon^{1/(m-1)}$ we have
\begin{align*}
|p_{m}(m,t)-tp_{m}(m-1,t)|=|(t^{m}+t)-t\cdot t^{m-1}|=|t|<\varepsilon|t|^{m}=\varepsilon |tp_{m}(m-1,t)|.
\end{align*}

Let us assume that \eqref{IneqComplexRoots1} holds for all $k<n$. We want to prove it for $n$. Let us observe that the induction hypothesis (that is, \eqref{IneqComplexRoots1} with $k$ used instead of $n$) implies 
\begin{align}\label{IneqComplexRoots2}
|p_{m}(mk,t)|<\frac{1}{|t|^{m}(1-\varepsilon )}|p_{m}(m(k+1),t)|
\end{align}
for all $k\leq n-2$. Indeed, \eqref{IneqComplexRoots1} gives
\begin{align*}
|p_{m}(m(k+1),t)|\geq &\  |tp_{m}(m(k+1)-1,t)|-|p_{m}(m(k+1),t)-tp_{m}(m(k+1)-1,t)| \\ 
> &\ (1-\varepsilon )|tp_{m}(m(k+1)-1,t)|,
\end{align*}
and hence,
\begin{align*}
|p_{m}(mk,t)|= &\ \frac{1}{|t|^{m-1}}|p_{m}(m(k+1)-1,t)| \\ 
< &\ \frac{1}{|t|^{m-1}}\cdot \frac{1}{|t|(1-\varepsilon )} |p_{m}(m(k+1),t)|=\frac{1}{|t|^{m}(1-\varepsilon)}|tp_{m}(m(k+1),t)|.
\end{align*}

Let us now write $n=mn'+r$ for some $r\in\{0,\ldots ,m-1\}$. We apply \eqref{IneqComplexRoots2} $(n-1)-n'$ times and get
\begin{align*}
|p_{m}(mn',t)|<\frac{1}{|t|^{m(n-n'-1)}(1-\varepsilon )^{n-n'-1}}|p_{m}(m(n-1),t)|.
\end{align*}
Finally, we have the following chain of (in)equalities:
\begin{align*}
|p_{m}(mn,t)-tp_{m}(mn-1,t)|= &\ |p_{m}(n,t)|=|p_{m}(mn'+r,t)|=|t|^{r}|p_{m}(mn',t)| \\ 
< &\ \frac{|t|^{r}}{|t|^{m(n-n'-1)}(1-\varepsilon )^{n-n'-1}}|p_{m}(m(n-1),t)| \\
= &\ \frac{|t|^{r-(m-1)}}{|t|^{m(n-n'-1)}(1-\varepsilon )^{n-n'-1}}|p_{m}(m(n-1)+(m-1),t)| \\ 
\leq &\ \frac{1}{|t|^{m(n-n'-1)}(1-\varepsilon )^{n-n'-1}}|p_{m}(mn-1,t)|.
\end{align*}
In order to finish the proof, it is enough to check when the expression at the end of the last chain of inequalities is less than or equal to $\varepsilon |tp_{m}(mn-1,t)|$. This is equivalent to
\begin{align*}
\frac{1}{|t|^{m(n-n'-1)}(1-\varepsilon )^{n-n'-1}}\leq \varepsilon |t|.
\end{align*}
Since $|t|>1$, it is enough to check whether
\begin{align*}
\frac{1}{|t|^{m(n-n'-1)}(1-\varepsilon )^{n-n'-1}} \leq \varepsilon,
\end{align*}
that is,
\begin{align*}
|t|\geq \frac{1}{\varepsilon^{1/(m(n-n'-1))}(1-\varepsilon)^{1/m}}.
\end{align*}
We have $n-n'-1\geq 1$ so it is enough to have 
\begin{align*}
|t|\geq \frac{1}{\varepsilon ^{1/m}(1-\varepsilon )^{1/m}}.
\end{align*}
This finishes the proof.
\end{proof}

At this point, we can just conclude that the theorem below follows from Rouche's theorem, as we did in the heuristic argument at the beginning of this section. However, the inequality from Lemma \ref{LemComplexRoots} used in more explicit way provides the same information with an additional lower bound for $p_{m}(n,t)$ when $|t|\geq 4^{1/m}$.

\begin{theorem}\label{ThmComplexRoots}
If $t\in\mathbb{C}$ is such that $p_{m}(n,t)=0$, then $|t|< 4^{1/m}$.
\end{theorem}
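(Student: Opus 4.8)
The plan is to feed the single value $\varepsilon=\tfrac12$ into Lemma \ref{LemComplexRoots} and convert its estimate into a strictly positive lower bound for $|p_m(n,t)|$ on the whole region $|t|\ge 4^{1/m}$; this makes a root with $|t|\ge 4^{1/m}$ impossible and hence proves the theorem.

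First I would identify the threshold. With $\varepsilon=\tfrac12$ we have $\varepsilon^{-1/(m-1)}=2^{1/(m-1)}$ and $\big(\varepsilon(1-\varepsilon)\big)^{-1/m}=4^{1/m}=2^{2/m}$, and since $\tfrac{2}{m}\ge\tfrac{1}{m-1}$ holds for all $m\ge 2$, the maximum of the two is $4^{1/m}$. Thus for every $t$ with $|t|>4^{1/m}$ Lemma \ref{LemComplexRoots} gives $|p_m(n,t)-tp_m(n-1,t)|<\tfrac12|tp_m(n-1,t)|$ for all $n$. The reverse triangle inequality then yields
\[
|p_m(n,t)|\ \ge\ |tp_m(n-1,t)|-|p_m(n,t)-tp_m(n-1,t)|\ >\ \tfrac12|t|\,|p_m(n-1,t)|,
\]
and, starting from $p_m(0,t)=1$, a one-line induction produces the explicit bound $|p_m(n,t)|>(|t|/2)^n>0$. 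In particular $p_m(n,t)\neq 0$ whenever $|t|>4^{1/m}$.

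The genuinely delicate point is the boundary circle $|t|=4^{1/m}$, where the hypothesis of Lemma \ref{LemComplexRoots} just fails: for $\varepsilon=\tfrac12$ the threshold equals $4^{1/m}$, and no admissible $\varepsilon$ pushes the factor $\big(\varepsilon(1-\varepsilon)\big)^{-1/m}$ strictly below $4^{1/m}$, because $\varepsilon(1-\varepsilon)\le\tfrac14$. I would close this gap in one of two ways. The direct route is to retrace the inequality chain in the proof of Lemma \ref{LemComplexRoots} at $|t|^m=4$, $\varepsilon=\tfrac12$: the decisive requirement there collapses to $\big(|t|^m(1-\varepsilon)\big)^{\,n-n'-1}=2^{\,n-n'-1}\ge 2=1/\varepsilon$, which holds since $n-n'-1\ge 1$, so the chain survives with $\le$ in place of $<$ and delivers $|p_m(n,t)|\ge\tfrac12|t|\,|p_m(n-1,t)|\ge(|t|/2)^n>0$. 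Alternatively, and more cheaply, I would argue by continuity: fixing $t_0$ with $|t_0|=4^{1/m}$ and letting $t_r=r\,t_0/|t_0|\to t_0$ as $r\to (4^{1/m})^{+}$, the strict bound $|p_m(n,t_r)|>(r/2)^n$ passes to the limit to give $|p_m(n,t_0)|\ge(4^{1/m}/2)^n>0$. Either way no root lies on or outside the circle of radius $4^{1/m}$, and I expect this boundary bookkeeping to be the only real obstacle, the exterior case being an immediate consequence of the lemma.
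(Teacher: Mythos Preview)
Your proof is correct and follows essentially the same route as the paper: apply Lemma~\ref{LemComplexRoots} with $\varepsilon=\tfrac12$, iterate the resulting inequality to obtain $|p_m(n,t)|>(|t|/2)^n$ for $|t|>4^{1/m}$, and then handle the boundary circle by continuity, passing the uniform positive lower bound to the limit. The paper uses precisely your second (continuity) option for the boundary; your first option of retracing the lemma's proof at $|t|=4^{1/m}$ is a harmless extra alternative.
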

\begin{proof}
From Lemma \ref{LemComplexRoots} with $\varepsilon =\frac{1}{2}$ we get that if $|t|>\max\{2^{1/(m-1)},4^{1/m}\}=4^{1/m}$, then
\begin{align*}
|p_{m}(n,t)|>\frac{1}{2}|tp_{m}(n-1,t)|
\end{align*}
for all $n$. In particular, if $|t|>4^{1/m}=2^{2/m}$ we have
\begin{align*}
|p_{m}(n,t)|>\frac{|t|^{n}}{2^{n}}|p_{m}(0,t)|=\frac{|t|^{n}}{2^{n}}>2^{\left(\frac{2}{m}-1\right)n}.
\end{align*}
Thus $p_{m}(n,t)\neq 0$ if $|t|>4^{1/m}$. Moreover, each polynomial $p_{m}(n,t)$ is a continuous function and we proved that the values of $p_{m}(n,t)$ for $|t|>4^{1/m}$ are bounded away from $0$ by a constant independent of $t$. Therefore, $p_{m}(n,t)\neq 0$ if $|t|=4^{1/m}$. The result follows.
\end{proof}

Now we can prove the following fact that we predicted at the beginning of this section.

\begin{cor}
For all $m\geq 2$, all roots of $m$-ary partition polynomials lay inside the circle of radius $2$ centered in the origin.
\end{cor}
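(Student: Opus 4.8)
The plan is to derive this directly from Theorem \ref{ThmComplexRoots}, which already does all the real work: it asserts that any root $t$ of $p_{m}(n,t)$ satisfies $|t|<4^{1/m}$. All that remains is to compare the radius $4^{1/m}$ with $2$ uniformly over the allowed values of $m$.

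First I would note that the function $m\mapsto 4^{1/m}$ is strictly decreasing on $[2,\infty)$, since $4^{1/m}=\exp\big(\tfrac{\log 4}{m}\big)$ and $\log 4>0$. Hence its maximum over the range $m\geq 2$ is attained at $m=2$, where $4^{1/2}=2$. Consequently $4^{1/m}\leq 2$ for every integer $m\geq 2$, with equality only when $m=2$. Combining this with the strict bound from Theorem \ref{ThmComplexRoots}, every root $t$ of every polynomial $p_{m}(n,t)$ satisfies $|t|<4^{1/m}\leq 2$, so that $|t|<2$. This places all such roots strictly inside the circle of radius $2$ centred at the origin, as claimed.

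There is essentially no obstacle here: the corollary is a one-line consequence of the theorem together with the elementary monotonicity of $4^{1/m}$. The only point worth a remark is that the conclusion is uniform in both $m$ and $n$, and that the bound $2$ is genuinely sharp in the limiting sense, since for $m=2$ the threshold $4^{1/m}$ equals $2$ exactly; for all larger $m$ the roots are in fact confined to the strictly smaller disc of radius $4^{1/m}$.
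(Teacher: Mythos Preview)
Your argument is correct and is essentially identical to the paper's own proof: both invoke Theorem \ref{ThmComplexRoots} and then observe that $4^{1/m}\leq 4^{1/2}=2$ for $m\geq 2$. Your version simply spells out the monotonicity of $m\mapsto 4^{1/m}$ a bit more explicitly.
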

\begin{proof}
By Theorem \ref{ThmComplexRoots} it is enough to observe, that $4^{1/m}\leq 4^{1/2}=2$ for $m\geq 2$.
\end{proof}

We finish our study of complex roots of $m$-ary partition polynomials by the following observation. The figures \ref{Figure4all}. and \ref{Figure5all} show some symmetries. Especially, some roots seem to lie on (rotated and scaled) $(m-1)$th roots of unity. We will explain this phenomenon in the next result.

\begin{theorem}\label{ThmPolynomialP}
\begin{enumerate}
\item Let $\widetilde{p_{m}}(n,t):=\frac{1}{t^{s_{m}(n)}}p_{m}(n,t)$. Then $\widetilde{p_{m}}(n,t)\in \mathbb{N}_{0}\left[t^{m-1}\right]$.
\item Let $P_{m}(n,q):=\widetilde{p_{m}}(mn,q^{1/(m-1)})$. Then $P_{m}(n,q)\in\mathbb{N}_{0}[q]$. Moreover, we have the following recurrence: $P_{m}(k)=\frac{q^{k+1}-1}{q-1}$ for $k\in\{0,\ldots ,m-1\}$, and
\begin{align*}
P_{m}(n)=q^{1+\nu_{m}(n)}P_{}(n-1,q) + P_{m}\left(\left\lfloor\frac{n}{m}\right\rfloor ,q\right)
\end{align*}
for $n\geq 2$, where $\nu_{m}(n)$ denotes the highest power of $m$ that divides $n$.
\end{enumerate}
\end{theorem}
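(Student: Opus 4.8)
The plan is to prove both parts together by induction, the engine being a single master recurrence for $\widetilde{p_m}$ that simultaneously drives the membership claim of part (1) and, after the substitution $q=t^{m-1}$, reproduces the recurrence of part (2). Throughout I would invoke Theorem \ref{ThmRootsPm}, which gives $\ord_{t=0}p_m(n,t)=s_m(n)$ and hence ensures that $\widetilde{p_m}(n,t)=t^{-s_m(n)}p_m(n,t)$ is genuinely a polynomial with nonzero constant term, so the normalisations below are all legitimate.

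First I would record two elementary facts about the base-$m$ digit sum. Since the base-$m$ expansion of $mn+j$ is that of $n$ with the digit $j$ appended, one has $s_m(mn+j)=s_m(n)+j$ for $0\le j\le m-1$. Moreover, if $\nu_m(n)=v$ then $n$ ends in exactly $v$ zeros in base $m$, so subtracting $1$ lowers the least nonzero digit by one and turns the $v$ trailing zeros into $v$ copies of $m-1$; hence $s_m(n-1)=s_m(n)-1+v(m-1)$. These two identities carry all the exponent bookkeeping.

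Next I would establish block-constancy of $\widetilde{p_m}$. Dividing the first line of \eqref{RecurRelationsmaryPartitions} by $t^{s_m(mn+j)}$ and using $s_m(mn+j)=s_m(mn+j-1)+1$ gives $\widetilde{p_m}(mn+j,t)=\widetilde{p_m}(mn,t)$ for all $j\in\{0,\ldots,m-1\}$; in particular $\widetilde{p_m}(n,t)=\widetilde{p_m}(m\lfloor n/m\rfloor,t)$. Then, starting from the second line of \eqref{RecurRelationsmaryPartitions}, writing $mn-1=m(n-1)+(m-1)$, and dividing by $t^{s_m(mn)}=t^{s_m(n)}$, the two digit-sum identities collapse the exponent of the first summand to $(1+\nu_m(n))(m-1)$ and yield the master recurrence
\begin{align*}
\widetilde{p_m}(mn,t)=t^{(1+\nu_m(n))(m-1)}\,\widetilde{p_m}(m(n-1),t)+\widetilde{p_m}\!\left(m\left\lfloor\tfrac{n}{m}\right\rfloor,t\right).
\end{align*}

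With the master recurrence in hand, part (1) follows by strong induction on $n$: the value $\widetilde{p_m}(0,t)=1$ (and, concretely, $\widetilde{p_m}(mk,t)=1+t^{m-1}+\cdots+t^{k(m-1)}$ for $k<m$ from Example \ref{ExampleSmallCases}) serves as the base, block-constancy reduces an arbitrary argument to a multiple of $m$, and in the recurrence the exponent $(1+\nu_m(n))(m-1)$ is a multiple of $m-1$ while both $\widetilde{p_m}(m(n-1),t)$ and $\widetilde{p_m}(m\lfloor n/m\rfloor,t)$ lie in $\mathbb{N}_0[t^{m-1}]$ by induction, so their nonnegative-integer combination does as well. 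For part (2), part (1) makes the substitution $t^{m-1}\mapsto q$ well defined, whence $P_m(n,q)=\widetilde{p_m}(mn,q^{1/(m-1)})\in\mathbb{N}_0[q]$; the base values become $P_m(k,q)=1+q+\cdots+q^k=\frac{q^{k+1}-1}{q-1}$ for $k\in\{0,\ldots,m-1\}$, and the same substitution sends $t^{(1+\nu_m(n))(m-1)}$ to $q^{1+\nu_m(n)}$, turning the master recurrence into the asserted one $P_m(n,q)=q^{1+\nu_m(n)}P_m(n-1,q)+P_m(\lfloor n/m\rfloor,q)$. The main obstacle is the exponent bookkeeping inside the master recurrence; everything hinges on getting the factor $(1+\nu_m(n))(m-1)$ correct from the second digit-sum identity, after which both parts are purely formal.
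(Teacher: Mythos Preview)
Your proposal is correct and follows essentially the same route as the paper: both arguments derive the master recurrence $\widetilde{p_m}(mn,t)=t^{(m-1)(1+\nu_m(n))}\widetilde{p_m}(m(n-1),t)+\widetilde{p_m}(n,t)$ via the digit-sum identity $s_m(n-1)-s_m(n)+1=(m-1)\nu_m(n)$, then conclude part (1) by induction and part (2) by the substitution $q=t^{m-1}$, with the base cases coming from Example~\ref{ExampleSmallCases}. Your version is slightly more explicit in stating the block-constancy $\widetilde{p_m}(n,t)=\widetilde{p_m}(m\lfloor n/m\rfloor,t)$, which the paper uses tacitly when passing from $\widetilde{p_m}(n,t)$ to $P_m(\lfloor n/m\rfloor,q)$ in the final recurrence.
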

\begin{proof}
From Example \ref{ExampleSmallCases} we immediately get
\begin{align*}
\widetilde{p_{m}}(km,t) = \frac{1}{t^{k}}p_{m}(km,t) = \frac{t^{(m-1)(k+1)}-1}{t^{m-1}-1},
\end{align*}
so indeed $P_{m}(k,q)=\frac{q^{k+1}-1}{q-1}$ for $k\in\{0,\ldots ,m-1\}$.

Recurrence relations \eqref{RecurRelationsmaryPartitions} imply
\begin{align*}
\frac{1}{t^{s_{m}(mn)}}p_{m}(mn,t) = t^{s_{m}(n-1)-s_{m}(n)+1} t^{m-1} \left(\frac{1}{t^{s_{m}(n-1)}} p_{m}(m(n-1),t)\right) + \frac{1}{t^{s_{m}(n)}}p_{m}(n,t),
\end{align*}
or
\begin{align*}
\widetilde{p_{m}}(mn,t) = t^{s_{m}(n-1)-s_{m}(n)+1} t^{m-1}\widetilde{p_{m}}(m(n-1),t) + \widetilde{p_{m}}(n,t).
\end{align*}

Let us study the quantity $s_{m}(n-1)-s_{m}(n)+1$. Let $n=\sum_{j=h}^{r}a_{j}m^{j}$, where $a_{h}\geq 1$. Then $h=\nu_{m}(n)$ and
\begin{align*}
n-1 & = \sum_{j=h+1}^{r}a_{j}m^{j} + (a_{h}-1)m^{h} + (m-1)m^{h-1}+\cdots + (m-1)m+(m-1).
\end{align*}
Therefore,
\begin{align*}
s_{m}(n-1)-s_{m}(n)+1 & = \sum_{j=h+1}^{r}a_{j} + (a_{h}-1) + (m-1)h -\sum_{j=h}^{r}a_{j}+1 \\
 & = (m-1)h = (m-1)\nu_{m}(n).
\end{align*}
Hence,
\begin{align*}
\widetilde{p_{m}}(mn,t) = t^{(m-1)(\nu_{m}(n)+1)}\widetilde{p_{m}}(m(n-1),t) + \widetilde{p_{m}}(n,t),
\end{align*}
and we get by a simple induction argument, that indeed $\widetilde{p_{m}}(mn,t)\in\mathbb{N}[t^{m-1}]$. By putting the definition of $P_{m}(n,q)$ into above equality, we immediately get the second part of the theorem.
\end{proof}

%%%%%%%%%%%%%%%%%%%%%%%%%%%%%%%%%%%%%%%%%%%%%%%%%%%%%%%%%%%%%%%%%%%%%%%%%%%%%%%%%%%%%%%%%%%%%%%%%%%%%%%%%%%%%%%%%%%%%%%%%%%%%%%%%%%%%

\section{Certain properties modulo another polynomials}\label{SectionCertainPropertiesModulo}

The aim of this section is to provide a polynomial generalization of the known characterisation of the number of $m$-ary partitions of $n$ modulo $m$ that depends on the $m$-ary representation of $n$. Recall, that if $n=a_{0}+a_{1}m+\cdots +a_{k}m^{k}$, then 
\begin{align}\label{Chacterisationmodm}
p_{m}(n,1)\equiv \prod_{j=1}^{k}(a_{j}+1)\pmod m.
\end{align}
The above result was independently proved by Alkauskas in \cite{Alk}, and Andrews, Freankel and Sellers in \cite{And1}. Another proofs were found by Edgar in \cite{Edgar} and \.Zmija in \cite{BZ5}.

Very interestingly, we will prove analogous characterisation for a more general class of $M$-ary partitions. Moreover, a polynomial version of this characterisation can be provided. We will do it in Theorem \ref{TheoremCharacterisationModulo}.

Recall that $M=(m_{n})_{n=0}^{\infty}$ is a sequence of integers such that $m_{0}=1$ and $m_{n}\geq 2$ for $n\geq 1$, and $M'=(1,m_{2},m_{3},\ldots )$. In other words, $M'$ is the sequence obtained from $M$ by omitting~$m_{1}$. 

We begin by the following technical lemma.

\begin{lem}\label{LemCharacterisationModuloM}
Let $n=km_{2}+r$, where $r\in\{0,1,\ldots ,m_{2}-1\}$, and $f(m,t):=\frac{t^{m}-1}{t-1}$ for an integer $m\geq 0$. Then
\begin{align*}
p_{M}(m_{1}n,t)\equiv t^{r}f(r+1,t^{m_{1}-1}) p_{M'}(km_{2},t) \pmod{t^{m_{1}+m_{2}-1}f(m_{2},t^{m_{1}-1})}.
\end{align*}
\end{lem}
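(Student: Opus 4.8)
The plan is to prove the congruence by strong induction on $n$, using the recurrence relations \eqref{RecurRelationsMaryPartitions} together with the explicit small-case formula from Example \ref{ExampleSmallCases}. Write $Q(t):=t^{m_{1}+m_{2}-1}f(m_{2},t^{m_{1}-1})$ for the modulus. The base of the induction is the range $0\leq n<m_{2}$, i.e. $k=0$ and $r=n$; here the claim reads $p_{M}(m_{1}n,t)\equiv t^{r}f(r+1,t^{m_{1}-1})p_{M'}(0,t)=t^{r}f(r+1,t^{m_{1}-1})\pmod{Q(t)}$. This should follow directly from Example \ref{ExampleSmallCases}, which gives $p_{M}(m_{1}r,t)=t^{r}\frac{t^{(m_{1}-1)(r+1)}-1}{t^{m_{1}-1}-1}=t^{r}f(r+1,t^{m_{1}-1})$ exactly (not just modulo $Q$) for $0\leq r\leq m_{2}-1$, so the base case is in fact an equality.

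For the inductive step I would increase $r$ by one within a fixed block and separately handle the carry when $r$ rolls over from $m_{2}-1$ to $0$ (incrementing $k$). The first regime is governed by the top equation of \eqref{RecurRelationsMaryPartitions}: iterating $p_{M}(m_{1}\ell+j,t)=tp_{M}(m_{1}\ell+j-1,t)$ over $j=1,\ldots,m_{1}-1$ and then applying the bottom equation lets me express $p_{M}(m_{1}n,t)$ in terms of $p_{M}(m_{1}(n-1),t)$ and $p_{M'}(n,t)$. Concretely I expect a relation of the shape
\begin{align*}
p_{M}(m_{1}n,t)=t^{m_{1}}p_{M}(m_{1}(n-1),t)+p_{M'}(n,t),
\end{align*}
obtained by combining the two cases of \eqref{RecurRelationsMaryPartitions}. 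Feeding in the inductive hypothesis $p_{M}(m_{1}(n-1),t)\equiv t^{r-1}f(r,t^{m_{1}-1})p_{M'}(km_{2},t)\pmod{Q(t)}$ and using $p_{M'}(km_{2}+r,t)=t^{r}p_{M'}(km_{2},t)$ (again the first line of \eqref{RecurRelationsMaryPartitions} applied to $M'$, valid since $r<m_{2}$), the target coefficient of $p_{M'}(km_{2},t)$ becomes
\begin{align*}
t^{m_{1}}\cdot t^{r-1}f(r,t^{m_{1}-1})+t^{r}=t^{r}\big(t^{m_{1}-1}f(r,t^{m_{1}-1})+1\big)=t^{r}f(r+1,t^{m_{1}-1}),
\end{align*}
where the last equality is the identity $t^{m_{1}-1}f(r,t^{m_{1}-1})+1=f(r+1,t^{m_{1}-1})$ coming directly from $f(a,u)=\frac{u^{a}-1}{u-1}$ with $u=t^{m_{1}-1}$. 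This closes the step as long as $r$ stays in $\{1,\ldots,m_{2}-1\}$ and the congruence is taken modulo the same $Q(t)$.

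The delicate point, and the step I expect to be the main obstacle, is the carry: when $r=0$ (so $n=km_{2}$ with $k\geq 1$), the factor $p_{M'}(km_{2},t)$ on the right no longer simply absorbs a power of $t$, and instead one must pass from $p_{M'}((k-1)m_{2}+(m_{2}-1),t)$ across the block boundary. Here the modulus $Q(t)=t^{m_{1}+m_{2}-1}f(m_{2},t^{m_{1}-1})$ is engineered precisely so that the unwanted term $t^{m_{2}-1}f(m_{2},t^{m_{1}-1})p_{M'}(\cdots,t)$ (the full-block contribution) vanishes modulo $Q(t)$: note that $f(m_{2},t^{m_{1}-1})\mid Q(t)$, so any multiple of $f(m_{2},t^{m_{1}-1})$ carrying at least $t^{m_{1}}$ in front is killed. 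My plan is therefore to track the $t$-valuations carefully and verify that the boundary term is divisible by $Q(t)$, i.e. that it carries both the factor $f(m_{2},t^{m_{1}-1})$ and a power of $t$ at least $t^{m_{1}+m_{2}-1}$; the $t$-adic bookkeeping (via $\ord_{t=0}$, cf. the degree/order computations earlier in the paper) is exactly what guarantees this. Once the carry term is shown to lie in $(Q(t))$, the recurrence collapses to the claimed congruence and the induction goes through uniformly; I would present the in-block step and the carry step as the two cases of a single induction on $n$.
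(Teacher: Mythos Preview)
Your inductive approach is correct and takes a genuinely different route from the paper. The paper unfolds the recurrence all the way to the closed form $p_{M}(m_{1}n,t)=\sum_{j=0}^{n}t^{(n-j)m_{1}}p_{M'}(j,t)$, then groups this sum into consecutive blocks of length $m_{2}$, shows each full block is divisible by $Q(t)$ (which requires checking an inequality on the $t$-exponent), and evaluates the final partial block explicitly. Your argument walks one step of $n$ at a time and never writes down the closed sum; it is more elementary and avoids the exponent bookkeeping that the paper's block computation needs.

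You are overestimating the difficulty of the carry case. When $n=km_{2}$ with $k\geq 1$, the inductive hypothesis at $n-1=(k-1)m_{2}+(m_{2}-1)$ already gives
\[
p_{M}(m_{1}(n-1),t)\equiv t^{m_{2}-1}f(m_{2},t^{m_{1}-1})\,p_{M'}((k-1)m_{2},t)\pmod{Q(t)},
\]
so after multiplying by $t^{m_{1}}$ the right side becomes $t^{m_{1}+m_{2}-1}f(m_{2},t^{m_{1}-1})\,p_{M'}((k-1)m_{2},t)=Q(t)\cdot p_{M'}((k-1)m_{2},t)\equiv 0$. No further $t$-adic tracking is needed: the carry term is \emph{literally} $Q(t)$ times a polynomial, and $p_{M}(m_{1}n,t)\equiv p_{M'}(km_{2},t)=t^{0}f(1,t^{m_{1}-1})p_{M'}(km_{2},t)$ follows immediately. (Your parenthetical ``any multiple of $f(m_{2},t^{m_{1}-1})$ carrying at least $t^{m_{1}}$ in front is killed'' is slightly misstated---the threshold is $t^{m_{1}+m_{2}-1}$---but in the actual recurrence exactly that power appears, so the argument goes through.)
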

\begin{proof}
At first, let us observe, that for every $k\leq n$ the following equality is true:
\begin{align}\label{EquLemEquiv}
p_{M}(m_{1}n,t)=t^{km_{1}}p_{M}(m_{1}(n-k),t)+\sum_{j=0}^{k-1}t^{jm_{1}}p_{M'}(n-j,t).
\end{align}
Indeed, the case $k=1$ follows immediately from equations \eqref{RecurRelationsMaryPartitions}, and if it is true for some $k$, then for $k+1$ we get:
\begin{align*}
p_{M}(m_{1}n,t)= &\ t^{km_{1}}p_{M}(m_{1}(n-k),t)+\sum_{j=0}^{k-1}t^{jm_{1}}p_{M'}(n-j,t) \\
= &\ t^{km_{1}}\big(t^{m_{1}}p_{M}(m_{1}(n-(k+1)),t)+p_{M'}(n-k,t)\big)+\sum_{j=0}^{k-1}t^{jm_{1}}p_{M'}(n-j,t) \\
= &\ t^{(k+1)m_{1}}p_{M}(m_{1}(n-(k+1)),t)+\sum_{j=0}^{k}t^{jm_{1}}p_{M'}(n-j,t).
\end{align*} 

In particular, for $k=n$ from \eqref{EquLemEquiv} we get:
\begin{equation}\label{EquLemEquality1}
\begin{aligned}
p_{M}(m_{1}n,t)= &\ t^{nm_{1}} p_{M}(0,t)+\sum_{j=0}^{n-1}t^{jm_{1}}p_{M'}(n-j,t) \\
= &\ \sum_{j=0}^{n}t^{jm_{1}}p_{M'}(n-j,t)=\sum_{j=0}^{n}t^{(n-j)m_{1}}p_{M'}(j,t).
\end{aligned}
\end{equation}

If we use \eqref{RecurRelationsMaryPartitions} for $M'$ instead of $M$, we get $p_{M'}(m_{2}n+j,t)=t^{j}p_{M'}(m_{2}n,t)$ for $j\in\{0,\ldots ,m_{2}-1\}$. Hence, for $l\in\{0,\ldots ,k-1\}$ we have
\begin{align*}
\sum_{j=lm_{2}}^{lm_{2}+m_{2}-1}t^{(n-j)m_{1}}p_{M'}(j,t)= &\ \sum_{j=0}^{m_{2}-1}t^{(n-lm_{2}-j)m_{1}}p_{M'}(lm_{2}+j,t) \\
= &\ \sum_{j=0}^{m_{2}-1}t^{(n-lm_{2}-j)m_{1}}\cdot t^{j} p_{M'}(lm_{2},t) \\
= &\ t^{(n-lm_{2})m_{1}}p_{M'}(lm_{2},t)\sum_{j=0}^{m_{2}-1}t^{(1-m_{1})j} \\
= &\ p_{M'}(lm_{2},t)t^{(n-lm_{2})m_{1}}\frac{1-t^{(1-m_{1})m_{2}}}{1-t^{1-m_{1}}} \\
= &\ p_{M'}(lm_{2},t)t^{(n-lm_{2})m_{1}}\cdot \frac{t^{(1-m_{1})m_{2}}}{t^{1-m_{1}}}\cdot\frac{t^{(m_{1}-1)m_{2}}-1}{t^{m_{1}-1}-1} \\
= &\ p_{M'}(lm_{2},t)t^{(n-lm_{2})m_{1}+(1-m_{1})m_{2}-(1-m_{1})}\frac{t^{(m_{1}-1)m_{2}}-1}{t^{m_{1}-1}-1}.
\end{align*}
Observe that
\begin{align*}
(n-lm_{2})m_{1}+(1-m_{1})m_{2}-(1-m_{1})\geq &\ (km_{2}-(k-1)m_{2})m_{1}-m_{1}m_{2}+m_{1}+m_{2}-1 \\
= &\ m_{1}+m_{2}-1,
\end{align*}
and thus
\begin{align}\label{EquLemEquality2}
\sum_{j=lm_{2}}^{lm_{2}+m_{2}-1}t^{(n-j)m_{1}}p_{M'}(j,t)\equiv 0\pmod{t^{m_{1}+m_{2}-1}f(m_{2},t^{m_{1}-1})}.
\end{align}

We perform similar computation in the case of $l=k$ and obtain
\begin{align*}
\sum_{j=km_{2}}^{km_{2}+r}t^{(n-j)m_{1}}p_{M'}(j,t) = &\ \sum_{j=0}^{r}t^{(n-km_{2}-j)m_{1}}p_{M'}(km_{2}+j,t) \\
= &\ t^{rm_{1}}p_{M'}(km_{2},t)\sum_{j=0}^{r}t^{(1-m_{1})j} \\
= & p_{M'}(km_{2},t)t^{rm_{1}}\frac{1-t^{(1-m_{1})(r+1)}}{1-t^{1-m_{1}}} \\
= &\ p_{M'}(km_{2})t^{rm_{1}+(1-m_{1})(r+1)-(1-m_{1})}\frac{t^{(m_{1}-1)(r+1)}-1}{t^{m_{1}-1}-1} \\
= &\ p_{M'}(km_{2},t) t^{r}\frac{t^{(m_{1}-1)(r+1)}-1}{t^{m_{1}-1}-1}=t^{r}f(r+1,t^{m_{1}-1}).
\end{align*}
The statement of our lemma simply follows from \eqref{EquLemEquality1}, \eqref{EquLemEquality2} and the above equality.
\end{proof}

In order to state the main theorem of this section we need to introduce some notation. For a positive integer $n$ let $(n_{0},\ldots ,n_{k})$ and $(a_{0},\ldots ,a_{k})$ be sequences constructed in the following way: 
\begin{align*}
\left\{\begin{array}{ll}
n_{0}:=\left\lfloor\frac{n}{m_{1}}\right\rfloor, & \\
n_{j}:=\left\lfloor\frac{n_{j-1}}{m_{j}}\right\rfloor, &  \textrm{ for } j\geq 1\\
a_{j-1}:=n_{j-1}-m_{j}n_{j}, & \textrm{ for } j\geq 1.
\end{array}\right.
\end{align*}
In other words, numbers $a_{j}$ can be seen as (unique) coefficients in the expansion
\begin{align*}
n=a_{0}+m_{1}(a_{1}+m_{2}(a_{2}+\cdots )))=a_{0}+a_{1}M_{1}+a_{2}M_{2}+\cdots +a_{k}M_{k},
\end{align*}
where $a_{j}\in \{0,\ldots ,m_{j+1}-1\}$. We call the above representation an $M$-ary representation of $n$. Let us prove, that this representation is indeed unique.

\begin{prop}\label{M-aryRepresentation}
For every $M$ and $n$ the coefficients in the expansion
\begin{align*}
n=a_{0}+a_{1}M_{1}+\cdots +a_{k}M_{k}
\end{align*}
are unique.
\end{prop}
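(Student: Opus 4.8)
The plan is to prove uniqueness by induction on $n$, allowing the sequence $M$ to vary, and exploiting the relation $M_{j}=m_{1}\cdots m_{j}$ (valid since $m_{0}=1$) together with the passage to the shifted sequence $M'=(1,m_{2},m_{3},\ldots)$ already used throughout the paper. First I would dispose of the base case $n=0$, where every admissible digit must vanish. For the inductive step, fix $n\geq 1$, assume the statement holds for every sequence and every natural number smaller than $n$, and suppose $n=a_{0}+a_{1}M_{1}+\cdots+a_{k}M_{k}$ with $a_{j}\in\{0,\ldots,m_{j+1}-1\}$.

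Next I would pin down $a_{0}$. Since $m_{0}=1$ we have $M_{1}=m_{1}$ and, for every $j\geq 1$, $m_{1}\mid M_{j}$; hence reducing the expansion modulo $m_{1}$ gives $n\equiv a_{0}\pmod{m_{1}}$. Because $0\leq a_{0}\leq m_{1}-1$, this forces $a_{0}$ to be the unique remainder of $n$ upon division by $m_{1}$, that is, $a_{0}=n-m_{1}\lfloor n/m_{1}\rfloor$.

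Then I would reduce to the shifted sequence. Writing $M'_{i}=\prod_{j=0}^{i}m'_{j}$ for the products attached to $M'$ (so $m'_{0}=1$ and $m'_{j}=m_{j+1}$ for $j\geq 1$), one checks $M_{j}/M_{1}=m_{2}\cdots m_{j}=M'_{j-1}$ for $j\geq 1$. Dividing $n-a_{0}$ by $M_{1}=m_{1}$ therefore yields
\begin{align*}
\left\lfloor\frac{n}{m_{1}}\right\rfloor=\frac{n-a_{0}}{m_{1}}=\sum_{i=0}^{k-1}a_{i+1}M'_{i},
\end{align*}
and the constraints $a_{i+1}\in\{0,\ldots,m_{i+2}-1\}=\{0,\ldots,m'_{i+1}-1\}$ are exactly the admissibility conditions for an $M'$-ary representation. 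Since $m_{1}\geq 2$ forces $\lfloor n/m_{1}\rfloor<n$, the induction hypothesis applied to the sequence $M'$ and this smaller number shows that $a_{1},\ldots,a_{k}$ are uniquely determined; together with the determination of $a_{0}$ this gives uniqueness for $n$.

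The main point to handle carefully is the bookkeeping of the reindexing $M_{j}/M_{1}=M'_{j-1}$ and the matching of the digit ranges $\{0,\ldots,m_{j+1}-1\}$ under the shift, so that the reduced expansion is genuinely a valid $M'$-ary representation to which the induction hypothesis applies; this is precisely where the hypothesis $m_{0}=1$ (giving $M_{1}=m_{1}$) is essential. An alternative, non-inductive route pins down the digits directly through the telescoping identity $\sum_{j=0}^{i-1}(m_{j+1}-1)M_{j}=M_{i}-1$, which follows from $M_{j+1}=m_{j+1}M_{j}$; this guarantees $\sum_{j<i}a_{j}M_{j}<M_{i}$, so that $\sum_{j<i}a_{j}M_{j}$ is the unique residue of $n$ modulo $M_{i}$, and comparing two representations at the least index where they differ forces them to coincide.
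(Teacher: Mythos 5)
Your argument is correct and follows essentially the same route as the paper's own proof: induction on $n$ over all admissible sequences $M$, determination of $a_{0}$ by reduction modulo $m_{1}$, and passage to the shifted sequence $M'$ applied to $(n-a_{0})/m_{1}$. Your bookkeeping of the reindexing $M_{j}/M_{1}=M'_{j-1}$ is in fact slightly more careful than the paper's, and the alternative telescoping argument you sketch is a valid (and standard) non-inductive variant.
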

\begin{proof}
We use induction on $n$. For every sequence $M$, if $0\leq n\leq m_{1}-1$, then the representation is obviously unique. Let us assume, that the representation is unique for all numbers less than some $n$ and all sequences $M$, and suppose that
\begin{align*}
n=a_{0}+a_{1}M_{1}+\cdots +a_{k}M_{k}=b_{0}+b_{1}M_{1}+\cdots +b_{l}M_{l}.
\end{align*}
All numbers $M_{j}$ for $j\geq 1$ are divisible by $m_{1}$. Therefore, the difference $a_{0}-b_{0}$ has to be divisible by $m_{1}$. But $-(m_{1}-1)\leq a_{0}-b_{0}\leq m_{1}-1$ so $a_{0}=b_{0}$. Hence we get
\begin{align*}
\frac{n-a_{0}}{m_{1}}=a_{1}M'_{1}+\cdots +a_{k}M'_{k}=b_{1}M'_{1}+\cdots +b_{l}M'_{l}.
\end{align*}
Now we can use the induction hypothesis to the number $(n-a_{0})/m_{1}$ and get $k=l$ and $a_{j}=b_{j}$ for all positive $j$'s. Hence, both of the above representations are equal. Therefore the number $n$ has only one representation. The proof is finished.
\end{proof}

%We defined the numbers $a_{j}$ uniquely by the above recurrence relation, but expression of the form $a_{0}+a_{1}M_{1}+a_{2}M_{2}+\cdots +a_{k}M_{k}$ with $a_{j}\in \{0,\ldots ,m_{j+1}-1\}$ may not be unique. Indeed, let us consider $M=(1,3,2)$. Then $M_{0}=1$, $M_{2}=3$ and $M_{3}=6$ and we can write number $7$ as
%\begin{align*}
%7=1+2\cdot 3 =1+1\cdot 6.
%\end{align*}
%It seems that the representation is unique if the sequence $M$ is non-decreasing.

For two monic polynomials $f_{1},f_{2}\in\mathbb{Z}[x]$ let $\gcd (f_{1},f_{2})$ be a polynomial of the highest possible degree that divides both $f_{1}$ and $f_{2}$. For polynomials $f_{1},\ldots ,f_{k}$ we define their greatest common divisor inductively as
\begin{align*}
\gcd (f_{1},\ldots ,f_{k}):=\gcd (\gcd (f_{1},\ldots ,f_{k-1}),f_{k}).
\end{align*}

We are ready to state the main theorem of this section of the paper.

\begin{theorem}\label{TheoremCharacterisationModulo}
For a positive integer $h$ let us define
\begin{align*}
g_{h}(t):=\gcd \big(t^{m_{1}+m_{2}-1}f(m_{2},t^{m_{1}-1}), \ldots ,t^{m_{h}+m_{h+1}-1}f(m_{h+1},t^{m_{h}-1})\big).
\end{align*}
Let $n\in\mathbb{N}$ and $n=a_{0}+a_{1}M_{1}+\cdots +a_{k}M_{k}$ be its $M$-ary representation. Then
\begin{align*}
p_{M}(n,t)\equiv t^{a_{0}}\prod_{j=1}^{k}t^{a_{j}}f(a_{j}+1,t^{m_{j}-1}) \pmod{ g_{k}(t)} .
\end{align*}
\end{theorem}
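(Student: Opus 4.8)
The plan is to prove the statement by induction on $k$, the index of the leading digit in the $M$-ary representation of $n$, carried out simultaneously over all sequences $M$ of the prescribed form. Throughout I would abbreviate $Q_{j}(t):=t^{m_{j}+m_{j+1}-1}f(m_{j+1},t^{m_{j}-1})$, so that $g_{h}(t)=\gcd(Q_{1},\ldots ,Q_{h})$, and I would repeatedly use two elementary facts: a congruence modulo a polynomial remains valid modulo any of its divisors, and congruences are preserved under multiplication by a fixed polynomial.

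First I would record the arithmetic underlying the reduction. Writing $n=a_{0}+m_{1}N$ with $N=a_{1}+a_{2}m_{2}+\cdots +a_{k}m_{2}\cdots m_{k}$, the uniqueness in Proposition \ref{M-aryRepresentation} shows that this is precisely the $M'$-ary representation of $N$ (with leading index $k-1$), where $m'_{j}=m_{j+1}$ and $M'_{j}=m_{2}\cdots m_{j+1}$. Likewise $k':=\lfloor N/m_{2}\rfloor$ has $M'$-ary representation $0+a_{2}M'_{1}+\cdots +a_{k}M'_{k-1}$, again of leading index $k-1$. Iterating the first line of \eqref{RecurRelationsMaryPartitions} exactly $a_{0}$ times gives the identity $p_{M}(n,t)=t^{a_{0}}p_{M}(m_{1}N,t)$, which peels off the factor $t^{a_{0}}$ and reduces the whole problem to analysing $p_{M}(m_{1}N,t)$.

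For the base case $k=1$ we have $N=a_{1}\in\{0,\ldots ,m_{2}-1\}$, so Lemma \ref{LemCharacterisationModuloM} (with $r=a_{1}$, $k'=0$) together with $p_{M'}(0,t)=1$ yields $p_{M}(m_{1}N,t)\equiv t^{a_{1}}f(a_{1}+1,t^{m_{1}-1})\pmod{g_{1}(t)}$, which is the claim after multiplying by $t^{a_{0}}$. For the inductive step $k\geq 2$, I would apply Lemma \ref{LemCharacterisationModuloM} to $p_{M}(m_{1}N,t)$ with $N=k'm_{2}+a_{1}$ to get a congruence modulo $Q_{1}(t)$; since $g_{k}\mid Q_{1}$, this descends to $p_{M}(m_{1}N,t)\equiv t^{a_{1}}f(a_{1}+1,t^{m_{1}-1})\,p_{M'}(k'm_{2},t)\pmod{g_{k}(t)}$. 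Next I would feed $k'm_{2}$, of leading index $k-1$, into the induction hypothesis for the sequence $M'$, obtaining $p_{M'}(k'm_{2},t)\equiv \prod_{j=2}^{k}t^{a_{j}}f(a_{j}+1,t^{m_{j}-1})$ modulo $g'_{k-1}(t)=\gcd(Q_{2},\ldots ,Q_{k})$; since $g_{k}=\gcd(Q_{1},g'_{k-1})$ divides $g'_{k-1}$, this congruence also holds modulo $g_{k}$. Substituting and multiplying by $t^{a_{0}}$ then assembles the two congruences into the desired product formula.

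I expect the only genuine obstacle to be bookkeeping rather than anything conceptual. One must check carefully that deleting $m_{1}$ from $M$ converts the tail of the $M$-ary expansion of $n$ into a bona fide $M'$-ary expansion, so that the induction hypothesis is literally applicable after the reindexing $j\mapsto j-1$, $m'_{j}=m_{j+1}$, $Q_{i+1}=t^{m'_{i}+m'_{i+1}-1}f(m'_{i+1},t^{m'_{i}-1})$. One must also verify that the two moduli arising — the single polynomial $Q_{1}$ from the lemma and the gcd $g'_{k-1}$ from the induction hypothesis — are both multiples of $g_{k}$, so that both congruences can be transferred to the common modulus $g_{k}$. Once these indexing and divisibility facts are secured, the merging of the lemma with the induction hypothesis is routine.
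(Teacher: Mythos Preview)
Your proposal is correct and follows essentially the same route as the paper: induction on the leading index $k$ simultaneously over all admissible sequences $M$, peeling off $t^{a_{0}}$ via the first relation in \eqref{RecurRelationsMaryPartitions}, invoking Lemma~\ref{LemCharacterisationModuloM} to extract the factor $t^{a_{1}}f(a_{1}+1,t^{m_{1}-1})$, and then applying the induction hypothesis to the shifted sequence $M'$. Your write-up is in fact more explicit than the paper's about the two divisibilities $g_{k}\mid Q_{1}$ and $g_{k}\mid g'_{k-1}$ that justify merging the congruences into a common modulus, and your choice of $k=1$ as the base case sidesteps the awkwardness of $g_{0}$ being undefined.
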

\begin{proof}
We use an induction on $k$. For $k=0$, that is, if $n=a_{0}$, the statement follows from \eqref{RecurRelationsMaryPartitions}. Assume that it is true for all sequences $M$ and all natural numbers with at most $k$ coefficients in their $M$-ary representations. Let $n=a_{0}+a_{1}M_{1}+\cdots +a_{k}M_{k}$, that is, $n$ has $k+1$ coefficients. Let $n_{1}$ and $n_{2}$ be such that $n=n_{1}m_{1}+a_{0}$ and $n_{1}=n_{2}m_{2}+a_{1}$. Then Lemma \ref{LemCharacterisationModuloM} and the induction hypothesis (used for $M'$ instead of $M$ and $n_{1}$ instead of $n$) implies
\begin{align*}
p_{M}(n,t)= &\ p_{M}(n_{1}m_{1}+a_{0},t)=t^{a_{0}}p_{M}(n_{1}m_{1},t)\equiv t^{a_{0}}\cdot t^{a_{1}}f(a_{1}+1,t^{m_{1}-1})p_{M'}(n_{2}m_{2},t) \\
\equiv &\ t^{a_{0}}\cdot t^{a_{1}}f(a_{1}+1,t^{m_{1}-1}) \cdot \prod_{j=2}^{k}t^{a_{j}}f(a_{j}+1,t^{m_{j}-1}) \pmod{g_{k}(t)}.
\end{align*}
This finishes the proof.
\end{proof}

The above theorem has interesting consequences. We list some of them.

\begin{cor}\label{CorCharModm}
Let $n=n_{0}+n_{1}m+\cdots +n_{s}m^{s}$ be the representation of $n$ in base $m$. For a natural number $k$ let us consider the polynomial
\begin{align*}
    f_{k}(t):=\frac{t^{(m-1)k}-1}{t^{m-1}-1}.
\end{align*}
Then
\begin{align*}
    p_{m}(n,t)\equiv t^{n_{0}}\prod_{j=1}^{s}t^{n_{j}}f_{n_{j}+1}(t)\pmod{t^{2m-1}f_{m}(t)}.
\end{align*}
\end{cor}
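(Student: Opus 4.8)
The plan is to derive Corollary~\ref{CorCharModm} as a direct specialisation of Theorem~\ref{TheoremCharacterisationModulo} to the constant sequence $M=(1,m,m,m,\ldots)$, matching up the notation of the two statements. First I would observe that in this case $m_j=m$ for all $j\geq 1$, so that $M_j=m^j$ and the $M$-ary representation $n=a_0+a_1M_1+\cdots+a_kM_k$ from Proposition~\ref{M-aryRepresentation} is precisely the ordinary base-$m$ representation $n=n_0+n_1m+\cdots+n_sm^s$. Hence $a_j=n_j$ for every $j$ and $k=s$. This reduces the corollary to a matter of unwinding the definitions of $f$, $f_k$ and $g_k$ in the constant-sequence setting.

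Next I would translate the product on the right-hand side. In Theorem~\ref{TheoremCharacterisationModulo} the factor attached to index $j$ is $t^{a_j}f(a_j+1,t^{m_j-1})$. Since $f(a,t)=\frac{t^a-1}{t-1}$ and $m_j=m$, we have
\begin{align*}
f(n_j+1,t^{m-1})=\frac{t^{(m-1)(n_j+1)}-1}{t^{m-1}-1}=f_{n_j+1}(t),
\end{align*}
using the definition $f_k(t)=\frac{t^{(m-1)k}-1}{t^{m-1}-1}$ stated in the corollary. So the product $\prod_{j=1}^{k}t^{a_j}f(a_j+1,t^{m_j-1})$ becomes exactly $\prod_{j=1}^{s}t^{n_j}f_{n_j+1}(t)$, and the leading factor $t^{a_0}$ becomes $t^{n_0}$. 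This confirms that the congruence asserted in the corollary has the correct right-hand side.

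It then remains to identify the modulus. Here I would compute $g_k(t)$ when every $m_i=m$. Each generator in the $\gcd$ has the form $t^{m_i+m_{i+1}-1}f(m_{i+1},t^{m_i-1})$, which for the constant sequence is the single polynomial $t^{2m-1}f(m,t^{m-1})=t^{2m-1}f_m(t)$, independent of the index $i$. Thus every generator in the list defining $g_k(t)$ coincides, and the $\gcd$ of identical polynomials is that polynomial itself, giving $g_k(t)=t^{2m-1}f_m(t)$. Substituting this modulus into Theorem~\ref{TheoremCharacterisationModulo} yields precisely the statement of the corollary.

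The proof is essentially bookkeeping, so there is no serious obstacle; the only point that requires a moment's care is the computation of $g_k(t)$, where one must notice that for the constant sequence all generators of the greatest common divisor are literally the same polynomial, so that no nontrivial cancellation or degree reduction occurs and the modulus simplifies to $t^{2m-1}f_m(t)$. Once that is checked, the three identifications—representation, product, and modulus—combine to give the result immediately.
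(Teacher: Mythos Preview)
Your proposal is correct and follows exactly the paper's approach: the paper's proof consists of the single sentence ``Put $M=(1,m,m,\ldots)$ in Theorem~\ref{TheoremCharacterisationModulo},'' and you have simply spelled out the bookkeeping behind that specialisation (matching the representation, the product, and the modulus $g_k(t)=t^{2m-1}f_m(t)$).
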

\begin{proof}
Put $M=(1,m,m,\ldots )$ in Theorem \ref{TheoremCharacterisationModulo}.
\end{proof}

\begin{cor}\label{CorCharModg}
Let $n=a_{0}+a_{1}M_{1}+\cdots +a_{k}M_{k}$ be the representation of $n$ in base $M$. Then
\begin{align*}
p_{M}(n)\equiv \prod_{j=1}^{k}(a_{j}+1) \pmod{\gcd (m_{1},\ldots ,m_{k})}.
\end{align*}
\end{cor}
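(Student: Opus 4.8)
Corollary \ref{CorCharModg} asks us to reduce the polynomial congruence from Theorem \ref{TheoremCharacterisationModulo} to an integer congruence modulo $\gcd(m_1,\ldots,m_k)$. My plan is to evaluate the polynomial congruence at $t=1$ and carefully track what happens to the modulus $g_k(t)$ under this specialization.

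First I would evaluate both sides of the congruence in Theorem \ref{TheoremCharacterisationModulo} at $t=1$. The left side becomes $p_M(n,1)=p_M(n)$ by the combinatorial interpretation in the introduction. For the right side, note that $f(a,t)=\frac{t^a-1}{t-1}$ satisfies $f(a,1)=a$ (it is the polynomial $1+t+\cdots+t^{a-1}$), so $f(a_j+1,t^{m_j-1})$ evaluates to $a_j+1$ at $t=1$, and each $t^{a_j}$ becomes $1$. Hence the right side collapses to $\prod_{j=1}^k(a_j+1)$, exactly matching the desired formula. The content of the corollary is therefore entirely about the modulus: I must show that evaluating the congruence at $t=1$ yields a valid integer congruence modulo $\gcd(m_1,\ldots,m_k)$.

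The key step is to understand the specialization of $g_k(t)$ at $t=1$, and this is where the main obstacle lies. Each generator of $g_k(t)$ has the form $t^{m_j+m_{j+1}-1}f(m_{j+1},t^{m_j-1})$, and at $t=1$ the factor $f(m_{j+1},t^{m_j-1})$ evaluates to $m_{j+1}$ while the monomial prefactor evaluates to $1$, giving the value $m_{j+1}$. The plan is to show that if $h(t)\mid g_k(t)$ with $h$ the gcd, then $h(1)$ divides each generator's value, so that $h(1)$ divides $\gcd(m_2,\ldots,m_{k+1})$; conversely I need $h(1)$ to actually equal (or at least be a multiple of) this gcd so that reducing modulo $h(t)$ and then setting $t=1$ recovers reduction modulo the integer gcd. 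The delicate point is that polynomial gcd and evaluation at a point do not commute in general: $\gcd(f_1,f_2)(1)$ need not equal $\gcd(f_1(1),f_2(1))$. I would therefore argue more directly: since $t-1$ divides $f(m_{j+1},t^{m_j-1})-m_{j+1}$ (as both sides agree at $t=1$), and more usefully, since $a(t)\equiv b(t)\pmod{g_k(t)}$ means $g_k(t)\mid a(t)-b(t)$, substituting $t=1$ gives $g_k(1)\mid a(1)-b(1)$ as integers. So the immediate consequence is $p_M(n)\equiv\prod(a_j+1)\pmod{g_k(1)}$.

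It then remains to reconcile $g_k(1)$ with $\gcd(m_1,\ldots,m_k)$. I expect that $g_k(1)$ is a multiple of $\gcd(m_2,\ldots,m_{k+1})$, but the corollary states $\gcd(m_1,\ldots,m_k)$, so there may be an index shift to reconcile or an implicit claim that $\gcd(m_1,\ldots,m_k)\mid g_k(1)$; the cleanest route is to show the stated integer modulus divides $g_k(1)$, since a congruence modulo $g_k(1)$ then implies the weaker congruence modulo any divisor. The main obstacle is thus verifying the divisibility relation between $g_k(1)$ and $\gcd(m_1,\ldots,m_k)$ precisely; I would handle this by noting that each $m_{j+1}$ divides the corresponding generator's value at $t=1$, hence divides $g_k(1)$ only if one argues at the level of a common divisor of the polynomials, which requires checking that a suitable integer combination realizing the gcd survives evaluation. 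Once this divisibility is pinned down, the corollary follows immediately by reducing the $t=1$ congruence modulo the integer gcd.
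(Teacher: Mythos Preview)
Your overall plan—evaluate the congruence of Theorem~\ref{TheoremCharacterisationModulo} at $t=1$—is exactly the paper's one-line proof, and you are right that the entire content lies in tracking the modulus. But the route through $g_k(1)$ does not close: you expect $g_k(1)$ to be a \emph{multiple} of $\gcd(m_2,\ldots,m_{k+1})$, whereas in fact $g_k(t)\mid P_j(t)$ forces $g_k(1)\mid P_j(1)=m_{j+1}$, so $g_k(1)$ is only a \emph{divisor} of that gcd, and strictly so in general. For instance with $m_1=3$, $m_2=m_3=4$ one has $P_1(t)=t^{6}(1+t^{2}+t^{4}+t^{6})$ and $P_2(t)=t^{7}(1+t^{3}+t^{6}+t^{9})$, whose monic gcd is $t^{6}(t^{2}+1)$; thus $g_2(1)=2$ while $\gcd(m_2,m_3)=4$. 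Hence the congruence modulo $g_k(1)$ obtained by naive substitution is too weak, and no divisibility argument can recover the stated modulus from it.

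The fix is the one you gesture at in your final sentence: bypass the polynomial gcd entirely. Unwinding the induction in Theorem~\ref{TheoremCharacterisationModulo} (via Lemma~\ref{LemCharacterisationModuloM}) actually proves the stronger statement that $p_M(n,t)-t^{a_0}\prod_{j=1}^{k} t^{a_j}f(a_j+1,t^{m_j-1})$ lies in the ideal $(P_1,\ldots,P_k)\subset\mathbb{Z}[t]$, i.e.\ is an explicit $\mathbb{Z}[t]$-linear combination of the generators. Evaluating that identity at $t=1$ lands in the ideal $(m_2,\ldots,m_{k+1})\subset\mathbb{Z}$, which yields the congruence modulo $\gcd(m_2,\ldots,m_{k+1})$ directly. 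Your index concern is therefore also justified: the modulus that falls out naturally is $\gcd(m_2,\ldots,m_{k+1})$ rather than the printed $\gcd(m_1,\ldots,m_k)$; the two coincide in the constant $m$-ary case, which is presumably what the paper has in mind.
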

\begin{proof}
Put $t\to 1$ in Theorem \ref{TheoremCharacterisationModulo}.
\end{proof}

At the end note, that the statement of Corollary \ref{CorCharModm} looks especially elegant if we use the polynomials $P_{m}(n,t)$ introduced in Theorem \ref{ThmPolynomialP}, and some notation from so-called quantum calculus. Roughly speaking, quantum calculus is a theory in which we replace classical definition of derivative of a function $f(x)$ by its $q$-analogue: 
\begin{align*}
D_{q}f(x):=\frac{f(qx)-f(x)}{(q-1)x}.
\end{align*}

In quantum calculus, the role of positive integers $n$ is played by the following polynomials in~$q$:
\begin{align*}
[n]_{q}:=\frac{q^{n}-1}{q-1}.
\end{align*}
Indeed, we have for example that then $D_{q}(x^{n})=[n]_{q}x^{n-1}$. For more details see \cite{ChK}.

With the above notation in mind, we can rephrase Corollary \ref{CorCharModm} in the following very elegant way.

\begin{cor}
Let $n=a_{1}+a_{2}m+\cdots +a_{k}m^{k-1}$. Then
\begin{align*}
P_{m}(n,q)\equiv \prod_{j=1}^{k}[a_{j}+1]_{q} \pmod{[m]_{q}}.
\end{align*}
\end{cor}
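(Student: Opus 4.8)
The plan is to deduce this final corollary directly from Corollary \ref{CorCharModm} by translating that statement through the substitution that defines $P_{m}(n,q)$. Recall from Theorem \ref{ThmPolynomialP} that $\widetilde{p_{m}}(n,t)=t^{-s_{m}(n)}p_{m}(n,t)$ and that $P_{m}(n,q)=\widetilde{p_{m}}(mn,q^{1/(m-1)})$. The first step is therefore to rewrite Corollary \ref{CorCharModm} for the argument $mn$ in place of $n$ and then divide through by the appropriate power of $t$ to pass from $p_{m}$ to $\widetilde{p_{m}}$.

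Concretely, if $n=a_{1}+a_{2}m+\cdots +a_{k}m^{k-1}$, then $mn=0+a_{1}m+a_{2}m^{2}+\cdots+a_{k}m^{k}$ has base-$m$ digits $n_{0}=0$ and $n_{j}=a_{j}$ for $j\geq 1$, so $s_{m}(mn)=a_{1}+\cdots+a_{k}$. Applying Corollary \ref{CorCharModm} to $mn$ gives
\begin{align*}
p_{m}(mn,t)\equiv \prod_{j=1}^{k}t^{a_{j}}f_{a_{j}+1}(t)\pmod{t^{2m-1}f_{m}(t)},
\end{align*}
where $f_{a_{j}+1}(t)=\dfrac{t^{(m-1)(a_{j}+1)}-1}{t^{m-1}-1}$. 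I would next check that each $f_{a_{j}+1}(t)$, and also the modulus factor $f_{m}(t)$, is already a polynomial in $t^{m-1}$, so that after dividing both sides by $t^{s_{m}(mn)}=\prod_j t^{a_j}$ the congruence becomes a relation among polynomials in $t^{m-1}$, namely $\widetilde{p_{m}}(mn,t)\equiv\prod_{j=1}^{k}f_{a_{j}+1}(t)\pmod{f_{m}(t)}$ after cancelling the common factor $t^{2m-1}$ on the right (one must verify the power of $t$ dividing the modulus does not interfere, which follows since $t$ is a unit modulo $f_{m}(t)$ as $f_{m}(0)=1$).

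The final step is the substitution $t\mapsto q^{1/(m-1)}$, which sends $t^{m-1}\mapsto q$. Under this map $f_{a_{j}+1}(t)=\dfrac{t^{(m-1)(a_{j}+1)}-1}{t^{m-1}-1}\mapsto\dfrac{q^{a_{j}+1}-1}{q-1}=[a_{j}+1]_{q}$, and similarly $f_{m}(t)\mapsto\dfrac{q^{m}-1}{q-1}=[m]_{q}$, while the left side becomes $P_{m}(n,q)$ by definition. This yields exactly $P_{m}(n,q)\equiv\prod_{j=1}^{k}[a_{j}+1]_{q}\pmod{[m]_{q}}$.

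The main obstacle I anticipate is bookkeeping the powers of $t$ carefully when passing from the congruence for $p_{m}$ to one for $\widetilde{p_{m}}$: the modulus $t^{2m-1}f_{m}(t)$ carries a genuine factor $t^{2m-1}$, and one must justify that dividing by $t^{s_{m}(mn)}$ and stripping this $t$-power is legitimate and produces the clean modulus $f_{m}(t)$. The key point making this routine rather than delicate is that $t$ (equivalently $q$) is invertible modulo $f_{m}(t)$ since $f_{m}(0)=1$, so multiplying or dividing a congruence by a power of $t$ preserves it; once this is noted, the rest is the formal substitution $t^{m-1}=q$.
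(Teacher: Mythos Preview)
Your approach is correct and is precisely the rephrasing the paper intends: the paper gives no proof of this corollary, stating only that ``we can rephrase Corollary \ref{CorCharModm}'' in quantum notation, and your argument supplies exactly the details of that rephrasing. One small clarification of the bookkeeping you flag: the cleanest order is first to weaken the modulus from $t^{2m-1}f_{m}(t)$ to $f_{m}(t)$ (since the latter divides the former), and only then use that $t$ is a unit modulo $f_{m}(t)$ to cancel $t^{s_{m}(mn)}$; after that, the passage from $\mathbb{Z}[t]$ to $\mathbb{Z}[t^{m-1}]$ is harmless because $\widetilde{p_{m}}(mn,t)$, each $f_{a_{j}+1}(t)$, and $f_{m}(t)$ all lie in $\mathbb{Z}[t^{m-1}]$ by Theorem \ref{ThmPolynomialP}, so the divisibility descends.
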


In other words, our result may be seen as a quantum generalisation of the characterisation \eqref{Chacterisationmodm} of the sequence of $m$-ary partitions modulo $m$.

\section{Coefficients of \texorpdfstring{$M$}{TEXT}-ary partition polynomials}

We devote the next part of this paper to study of the coefficients of polynomials $p_{M}(n,t)$. Let us write
\begin{align*}
p_{M}(n,t)=\sum_{j=0}^{n}a_{M}(j,n)t^{j}.
\end{align*}
Recall that then $a_{M}(j,n)$ is equal to the number of $M$-ary partitions of $n$ with exactly $j$ parts.

From relations \eqref{RecurRelationsMaryPartitions} we have
\begin{align}\label{EquRelationaM}
a_{M}(j,n)=a_{M}(j-1,n-1)+a_{M'}(j,n/m_{1}),
\end{align}
where we use a convention that $a_{M}(i,q)=0$ if $i>q$ or $q\not\in\mathbb{N}$. 

Our aim is to generalize results from \cite{BZ2}, where the case of binary partition polynomials (that is, $m=2$) was considered. We will study two types of sequences. At first, we will narrow aour considerations down to the $m$-ary case and focus on the sequence $(a_{M}(k,n))_{n=0}^{\infty}$. This is the sequence of coefficients of $t^{k}$ of consecutive $m$-ary partition polynomials, where $k$ is a fixed natural number. In particular, we will prove that for each $k$ this sequence is bounded. It is important to note that in \cite{BZ2} the following bound has been showed in the case of binary partition polynomials: $a_{2}(j,n)\leq j!$ for all $n$. However, there was a mistake in the proof. Fortunately, we use a bit different approach here and prove a better bound in Theorem \ref{Thma_m(k,n)UpperBound} below. More details in the case of $m=2$ are provided in the remark after Theorem \ref{Thma_m(k,n)UpperBound}. % We also describe how to obtain a much better bound in the remark after Theorem \ref{Thma_m(k,n)UpperBound}.

The second sequence that will be interesting for us is $(a_{M}(n-k,n))_{n=0}^{\infty}$, that is, sequence of coefficients of $t^{n-k}$ in polynomials $p_{M}(n,t)$, where $k$ is again a fixed positive integer. We will prove that this sequence is eventually constant and relate this constant value to another kind of partitions, that is, partitions with parts of the form $M_{j}-1$. This relation was found in the special case when $M_{j}=2^{j}$ for all $j\geq 1$ in \cite{BZ2}. The sequence counting partitions into parts of the form $2^{j}-1$ is known as a sequence of $s$-partitions and was introduced by Bhatt in \cite{Bhatt}. He used some properties of this sequence in his algorithm that computes expressions of the form $a^{n}\mod{m}$. Another properties of the sequence of $s$-partitions were studied in \cite{Goh}.

Let us begin with two lemmas. Lemma \ref{Lema_m(j,n)=0} is especially interesting because it implies that for every $j$, there are infinitely many zeroes in the sequence $(a_{m}(j,n))_{n=0}^{\infty}$.

\begin{lem}\label{Lema_m(j,n)=0}
Let $j$, $r\geq 1$, and let $m\nmid n$. If $a_{m}(j,m^{r}n-1)\geq 1$, then $j\geq (m-1)r$.
\end{lem}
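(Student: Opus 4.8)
The plan is to prove the lemma by induction on $r$, exploiting the basic recurrence \eqref{EquRelationaM} for the coefficients, which in the $m$-ary case reads $a_{m}(j,n)=a_{m}(j-1,n-1)+a_{m}(j,n/m)$ (with the convention that the second term vanishes unless $m\mid n$). The key combinatorial observation is that if we want a nonzero coefficient $a_{m}(j,m^{r}n-1)$, we are asking for an $m$-ary partition of $m^{r}n-1$ into exactly $j$ parts, and the number $m^{r}n-1$ has a very rigid base-$m$ expansion: its last $r$ digits are all equal to $m-1$. This digit structure is precisely what forces $j$ to be large, so I expect the digit-sum estimate from the proof of Theorem \ref{ThmRootsPm} to be the engine of the argument.

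First I would set up the induction by handling $r=1$ directly: I want to show that if $m\nmid n$ and $a_{m}(j,mn-1)\geq 1$, then $j\geq m-1$. Here I would use \eqref{EquRelationaM} to write $a_{m}(j,mn-1)=a_{m}(j-1,mn-2)+a_{m}(j,(mn-1)/m)$, and since $m\nmid(mn-1)$ the second term is zero; iterating this down through the residues $mn-1,mn-2,\ldots,m(n-1)$ peels off one factor of $t$ (equivalently, decreases $j$ by one) at each step until we reach a multiple of $m$, namely $m(n-1)$. Since there are $m-1$ such steps, a nonzero contribution requires $j\geq m-1$ together with a nonzero coefficient $a_{m}(j-(m-1),n-1)$ for the partition of $n-1$. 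This gives the base case and, importantly, also reveals the mechanism: each block of $m$ consecutive integers below a multiple of $m$ costs $m-1$ in the part count.

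For the inductive step, assuming the statement for $r-1$, I would again apply the peeling argument to reduce $a_{m}(j,m^{r}n-1)$ across the $m-1$ residues down to $a_{m}(j-(m-1),m^{r-1}\cdot m n - m) = a_{m}(j-(m-1), m(m^{r-1}n-1))$, and then use \eqref{EquRelationaM} once more at this multiple of $m$ to pass to $a_{m}(j-(m-1),m^{r-1}n-1)$. Since $m\nmid n$ we have $m\nmid(m^{r-1}n)$ only when $r=1$, so I must be careful: for $r\geq 2$ the number $m^{r-1}n-1$ is again one less than a multiple of $m$ (indeed $m^{r-1}n$ is divisible by $m$), so it has the same form with $r$ replaced by $r-1$ and the same $n$. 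Applying the induction hypothesis yields $j-(m-1)\geq (m-1)(r-1)$, hence $j\geq (m-1)r$, as desired.

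The main obstacle I anticipate is bookkeeping the interaction between the two terms in the recurrence \eqref{EquRelationaM} when we land on a multiple of $m$: at such a point the coefficient splits into a contribution that continues peeling single powers of $t$ and a contribution coming from the ``contracted'' partition of $n/m$. I will need to argue that any nonzero value of $a_{m}(j,m^{r}n-1)$ forces a nonzero value somewhere down the chain that still has the required excess in the number of parts, rather than the two branches conspiring to allow a small $j$. The clean way to do this is to track, for each integer of the form $m^{s}n-1$ encountered, the minimal admissible number of parts, and show it increases by exactly $m-1$ each time we descend one power of $m$; this monotonicity is what ultimately yields the bound $j\geq (m-1)r$ and is where the digit-expansion rigidity of $m^{r}n-1$ does the real work.
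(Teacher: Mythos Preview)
Your inductive scheme has a genuine gap at the branching step. After peeling the $m-1$ non-multiples you correctly obtain the \emph{equality}
\[
a_{m}(j,m^{r}n-1)=a_{m}\bigl(j-(m-1),\,m(m^{r-1}n-1)\bigr),
\]
but at this multiple of $m$ the recurrence \eqref{EquRelationaM} splits the right-hand side into two nonnegative summands, and positivity only guarantees that \emph{one} of them is positive. If it is $a_{m}(j-(m-1),m^{r-1}n-1)$ the induction closes; but if instead $a_{m}(j-m,\,m(m^{r-1}n-1)-1)=a_{m}(j-m,\,m^{r}n-m-1)\ge 1$, you are no longer at an integer of the shape $m^{s}n-1$. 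Continuing to peel and branch from there leads to $m^{r-1}n-2,\,m^{r-1}n-3,\ldots$, none of which are of the form $m^{r'}n'-1$ with $r'\ge 1$ and $m\nmid n'$, so your induction hypothesis gives nothing. Your proposed fix of tracking minimal part-counts only at integers of the form $m^{s}n-1$ therefore does not close the gap.

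The cleanest repair is the one you already hint at: invoke Theorem~\ref{ThmRootsPm} directly and skip the induction on $r$. That theorem says $a_{m}(j,N)\ge 1$ forces $j\ge s_{m}(N)$; since $m\nmid n$ one computes $s_{m}(m^{r}n-1)=s_{m}(n)-1+(m-1)r\ge (m-1)r$, and the lemma follows in one line. The paper's own proof takes a third, self-contained route: given a partition of $m^{r}n-1$ into $j$ powers of $m$, it compresses (replace any $m$ equal parts by one larger part) until each power occurs at most $m-1$ times, then reduces the resulting equality modulo $m^{r}$ and observes that the low-order contribution must equal exactly $m^{r}-1=(m-1)(1+m+\cdots+m^{r-1})$, which forces $(m-1)r$ small parts. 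Both valid arguments exploit the base-$m$ rigidity you identified, but the paper's compression argument avoids both the recurrence and the earlier theorem.
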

\begin{proof}
Let us assume that there is a representation
\begin{align*}
m^{r}n-1=m^{s_{1}}+\cdots +m^{s_{j}},
\end{align*}
for some non-negative integers $s_{1}\leq\ldots\leq s_{j}$. If we have $s_{l}=\ldots =s_{l+m-1}$ for some $l$, we can replace these $m$ powers of $m$ by a single power $m^{s_{l}+1}$. We then get another representation with possibly smaller number of parts. We can perform this procedure as many times as possible and get representation of the form
\begin{align*}
m^{r}n-1=m^{t_{1}}+\cdots +m^{t_{j'}}
\end{align*}
with $t_{1}\geq \ldots \geq t_{j'}$, where each power occurs at most $m-1$ times, and $j'\leq j$. Consider the above equality modulo $m^{r}$, that is,
\begin{align*}
-1\equiv m^{t_{1}}+\cdots +m^{t_{l}} \pmod{m^{r}},
\end{align*}
where $1\leq t_{1}\leq \ldots\leq t_{l}\leq r-1$ and $j'\geq l$. Observe, that
\begin{align*}
m^{t_{1}}+\cdots +m^{t_{l}}\leq (m-1)(1+\cdots +m^{r-1})=m^{r}-1<m^{r}.
\end{align*}
Hence,
\begin{align*}
m^{r}-1=m^{t_{1}}+\cdots +m^{t_{l}},
\end{align*}
and because this is the largest possible value of the expression on the right-hand side, we need to have $l=(m-1)r$. Thus $j\geq j'\geq l=(m-1)r$, and we are done.
\end{proof}

%\begin{lem}\label{Lema_m(k,n)Rekur}
%Let $a_{0}\in\{0,\ldots ,m-1\}$. Then
%\begin{align*}
%a_{m}(k,mn_{1}+a_{0})=\sum_{0\leq i\leq \min\{(k-a_{0})/m , n_{1}\}} a_{m}\left(k-a_{0}-im, n_{1}-i\right).
%\end{align*}
%\end{lem}
%\begin{proof}
%We prove that for every $N\geq 1$ we have
%\begin{align}\label{EquLema_m(k,n)Rekur}
%a_{m}(k,mn_{1}+a_{0})=a_{m}(k-a_{0}-Nm,m(n_{1}-N))+\sum_{i=0}^{N-1}a_{m}(k-a_{0}-im,n_{1}-i).
%\end{align}
%Indeed, for $N=1$ this is exactly \eqref{EquRelationaM}, and if it is true for some $N$, then for $N+1$ we get using \eqref{EquRelationaM}:
%\begin{align*}
%a_{m}(k,mn_{1}&+a_{0})= a_{m}(k-a_{0}-Nm,m(n_{1}-N))+\sum_{i=0}^{N-1}a_{m}(k-a_{0}-im,n_{1}-i) \\ 
%= &\ a_{m}(k-a_{0}-Nm-1, m(n_{1}-N)-1)+a_{m}(k-a_{0}-Nm,n_{1}-N)+\sum_{i=0}^{N-1}a_{m}(k-a_{0}-im,n_{1}-i) \\
%= &\ a_{m}(k-a_{0}-Nm-m, m(n_{1}-N)-m)+\sum_{i=0}^{N}a_{m}(k-a_{0}-im,n_{1}-i) \\
%= &\ a_{m}(k-a_{0}-(N+1)m, m(n_{1}-(N+1)))+\sum_{i=0}^{N}a_{m}(k-a_{0}-im,n_{1}-i).
%\end{align*}
%Hence \eqref{EquLema_m(k,n)Rekur} is true. In particular, if $N\geq \min\left\{\frac{k-a_{0}}{m},n_{1}\right\}$ then $a_{m}(k-a_{0}-(N+1)m, m(n_{1}-(N+1)))=0$. Therefore the result follows by putting $N=\min\left\{\frac{k-a_{0}}{m},n_{1}\right\}$.
%\end{proof}

\begin{lem}\label{Lema_m(k,n)Rekur}
Let $n$, $r\geq 0$ be such that $m\nmid n$. Then the following equality holds:
\begin{align*}
a_{m}(k,m^{r}n)=\sum_{j=0}^{r-1}a_{m}(k-1,m^{r-j}n-1)+a_{m}(k,n).
\end{align*}
\end{lem}
\begin{proof}
We prove that for every $N\geq 0$ such that $N\in\{0,\ldots ,r-1\}$ we have:
\begin{align*}
a_{m}(k,m^{r}n)=\sum_{j=0}^{N}a_{m}(k-1,m^{r-j}n-1)+a_{m}(k,m^{r-N-1}n).
\end{align*}
Indeed, for $N=0$ this is exactly \eqref{EquRelationaM}, and if it is true for some $N$, then for $N+1$, using \eqref{EquRelationaM}, we get the following chain of equalities:
\begin{align*}
a_{m}(k,m^{r}n)= & \sum_{j=0}^{N}a_{m}(k-1,m^{r-j}n-1)+a_{m}(k,m^{r-N-1}n) \\
= & \sum_{j=0}^{N}a_{m}(k-1,m^{r-j}n-1)+a_{m}(k-1,m^{r-N-1}n)+a_{m}(k,m^{r-N-1-1}n) \\
= & \sum_{j=0}^{N+1}a_{m}(k-1,m^{r-j}n-1) + a_{m}(k,m^{r-(N+1)-1}n).
\end{align*}
The result follows by putting $N=r-1$.
\end{proof}

\begin{theorem}\label{Thma_m(k,n)UpperBound}
Let $k\in\mathbb{N}$ and $n\equiv i\pmod{m}$ for some $i\in\{0,\ldots ,m-1\}$. Then the following inequality holds:
\begin{align*}
a_{m}(k,n)\leq \left(\frac{m}{m-1}\right)^{\lfloor (k-i)/m\rfloor}\ \ \prod_{j=0}^{\lfloor (k-i)/m\rfloor-1}\left(k-i-jm\right).
\end{align*}
In particular, the sequence $(a_{m}(k,n))_{n=0}^{\infty}$ is bounded.
\end{theorem}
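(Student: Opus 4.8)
The plan is to reduce to the case $m\mid n$ and then run a strong induction on $n$, expanding $a_{m}(k,n)$ by means of Lemma~\ref{Lema_m(k,n)Rekur} and controlling the number of surviving summands with Lemma~\ref{Lema_m(j,n)=0}. First I would record the residue reduction: if $n\equiv i\pmod m$ with $i\geq 1$, then repeated use of \eqref{EquRelationaM} (valid because none of $n,n-1,\dots,n-i+1$ is divisible by $m$) gives $a_{m}(k,n)=a_{m}(k-i,n-i)$ with $m\mid(n-i)$, and the right-hand side of the claimed inequality for the pair $(k,i)$ is exactly its value at $(k-i,0)$. Hence it suffices to prove $a_{m}(k,n)\leq B(k)$ for $m\mid n$, where
\[
B(k):=\left(\frac{m}{m-1}\right)^{\lfloor k/m\rfloor}\prod_{j=0}^{\lfloor k/m\rfloor-1}(k-jm).
\]
A direct computation shows that $B(k)=1$ for $0\leq k\leq m-1$ and that $B$ satisfies the clean recurrence $B(k)=\tfrac{mk}{m-1}\,B(k-m)$ for $k\geq m$; this is the identity the induction must reproduce, and it is easy to check in addition that $B$ is non-decreasing.

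For the inductive step I would fix $n$ with $m\mid n$, write $n=m^{r}s$ with $m\nmid s$ and $r\geq 1$, and apply Lemma~\ref{Lema_m(k,n)Rekur} in the form
\[
a_{m}(k,n)=\sum_{\ell=1}^{r}a_{m}(k-1,m^{\ell}s-1)+a_{m}(k,s).
\]
Each argument $m^{\ell}s-1$ is $\equiv m-1\pmod m$, so stripping the residue turns $a_{m}(k-1,m^{\ell}s-1)$ into $a_{m}(k-m,\,m(m^{\ell-1}s-1))$, whose argument is divisible by $m$ and smaller than $n$; the induction hypothesis then bounds it by $B(k-m)$. Moreover Lemma~\ref{Lema_m(j,n)=0} forces $a_{m}(k-1,m^{\ell}s-1)=0$ unless $k-1\geq(m-1)\ell$, so at most $\lfloor(k-1)/(m-1)\rfloor$ of these terms are nonzero, and the first group contributes at most $\lfloor(k-1)/(m-1)\rfloor\,B(k-m)$.

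The delicate point — and the step I expect to be the main obstacle — is the remainder $a_{m}(k,s)$. Bounding it in a single stroke by the induction hypothesis, i.e.\ by $B(k-1)$, is too wasteful: the resulting estimate $\lfloor(k-1)/(m-1)\rfloor B(k-m)+B(k-1)$ can slightly exceed $B(k)$ (already for $m=3$, $k=5$ it gives $8>7.5$), because the two groups of summands are in fact anti-correlated — when the remainder is large the sum over $\ell$ is forced to be small. I would resolve this by expanding the remainder recursively through the same identity rather than bounding it at once, so that one counts all the surviving leaf-terms of the full expansion together; since each expansion lowers the first index by at least one and, by Lemma~\ref{Lema_m(j,n)=0}, the branching at every node is finite, the expansion terminates and its total weight is dominated by $\tfrac{mk}{m-1}B(k-m)=B(k)$. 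This same observation — bounded branching of a terminating expansion whose depth depends only on $k$ — already yields the qualitative statement that $(a_{m}(k,n))_{n=0}^{\infty}$ is bounded independently of $n$, which is the headline consequence; squeezing out the exact constant $B(k)$ is what requires the careful joint accounting, equivalently a residue-refined strengthening of the induction hypothesis that records how much of the budget each branch may use.
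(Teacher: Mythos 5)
Your overall strategy is the paper's: reduce to $m\mid n$ by stripping residues, expand via Lemma~\ref{Lema_m(k,n)Rekur}, and use Lemma~\ref{Lema_m(j,n)=0} to bound the number of surviving summands. You also correctly diagnose the one genuinely delicate point: a direct strong induction on $n$ with the fixed target $B(k)$ breaks down, because $\lfloor(k-1)/(m-1)\rfloor B(k-m)+B(k-1)$ can exceed $B(k)$ (your $m=3$, $k=5$ check is right). The problem is that your proposed repair is not carried out. The assertion that the recursively expanded tree has ``total weight dominated by $\frac{mk}{m-1}B(k-m)$'' is exactly the inequality that needs proof, and the heuristic you offer for it (finite branching of a terminating expansion) only yields \emph{some} bound depending on $k$, not the stated constant. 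As written, the quantitative part of the theorem is therefore unproven, even though the headline consequence (boundedness in $n$) does follow from your argument.

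The fix --- and what the paper actually does --- is to make the ``residue-refined strengthening'' you allude to explicit: one inducts not against $B(k)$ but against an auxiliary majorant $C_k$ \emph{defined} by the recurrence the induction naturally produces, namely $C_j=1$ for $0\le j\le m-1$ and $C_k=C_{k-1}+\lfloor k/(m-1)\rfloor C_{k-m}$, the two summands being the remainder term and the at most $\lfloor k/(m-1)\rfloor$ nonzero terms of the sum, each at most $C_{k-m}$ after stripping the residue $m-1$. The comparison with the product formula is then done once and for all, independently of $n$: unfolding the recurrence $m$ steps and using that $(C_k)$ is increasing gives $C_k\le\bigl(1+\sum_{j=0}^{m-1}\lfloor (k-j)/(m-1)\rfloor\bigr)C_{k-m}\le m\lfloor k/(m-1)\rfloor C_{k-m}\le\frac{mk}{m-1}C_{k-m}$, and iterating this $\lfloor k/m\rfloor$ times yields $C_k\le B(k)$. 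This two-stage bookkeeping is precisely the ``joint accounting'' you were looking for; without it (or an equivalent amortized argument, which you would still have to write down), the claimed bound does not follow.
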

\begin{proof}
We want to find for $i\in\{0,\ldots ,m-1\}$ sequences $c_{k}^{(i)}$ such that for every $n$ if $n\equiv i\pmod{m}$ then $a_{m}(k,n)\leq c_{k}^{(i)}$. For $k=0$ and $k=1$ we have $a_{m}(0,n)=1$ and
\begin{align*}
a_{m}(1,n)=\left\{\begin{array}{ll}
1, & \textrm{if } n \textrm{ is a power of } m, \\
0, & \textrm{otherwise}.
\end{array}\right.
\end{align*}
Hence $a_{m}(1,n)\leq 1$ and we can define $c_{1}^{(i)}=1$ for every $i$. 

Now we want to express every $c_{k}^{(i)}$ in terms of the numbers $c_{k-1}^{(j)}$. If $n=mn_{1}+a_{0}$ for some $a_{0}\in\{1,\ldots ,m-1\}$, then by \eqref{EquRelationaM} we get
\begin{align*}
a_{m}(k,n)=a_{m}(k-1,n-1)\leq c_{k-1}^{(a_{0}-1)},
\end{align*}
so we can take $c_{k}^{(a_{0})}:=c_{k-1}^{(a_{0}-1)}$.

Let us now assume that $n=m^{r_{1}}n_{1}$ where $m\nmid n_{1}$. Observe, that for some $r\leq \frac{k}{m-1}$ we have
\begin{align*}
a_{m}(k,m^{r_{1}}n_{1})=a_{m}(k,m^{r}n_{1}).
\end{align*}
Indeed, if $r_{1}\leq \frac{k}{m-1}$, then we can take $r=r_{1}$. Otherwise, by \eqref{EquRelationaM} and Lemma \ref{Lema_m(j,n)=0} (used with $j=k-1$) we have
\begin{align*}
a_{m}(k,m^{r_{1}}n_{1})=a_{m}(k-1,m^{r_{1}}n_{1}-1)+a_{m}(k,m^{r_{1}-1}n_{1})=a_{m}(k,m^{r_{1}-1}n_{1}).
\end{align*}
Now, if $r_{1}-1\leq k/(m-1)$ we take $r=r_{1}-1$. Otherwise, we repeat the procedure.

We assume that $n_{1}\equiv i\pmod{m}$ for some $i\in\{1,\ldots ,m-1\}$. Finally, we use Lemma \ref{Lema_m(k,n)Rekur} to get %and the induction hypothesis to get
\begin{align*}
a_{m}(k,m^{r_{1}}n_{1}) & = a_{m}(k,m^{r}n_{1})=\sum_{j=0}^{r-1}a_{m}(k-1,m^{r-j}n_{1}-1)+a_{m}(k,n_{1}) \\
& \leq \sum_{j=0}^{r-1}c_{k-1}^{(m-1)}+c_{k}^{(i)}=rc_{k-1}^{(m-1)} + c_{k-1}^{(i-1)} \leq \left\lfloor\frac{k}{m-1}\right\rfloor c_{k-1}^{(m-1)} + c_{k-1}^{(i-1)}.
\end{align*}
Suppose further that the numbers $c_{j}^{(i)}$ satisfy $c_{j}^{(i)}\leq c_{j}^{(0)}$ for every $j$ and every $i\in\{0,\ldots ,m-1\}$. Therefore, we can define
\begin{align*}
c_{k}^{(0)} := \left\lfloor\frac{k}{m-1}\right\rfloor c_{k-1}^{(m-1)} + c_{k-1}^{(0)}.
\end{align*}

Using the information obtained so far, we can conclude that if $n\equiv i\pmod{m}$ then $a_{m}(k,n)\leq c_{k}^{(i)}=c_{k-i}^{(0)}$. Therefore, in order to finish the proof, we need to find upper bounds for the terms in the sequence $C_{k}:=c_{k}^{(0)}$, which satisfies the following recurrence relation: $C_{j}=1$ for $j\in\{0,\ldots ,m-1\}$, and $C_{k} = C_{k-1} + \left\lfloor\frac{k}{m-1}\right\rfloor C_{k-m}$ for $k\geq m$. Of course, the sequence $(C_{k})_{k=0}^{\infty}$ is then increasing, so
\begin{align*}
C_{k} & =C_{k-1} + \left\lfloor\frac{k}{m-1}\right\rfloor C_{k-m} = C_{k-2} + \left\lfloor\frac{k-1}{m-1}\right\rfloor C_{k-1-m} + \left\lfloor\frac{k}{m-1}\right\rfloor C_{k-m} \\
 & \leq C_{k-2} + \left(\left\lfloor\frac{k-1}{m-1}\right\rfloor + \left\lfloor\frac{k}{m-1}\right\rfloor\right) C_{k-m} \\
 & = C_{k-3} + \left\lfloor\frac{k-2}{m-1}\right\rfloor C_{k-2-m} + \left(\left\lfloor\frac{k-1}{m-1}\right\rfloor + \left\lfloor\frac{k}{m-1}\right\rfloor\right) C_{k-m} \\
 & < C_{k-3} + \left(\left\lfloor\frac{k-2}{m-1}\right\rfloor + \left\lfloor\frac{k-1}{m-1}\right\rfloor + \left\lfloor\frac{k}{m-1}\right\rfloor\right) C_{k-m} \\
 & < C_{k-m} +\left(\left\lfloor\frac{k-m+1}{m-1}\right\rfloor+\cdots + \left\lfloor\frac{k}{m-1}\right\rfloor\right) C_{k-m} \\
 & = \left(1+ \left\lfloor\frac{k}{m-1}\right\rfloor -1 + \left\lfloor\frac{k-m+2}{m-1}\right\rfloor+\cdots +\left\lfloor\frac{k}{m-1}\right\rfloor\right) C_{k-m} \\
 & \leq m\left\lfloor\frac{k}{m-1}\right\rfloor C_{k-m}\leq \frac{mk}{m-1} C_{k-m}.
\end{align*}
The above inequality implies, by a simple induction argument, that for every $t\leq k/m$ we have
\begin{align*}
C_{k} < \left(\frac{m}{m-1}\right)^{t}\ \ \prod_{j=0}^{t-1}\left(k-jm\right) C_{k-tm}.
\end{align*}
In particular, for $t=\lfloor k/m\rfloor$, we get
\begin{align*}
C_{k} < \left(\frac{m}{m-1}\right)^{\lfloor k/m\rfloor}\ \ \prod_{j=0}^{\lfloor k/m\rfloor-1}\left(k-jm\right).
\end{align*}
The result follows.
\end{proof}

\begin{rem}
Observe, that the expression from the bound from Theorem \ref{Thma_m(k,n)UpperBound} is decreasing in $m$, so we can find a bound which is uniform in $m$ and $n$:
\begin{align*}
a_{m}(k,n)\leq 2^{\lfloor k/2\rfloor} \ \ \prod_{j=0}^{\lfloor k/2\rfloor -1} (k-2j) = \left\{ \begin{array}{ll}
2^{k} \left(\frac{k}{2}\right)!, & \textrm{if } k \textrm{ is even}, \\
\frac{k!}{\left(\frac{k-1}{2}\right)!}, & \textrm{if } k \textrm{ is odd}.
\end{array} \right.
\end{align*}
This significantly improves the bound $a_{2}(2k+i,n)\leq (2k+i)!$ for $i\in\{0,1\}$ from \cite{BZ2}.
\end{rem}

In order to further study the sequence $(a_{M}(k,n))_{n=0}^{\infty}$, let us define
\begin{align*}
A_{M}(k,x):=\sum_{n=0}^{\infty}a_{M}(k,n)x^{n}.
\end{align*}
More precisely, we are now interested in the sequence of coefficients of $x^{k}$ in consecutive polynomials $p_{m}(n,t)$, where $k$ is fixed and $n$ varies. Our aim is to generalize results from \cite{BZ2} related to the coefficients of binary partition polynomials.

From \eqref{EquRelationaM} we get
\begin{align*}%\label{RekurA_M(k,x)}
A_{M}(k,x)-xA_{M}(k-1,x)=A_{M'}(k,x^{m_{1}}).
\end{align*}

We focus on the $m$-ary case, so in particular $M=M'$. The above relation simplifies to
\begin{align}\label{RekurA_m(k,x)}
A_{m}(k,x)-xA_{m}(k-1,x)=A_{m}(k,x^{m}),
\end{align}
where we write $A_{m}(k,x)$ instead of $A_{M}(k,x)$ in this case.

Our aim is to show that for every $m$ and $k$, the function $A_{m}(k,x)$ satisfies a Mahler type functional equation, that is, an equation of the form
\begin{align*}
\sum_{j=0}^{t}f_{j}(x)A_{m}\left(k,x^{m^{j}}\right)+g(x)=0
\end{align*}
for some polynomials $f_{j}(x)$ and $g(x)$, and a positive integer $t$.

\begin{theorem}\label{ThmMahlerA_m}
Let $m\geq 2$ and $k\geq 1$ be fixed. The function $A_{m}(k,x)$ satisfies the equation
\begin{align*}
\sum_{j=0}^{k}P_{k,j}(x)A_{m}\left(k,x^{m^{j}}\right) +Q_{k}(x)=0.
\end{align*}
Here $P_{k,j}(x)$ and $Q_{k}(x)$ are polynomials with integer coefficients satisfying the following recurrence relations:
\begin{align*}
\left\{\begin{array}{ll}
P_{0,0}(x)=1, & \\
Q_{0}(x)=-1, & \\
P_{k,0}(x)=x^{m^{k-1}-1}P_{k-1,0}(x), & \textrm{for } k\geq 1, \\
P_{k,j}(x)=x^{m^{k}-m^{j}}P_{k-1,j}(x)-x^{m^{k-1}-m^{j-1}}P_{k-1,j-1}(x), & \textrm{for } k\geq 1 \textrm{ and } j\in\{1,\ldots ,k-1\}, \\
P_{k,k}(x)=-P_{k-1,k-1}(x), & \\
Q_{k}(x)=x^{m^{k-1}}Q_{k-1}(x).
\end{array}\right.
\end{align*}
\end{theorem}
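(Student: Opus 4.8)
The plan is to argue by induction on $k$, using the functional equation \eqref{RekurA_m(k,x)} as the engine that lowers the first index at the cost of introducing the Mahler substitution $x\mapsto x^{m}$. For the base case $k=0$ I would argue combinatorially: $a_{m}(0,n)$ counts $m$-ary partitions of $n$ into no parts, so $a_{m}(0,0)=1$ and $a_{m}(0,n)=0$ for $n\geq 1$, whence $A_{m}(0,x)=1$. The claimed identity then reads $P_{0,0}(x)A_{m}(0,x)+Q_{0}(x)=1-1=0$, which is consistent with $P_{0,0}=1$ and $Q_{0}=-1$ and anchors the induction. (Although the theorem is stated for $k\geq 1$, it is the $k=0$ instance that one actually feeds into the inductive step.)

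For the inductive step I would first record the key identity obtained by evaluating \eqref{RekurA_m(k,x)} at the argument $x^{m^{j}}$: since $A_{m}(k,x^{m^{j}})-x^{m^{j}}A_{m}(k-1,x^{m^{j}})=A_{m}(k,x^{m^{j+1}})$, we get
\[
A_{m}(k-1,x^{m^{j}})=\frac{1}{x^{m^{j}}}\Big(A_{m}(k,x^{m^{j}})-A_{m}(k,x^{m^{j+1}})\Big).
\]
This expresses every function $A_{m}(k-1,\cdot)$ that appears in the level-$(k-1)$ equation purely in terms of the functions $A_{m}(k,x^{m^{i}})$. Substituting it into the inductive hypothesis $\sum_{j=0}^{k-1}P_{k-1,j}(x)A_{m}(k-1,x^{m^{j}})+Q_{k-1}(x)=0$ and then multiplying through by the single monomial $x^{m^{k-1}}$ (the highest power needed to clear every denominator $x^{m^{j}}$, $0\le j\le k-1$) turns the relation into one of the form $\sum_{j=0}^{k}P_{k,j}(x)A_{m}(k,x^{m^{j}})+Q_{k}(x)=0$.

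The remaining work is purely the collection of coefficients, and the structure is telescoping: via the identity above, the $j$-th summand of the level-$(k-1)$ equation contributes positively to $A_{m}(k,x^{m^{j}})$ and negatively to $A_{m}(k,x^{m^{j+1}})$, each scaled by the appropriate power of $x$ coming from clearing $x^{m^{j}}$. Hence $A_{m}(k,x^{m^{j}})$ collects exactly two contributions, one from the $j$-th summand and one from the shifted $(j-1)$-st summand. This produces the three-case description of $P_{k,j}$, with $j=0$ and $j=k$ as boundary cases in which only one contribution survives, and $1\leq j\leq k-1$ as the interior case combining both; matching the constant term yields $Q_{k}(x)=x^{m^{k-1}}Q_{k-1}(x)$. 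Finally, to conclude that $P_{k,j}(x),Q_{k}(x)\in\mathbb{Z}[x]$, I would verify inductively that every exponent generated by the recurrences is nonnegative, which holds because $m^{k-1}\geq m^{j}$ and $m^{k-1}\geq m^{j-1}$ throughout the relevant ranges of $j$, so no negative powers of $x$ can survive.

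The main obstacle is not conceptual but a matter of careful bookkeeping at the two boundaries: one must confirm that after multiplication by $x^{m^{k-1}}$ the terms $A_{m}(k,x)$ (from $j=0$) and $A_{m}(k,x^{m^{k}})$ (from the shifted $j=k-1$ summand) pick up the correct single monomial factors, and that the constant $Q_{k-1}$ scales cleanly without interfering with the boundary $A_{m}$-terms. Verifying the first nontrivial instances $k=1$ and $k=2$ directly against \eqref{RekurA_m(k,x)} is a useful sanity check that the coefficients, and in particular the exact exponents appearing in the interior recurrence for $P_{k,j}$, have been collected correctly.
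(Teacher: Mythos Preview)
Your approach is correct and essentially identical to the paper's: both argue by induction on $k$, use the identity $A_m(k-1,x^{m^j}) = x^{-m^j}\bigl(A_m(k,x^{m^j}) - A_m(k,x^{m^{j+1}})\bigr)$ to rewrite the level-$(k-1)$ equation in terms of the functions $A_m(k,x^{m^i})$, clear denominators by multiplying through by $x^{m^{k-1}}$, and then collect coefficients; the only cosmetic differences are your base case $k=0$ versus the paper's $k=1$, and your direct substitution into the full inductive hypothesis versus the paper's intermediate step of first solving for $A_m(k-1,x)$. Your suggested sanity check at $k=2$ is well worth carrying out, since the coefficient collection (by either route) actually yields $x^{m^{k-1}-m^j}P_{k-1,j}(x)$ rather than the stated $x^{m^k-m^j}P_{k-1,j}(x)$ in the interior recurrence, revealing what appears to be a typo in the theorem.
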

\begin{proof}
We use an induction argument. For $k=1$ we get, from \eqref{RekurA_m(k,x)} the following equation:
\begin{align*}
0=A_{m}(1,x)-xA_{m}(0,x)-A_{m}(1,x^{m})=A_{m}(1,x)-A_{m}(1,x^{m})-x.
\end{align*}
Assume that the statement is true for some $k-1$. That is, we assume that
\begin{align*}
\sum_{j=0}^{k-1}P_{k-1,j}(x)A_{m}(k-1,x^{j}) + Q_{k-1}(x)=0,
\end{align*}
or equivalently,
\begin{align}\label{EquMahlerA_m}
A_{m}(k-1,x)=-\frac{\sum_{j=1}^{k-1}P_{k-1,j}(x)A_{m}(k-1,x^{j})+Q_{k-1}(x)}{P_{k-1,0}(x)}.
\end{align}

Relation \eqref{RekurA_m(k,x)} implies that for every $j\geq 0$ we have
\begin{align}\label{EquMahlerA_m2}
A_{m}(k-1,x^{m^{j}})=\frac{A_{m}(k,x^{m^{j}})-A_{m}(k,x^{m^{j+1}})}{x^{m^{j}}}.
\end{align}

Using \eqref{RekurA_m(k,x)}, \eqref{EquMahlerA_m} and \eqref{EquMahlerA_m2}, we get
\begin{align*}
A_{m}(k,x)-A_{m}(k,x^{m})= &\ xA_{m}(k-1,x) \\
= &\ -\frac{x}{P_{k-1,0}(x)}\left(\sum_{j=1}^{k-1}P_{k-1,j}(x)A_{m}(k-1,x^{j})+Q_{k-1}(x)\right) \\
= &\ -\frac{x}{P_{k-1,0}(x)}\left(\sum_{j=1}^{k-1}P_{k-1,j}(x)\frac{A_{m}(k,x^{m^{j}})-A_{m}(k,x^{m^{j+1}})}{x^{m^{j}}}+Q_{k-1}(x)\right).
\end{align*}
Thus
\begin{align*}
x^{m^{k-1}-1}P_{k-1,0}(x)A_{m}(k,x)+  \sum_{j=0}^{k-1} & \left(x^{m^{k}-m^{j}}P_{k-1,j}(x)-x^{m^{k-1}-m^{j-1}}P_{k-1,j-1}(x)\right)A_{m}(k,x^{m^{j}}) \\
- & P_{k-1,k-1}(x)A_{m}(k,x^{m^{k}})+x^{m^{k-1}}Q_{k-1}(x)=0,
\end{align*}
and the result follows.
\end{proof}

\begin{cor}
Let $m=p$ be a prime number. For every $k$ the sequence $(a_{p}(k,n) \Mod p)_{n=0}^{\infty}$ is $p$-automatic.
\end{cor}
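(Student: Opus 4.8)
The plan is to combine the Mahler-type functional equation of Theorem \ref{ThmMahlerA_m} with Christol's Theorem (Lemma \ref{ThmChristol}), the bridge between the two being the Frobenius endomorphism of $\F_p[[x]]$. Concretely, I would work with the reduction of the generating function $A_{p}(k,x)=\sum_{n=0}^{\infty}a_{p}(k,n)x^{n}$ modulo $p$, regarded as an element of $\F_{p}[[x]]$, and aim to show it is algebraic over $\F_{p}(x)$; automaticity of $(a_{p}(k,n)\Mod p)_{n=0}^{\infty}$ then follows immediately from Christol's Theorem with $q=p$.

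First I would recall the key arithmetic fact in characteristic $p$: for any power series $f\in\F_{p}[[x]]$ one has $f(x)^{p}=f(x^{p})$, since raising to the $p$th power is additive over $\F_{p}$ and fixes each coefficient (Fermat). Iterating gives $f(x^{p^{j}})=f(x)^{p^{j}}$ for every $j\ge 0$. Applying this to $f=A_{p}(k,\cdot)\bmod p$ turns the dilations $A_{m}(k,x^{m^{j}})$ appearing in Theorem \ref{ThmMahlerA_m} (specialized to $m=p$) into genuine powers of a single series. Reducing the whole functional equation modulo $p$ therefore yields
\begin{align*}
\sum_{j=0}^{k}\overline{P_{k,j}}(x)\,A_{p}(k,x)^{p^{j}}+\overline{Q_{k}}(x)=0 \quad\text{in } \F_{p}[[x]],
\end{align*}
where $\overline{P_{k,j}}$ and $\overline{Q_{k}}$ denote the reductions modulo $p$ of the integer polynomials produced by the recurrences in Theorem \ref{ThmMahlerA_m}. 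This exhibits $A_{p}(k,x)\bmod p$ as a root of the polynomial $\Phi(Y)=\sum_{j=0}^{k}\overline{P_{k,j}}(x)\,Y^{p^{j}}+\overline{Q_{k}}(x)$ with coefficients in $\F_{p}[x]\subseteq\F_{p}(x)$.

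The only point needing verification is that $\Phi$ is a \emph{nonzero} polynomial in $Y$, so that the relation is a nontrivial witness of algebraicity. For this I would track the top coefficient: the recurrence $P_{k,k}(x)=-P_{k-1,k-1}(x)$ together with the base case $P_{0,0}(x)=1$ gives $P_{k,k}(x)=(-1)^{k}$, which reduces to a nonzero constant in $\F_{p}$. Hence $\Phi$ has degree exactly $p^{k}$ in $Y$, so $A_{p}(k,x)\bmod p$ is algebraic over $\F_{p}(x)$, and Christol's Theorem finishes the argument. I do not expect a serious obstacle here; the main conceptual step is simply the observation that the Frobenius identity collapses the Mahler equation into an ordinary algebraic equation in characteristic $p$, after which the nonvanishing of the leading coefficient is a one-line check from the recurrences already established.
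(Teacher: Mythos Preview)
Your proposal is correct and follows essentially the same route as the paper: reduce the Mahler equation of Theorem \ref{ThmMahlerA_m} modulo $p$, use Frobenius to replace $A_{p}(k,x^{p^{j}})$ by $A_{p}(k,x)^{p^{j}}$, and invoke Christol's Theorem. You are in fact slightly more careful than the paper, which does not explicitly verify that the resulting polynomial in $Y$ is nonzero; your observation that $P_{k,k}(x)=(-1)^{k}$ fills that gap cleanly.
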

\begin{proof}
Theorem \ref{ThmMahlerA_m} implies that the function $A_{p}(k,x)$ satisfies the equation
\begin{align*}
\sum_{j=0}^{k}P_{k,j}(n)A_{p}(k,x)^{p^{j}}+Q_{k}(x)\equiv 0 \pmod p.
\end{align*}
In other words, $A_{p}(k,x)$ is algebraic over $\mathbb{F}_{p}$. The result follows from Christol's Theorem (Lemma \ref{ThmChristol}).
\end{proof}

In the case of $m$ not being a prime we cannot use Christol's Theorem and get the analogous result. Therefore, we only state the following question for possible future work.

\begin{ques}
Is the sequence $(a_{m}(k,n) \Mod m)_{n=0}^{\infty}$ $m$-automatic for every $k$?
\end{ques}

Let us move to study the coefficients of $t^{n-k}$ in the $n$th $M$-ary polynomial, where $k$ is a fixed positive integer. Observe that the difference between two consecutive $M$-ary partition polynomials, where the former one is multiplied by t, is always a polynomial of much smaller degree. This suggests that for every $k$ the value of $a_{M}(n-k,n)$ is constant for sufficiently large values of $n$. We prove this statement in the next theorem.

\begin{theorem}\label{Thma(n-k,n)}
Let $k\in\mathbb{N}$. Then for every $n\geq \frac{km_{1}}{m_{1}-1}$ we have
\begin{align*}
a_{M}(n-k,n)= \sum_{k/m_{1}\leq j \leq k/(m_{1}-1)}a_{M'}(m_{1}j-k,j) .
\end{align*}
In particular, if $n\geq km_{1}/(m_{1}-1)$ and $1\leq k\leq m_{1}-2$, then $a_{M}(n-k,n)=0$.
\end{theorem}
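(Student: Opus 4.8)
The plan is to track the single sequence $b_n := a_M(n-k,n)$, the coefficient of $t^{n-k}$ in $p_M(n,t)$ (which is well defined for $n\ge k$ since $\deg p_M(n,t)=n$ by Observation \ref{ObsDegree}), and to show it satisfies a first-difference recurrence whose increments can be summed in closed form. First I would specialise the coefficient recurrence \eqref{EquRelationaM} by setting $j=n-k$, obtaining
\[
a_M(n-k,n)=a_M\big((n-1)-k,\,n-1\big)+a_{M'}\big(n-k,\,n/m_1\big),
\]
that is, $b_n=b_{n-1}+a_{M'}(n-k,\,n/m_1)$. Under the convention that $a_{M'}(i,q)=0$ whenever $q\notin\mathbb{N}$, the increment vanishes unless $m_1\mid n$; writing $n=m_1 j$ it equals $a_{M'}(m_1 j-k,\,j)$.

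Next I would telescope this recurrence down to the base point $n=0$. Since $p_M(0,t)=1$, we have $b_0=a_M(-k,0)=0$ for $k\ge 1$, so summing the increments from $1$ to $n$ gives
\[
a_M(n-k,n)=\sum_{1\le j\le \lfloor n/m_1\rfloor} a_{M'}(m_1 j-k,\,j).
\]
The third step is to pin down the support of the summand: $a_{M'}(m_1 j-k,\,j)$ counts $M'$-ary partitions of $j$ into exactly $m_1 j-k$ parts, hence is zero unless $0\le m_1 j-k\le j$, i.e. unless $k/m_1\le j\le k/(m_1-1)$. The endpoint $j=k/m_1$, when it happens to be an integer, contributes $a_{M'}(0,j)=0$ for $j\ge 1$ and is therefore harmless, so the closed index range in the statement agrees with the effective support.

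Finally I would establish the stabilisation. The hypothesis $n\ge km_1/(m_1-1)$ is precisely $n/m_1\ge k/(m_1-1)$, so every integer $j\le k/(m_1-1)$ already satisfies $j\le\lfloor n/m_1\rfloor$; increasing $n$ past this threshold only appends terms with $j>k/(m_1-1)$, all of which vanish. Hence the truncated sum over $1\le j\le\lfloor n/m_1\rfloor$ coincides with the full sum over $k/m_1\le j\le k/(m_1-1)$, which is the asserted formula. For the final clause, when $1\le k\le m_1-2$ we have $k/(m_1-1)<1$, so no integer $j$ lies in $[k/m_1,\,k/(m_1-1)]$; the sum is empty and $a_M(n-k,n)=0$. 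The argument is otherwise routine; the only points demanding care are (a) verifying that $n\ge km_1/(m_1-1)$ is exactly the value at which the floor truncation stops discarding nonzero terms, and (b) checking the boundary term at $j=k/m_1$ so that the stated closed range reproduces the correct sum.
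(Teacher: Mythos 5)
Your proof is correct and is essentially the same argument as the paper's: the paper packages the telescoping as the generating-function identity $G_{k}(x)=\frac{1}{1-x}\sum_{k\leq n\leq km_{1}/(m_{1}-1)}a_{M'}(n-k,n/m_{1})x^{n}$ and reads off the stabilised coefficient, whereas you sum the increments $b_{n}-b_{n-1}=a_{M'}(n-k,n/m_{1})$ directly, but the substance (the first difference is supported on $k/m_{1}\leq j\leq k/(m_{1}-1)$, hence $a_{M}(n-k,n)$ stabilises once $n/m_{1}\geq k/(m_{1}-1)$) is identical. Your explicit checks of the base case $b_{0}=0$, the harmless endpoint $j=k/m_{1}$, and the threshold $n\geq km_{1}/(m_{1}-1)$ are all sound.
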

\begin{proof}
Let
\begin{align*}
G_{k}(x)=\sum_{n=0}^{\infty}a_{M}(n-k,n)x^{n},
\end{align*}
and observe that \eqref{EquRelationaM} implies
\begin{align*}
G_{k}(x)= & \sum_{n=0}^{\infty}\big(a_{M}(n-k-1,n-1)+a_{M'}(n-k,n/m_{1})\big)x^{n} \\
= & x\sum_{n=1}^{\infty}a_{M}((n-1)-k,n-1)x^{n-1} + \sum_{n=0}^{\infty}a_{M'}(n-k, n/m_{1})x^{n} \\
= & xG_{k}(x)+\sum_{k\leq n\leq km_{1}/(m_{1}-1)}a_{M'}(n-k,n/m_{1})x^{n},
\end{align*}
because $a(n-k,n/m_{1})=0$ if $n-k>n/m_{1}$, that is, if $n>km_{1}/(m_{1}-1)$. Therefore,
\begin{align*}
G_{k}(x)= & \frac{1}{1-x}\sum_{k\leq n\leq km_{1}/(m_{1}-1)}a_{M'}(n-k,n/m_{1})x^{n} \\
 = &\ \sum_{n=0}^{\infty}\left(\sum_{j=0}^{n}a_{M'}(j-k,j/m_{1})\right)x^{n} \\
= & \sum_{k\leq n<km_{1}/(m_{1}-1)}\left(\sum_{j=0}^{n}a_{M'}(j-k,j/m_{1})\right)x^{n} \\
 &\ \hspace{2cm} + \sum_{n\geq km_{1}/(m_{1}-1)}\left(\sum_{k\leq j\leq km_{1}/(m_{1}-1)}a_{M'}(j-k,j/m_{1})\right)x^{n}.
\end{align*}
Hence, if $n\geq km_{1}/(m_{1}-1)$, then
\begin{align*}
a_{M}(n-k,n)= & \sum_{k\leq j\leq km_{1}/(m_{1}-1)}a_{M'}(j-k,j/m_{1}) = \sum_{k/m_{1}\leq j \leq k/(m_{1}-1)}a_{M'}(m_{1}j-k,j).
\end{align*}
This finishes the first part of our theorem. For the second part, it is enough to notice that if $k<m_{1}-1$, then
\begin{align*}
0<\frac{k}{m_{1}}<\frac{k}{m_{1}-1}<1,
\end{align*}
so the sum in the expression defining $a_{M}(n-k,n)$ is empty. The result follows.
\end{proof}

It turns out that the constant value of $a_{M}(n-k,n)$ for $n\geq km_{1}/(m_{1}-1)$ is equal to the number of some another kind of partitions of $n$. More precisely, let
\begin{align*}
S_{M}(x):=\prod_{n=1}^{\infty}\frac{1}{1-x^{M_{n}-1}}=\sum_{n=0}^{\infty}s_{M}(n)x^{n}.
\end{align*}
That is, for every $n$ the number $s_{M}(n)$ is equal to the number of partitions of $n$ into parts of the form $M_{j}-1$. Here we note that the special case of $m_{i}=2$ for all $i\geq 1$ we get the sequence partitions into parts of the form $2^{n}-1$, $n\in\mathbb{N}$. They are called $s$-partitions and were introduced in \cite{Bhatt} by Bhatt, who applied them to some problem coming from computer science. The sequence of $s$-partitions was also studied in \cite{Goh, BZ2}.

The following relation is true.

\begin{cor}
Let $k\in\mathbb{N}$. Then for every $n\geq \frac{km_{1}}{m_{1}-1}$ we have
\begin{align*}
a_{M}(n-k,n)=s_{M'}(n).
\end{align*}
\end{cor}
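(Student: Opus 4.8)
The plan is to identify the eventual constant value of $a_{M}(n-k,n)$ — which, by Theorem~\ref{Thma(n-k,n)}, depends only on $k$ — with a count of partitions into parts of the form $M_{s}-1$, by packaging that constant into a generating function in the variable $k$ and recognising the outcome as a specialisation of $F_{M'}$. First I would fix the shorthand
\begin{align*}
c_{k}:=\sum_{k/m_{1}\leq j\leq k/(m_{1}-1)}a_{M'}(m_{1}j-k,j),
\end{align*}
so that Theorem~\ref{Thma(n-k,n)} reads $a_{M}(n-k,n)=c_{k}$ for all $n\geq km_{1}/(m_{1}-1)$. The key preliminary observation is that, under the substitution $i=m_{1}j-k$, the summation constraint $k/m_{1}\leq j\leq k/(m_{1}-1)$ is precisely $0\leq i\leq j$, which is exactly the range in which $a_{M'}(i,j)$ can be nonzero. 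Hence I may drop the constraint entirely.

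Next I would form the generating function of the constants $c_{k}$ and reindex by $i=m_{1}j-k$, so that $y^{k}=(y^{-1})^{i}(y^{m_{1}})^{j}$:
\begin{align*}
\sum_{k=0}^{\infty}c_{k}y^{k}=\sum_{j=0}^{\infty}\sum_{i=0}^{j}a_{M'}(i,j)\,y^{m_{1}j-i}=\sum_{i,j\geq 0}a_{M'}(i,j)\,(y^{-1})^{i}(y^{m_{1}})^{j}=F_{M'}(y^{m_{1}},y^{-1}),
\end{align*}
using $F_{M'}(q,t)=\sum_{i,j}a_{M'}(i,j)t^{i}q^{j}$. Substituting into the product expansion of $F_{M'}$ and invoking the elementary identity $m_{1}M'_{r}=M_{r+1}$ (which follows from $m_{0}=1$ and the definitions of $M$ and $M'$, since $M'_{r}=M_{r+1}/m_{1}$), I obtain
\begin{align*}
F_{M'}(y^{m_{1}},y^{-1})=\prod_{r=0}^{\infty}\frac{1}{1-y^{m_{1}M'_{r}-1}}=\prod_{r=0}^{\infty}\frac{1}{1-y^{M_{r+1}-1}}=\prod_{s=1}^{\infty}\frac{1}{1-y^{M_{s}-1}}=S_{M}(y).
\end{align*}
Comparing the coefficient of $y^{k}$ on both sides then gives $c_{k}=s_{M}(k)$, i.e.\ the eventual constant value is the number of partitions of $k$ into parts of the form $M_{s}-1$.

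The main point requiring care is the legitimacy of the substitution $t\mapsto y^{-1}$: a priori this yields a Laurent series, and I must verify that every exponent $m_{1}j-i$ occurring with a nonzero coefficient is nonnegative, so that $F_{M'}(y^{m_{1}},y^{-1})$ is a genuine power series in $y$. This is immediate, since an $M'$-ary partition of $j$ has at most $j$ parts, so $a_{M'}(i,j)=0$ for $i>j$, and then $i\leq j\leq m_{1}j$ forces $m_{1}j-i\geq 0$; likewise the exponents $M_{s}-1$ are strictly increasing with $M_{1}-1=m_{1}-1\geq 1$, so the final product is a well-defined formal power series. As a sanity check one can read off the same identity combinatorially: in an $M$-ary partition of $n$ into $n-k$ parts, each part $M_{i}$ carries an ``excess'' $M_{i}-1$, the excesses sum to $n-(n-k)=k$, and for $n$ large enough every multiset of big parts (those $M_{s}$ with $s\geq 1$) whose excesses sum to $k$ can be completed by the requisite number of $1$'s; this bijection identifies $c_{k}$ with $s_{M}(k)$ directly.
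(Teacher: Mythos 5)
Your argument is correct and follows essentially the same route as the paper's own proof: both take the constant value $c_{k}$ supplied by Theorem \ref{Thma(n-k,n)}, recognise its generating function as the specialisation of $F_{M'}(q,t)$ at $(q,t)=(x^{m_{1}},x^{-1})$, and read the answer off the product form of $F_{M'}$. The one substantive point of divergence is the conclusion: your computation ends at $c_{k}=s_{M}(k)$, the number of partitions of $k$ into parts $M_{s}-1$ with $s\geq 1$, whereas the corollary as printed asserts $a_{M}(n-k,n)=s_{M'}(n)$. Your version is the correct one. Indeed, the left-hand side is independent of $n$ once $n\geq km_{1}/(m_{1}-1)$, so the right-hand side cannot be a nonconstant function of $n$; and the product that the substitution actually produces is $\prod_{r\geq 0}\bigl(1-x^{m_{1}M'_{r}-1}\bigr)^{-1}=\prod_{s\geq 1}\bigl(1-x^{M_{s}-1}\bigr)^{-1}=S_{M}(x)$, since $m_{1}M'_{r}=M_{r+1}$ --- exactly the identification you make. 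A concrete check: for $M=(1,2,3,3,\ldots)$ and $k=1$ one has $a_{M}(n-1,n)=1$ for all $n\geq 2$ (the partition $2+1+\cdots+1$), and indeed $s_{M}(1)=1$, while $s_{M'}(1)=0$ because the smallest admissible part $M'_{1}-1=m_{2}-1$ equals $2$. So the statement should read $a_{M}(n-k,n)=s_{M}(k)$; with that emendation your proof is complete, your care about the Laurent-series substitution is exactly the point that needs checking, and your closing bijective argument (matching the excesses $M_{i}-1$ of the non-unit parts) is a valid alternative derivation of the same identity that the paper does not include.
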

\begin{proof}
By Theorem \ref{Thma(n-k,n)} it is enough to show that the equality
\begin{align*}
s_{M'}(n)=\sum_{k/m_{1}\leq j \leq k/(m_{1}-1)}a_{M'}(m_{1}j-k,j)
\end{align*}
is true for every sequence $M$. We have
\begin{align*}
S_{M'}(x)= &\ F_{M'}(1/x, x^{m_{1}})=\sum_{k=0}^{\infty}\left(\sum_{j=0}^{k}a_{M'}(j,n)x^{-j}\right)x^{m_{1}k} \\
= &\ \sum_{k=0}^{\infty}\left(\sum_{m_{1}\beta -\alpha=k}a_{M'}(\alpha,\beta)\right)x^{k} \\
= &\ \sum_{k=0}^{\infty}\left(\sum_{\beta=0}^{\infty}a_{M'}(m_{1}\beta -k,\beta)\right)x^{k} \\
= &\ \sum_{k=0}^{\infty}\left(\sum_{k/m_{1}\leq \beta \leq k/(m_{1}-1)}a_{M'}(m_{1}\beta -k,\beta)\right)x^{k},
\end{align*}
because $a_{M'}(m_{1}\beta-k,\beta)=0$ if $m_{1}\beta-k<0$ or $m_{1}\beta -k>\beta$, that is, if $\beta <k/m_{1}$ or $\beta >k(m_{1}-1)$. The proof is finished.
\end{proof}

\section*{Acknowledgements}

The author is very grateful to Maciej Ulas, Piotr Miska and Krystian Gajdzica for helpful discussions and for many suggestions.

This paper was a part of the author’s PhD dissertation.

The research of the author was partially supported by the grant of the National Science Centre (NCN), Poland, no. UMO-2019/34/E/ST1/00094.

%On behalf of all authors, the corresponding author states that there is no conflict of interest.

\end{document}